\documentclass{article}


\usepackage[nonatbib,final]{neurips_2021}




\usepackage[utf8]{inputenc} 
\usepackage[T1]{fontenc}    
\usepackage{amssymb,amsmath,amsthm,amsfonts}
\usepackage{bm}
\usepackage{textgreek}
\usepackage{mathtools}
\usepackage{enumitem}
\usepackage[font=small]{caption}
\usepackage[labelformat=simple]{subcaption}
\usepackage{float}
\usepackage[ruled,vlined,linesnumbered]{algorithm2e}
\usepackage{physics}
\usepackage{footnote}
\usepackage{xcolor}
\usepackage{mathrsfs}
\usepackage{bbm}
\usepackage[colorlinks]{hyperref}
\usepackage{cleveref}
\usepackage{thm-restate}

\newtheorem{theorem}{Theorem}
\newtheorem{definition}[theorem]{Definition}
\newtheorem{lemma}[theorem]{Lemma}
\newtheorem{proposition}[theorem]{Proposition}

\newtheorem{remark}[theorem]{Remark}
\newtheorem{assumption}{Assumption}

\newcommand{\eqn}[1]{(\ref{eqn:#1})}

\newcommand{\thm}[1]{\hyperref[thm:#1]{Theorem~\ref*{thm:#1}}}
\newcommand{\cor}[1]{\hyperref[cor:#1]{Corollary~\ref*{cor:#1}}}
\newcommand{\defn}[1]{\hyperref[defn:#1]{Definition~\ref*{defn:#1}}}
\newcommand{\lem}[1]{\hyperref[lem:#1]{Lemma~\ref*{lem:#1}}}
\newcommand{\prop}[1]{\hyperref[prop:#1]{Proposition~\ref*{prop:#1}}}
\newcommand{\assum}[1]{\hyperref[assum:#1]{Assumption~\ref*{assum:#1}}}
\newcommand{\fig}[1]{\hyperref[fig:#1]{Figure~\ref*{fig:#1}}}
\newcommand{\tab}[1]{\hyperref[tab:#1]{Table~\ref*{tab:#1}}}
\newcommand{\algo}[1]{\hyperref[algo:#1]{Algorithm~\ref*{algo:#1}}}
\renewcommand{\sec}[1]{\hyperref[sec:#1]{Section~\ref*{sec:#1}}}
\newcommand{\append}[1]{\hyperref[append:#1]{Appendix~\ref*{append:#1}}}
\newcommand{\fac}[1]{\hyperref[fac:#1]{Fact~\ref*{fac:#1}}}
\newcommand{\lin}[1]{\hyperref[lin:#1]{Line~\ref*{lin:#1}}}
\newcommand{\fnote}[1]{\hyperref[fnote:#1]{Footnote~\ref*{fnote:#1}}}

\def\>{\rangle}
\def\<{\langle}

\newcommand{\vect}[1]{\ensuremath{\mathbf{#1}}}
\newcommand{\x}{\ensuremath{\mathbf{x}}}

\newcommand{\R}{\mathbb{R}}

\DeclareMathOperator{\poly}{poly}

\renewcommand{\d}{\mathrm{d}}


\title{Escape saddle points by a simple gradient-descent based algorithm}

\author{
  \textbf{Chenyi Zhang}$^{1}$\qquad \textbf{Tongyang Li}$^{2,3,4}$\thanks{Corresponding author. Email: tongyangli@pku.edu.cn} \\
  $^1$ Institute for Interdisciplinary Information Sciences, Tsinghua University, China \\
  $^2$ Center on Frontiers of Computing Studies, Peking University, China \\
  $^3$ School of Computer Science, Peking University, China \\
  $^4$ Center for Theoretical Physics, Massachusetts Institute of Technology, USA
}

\begin{document}

\maketitle

\begin{abstract}
Escaping saddle points is a central research topic in nonconvex optimization. In this paper, we propose a simple gradient-based algorithm such that for a smooth function $f\colon\mathbb{R}^n\to\mathbb{R}$, it outputs an $\epsilon$-approximate second-order stationary point in $\tilde{O}(\log n/\epsilon^{1.75})$ iterations. Compared to the previous state-of-the-art algorithms by Jin et al. with $\tilde{O}(\log^4 n/\epsilon^{2})$ or $\tilde{O}(\log^6 n/\epsilon^{1.75})$ iterations, our algorithm is polynomially better in terms of $\log n$ and matches their complexities in terms of $1/\epsilon$. For the stochastic setting, our algorithm outputs an $\epsilon$-approximate second-order stationary point in $\tilde{O}(\log^{2} n/\epsilon^{4})$ iterations. Technically, our main contribution is an idea of implementing a robust Hessian power method using only gradients, which can find negative curvature near saddle points and achieve the polynomial speedup in $\log n$ compared to the perturbed gradient descent methods. Finally, we also perform numerical experiments that support our results.
 \end{abstract}


\setcounter{footnote}{0} 

\section{Introduction}
Nonconvex optimization is a central research area in optimization theory, since lots of modern machine learning problems can be formulated in models with nonconvex loss functions, including deep neural networks, principal component analysis, tensor decomposition, etc. In general, finding a global minimum of a nonconvex function is NP-hard in the worst case. Instead, many theoretical works focus on finding a local minimum instead of a global one, because recent works (both empirical and theoretical) suggested that local minima are nearly as good as global minima for a significant amount of well-studied machine learning problems; see e.g.~\cite{bhojanapalli2016global,ge2015escaping,ge2016matrix,ge2018learning,ge2017optimization,hardt2018gradient}. On the other hand, saddle points are major obstacles for solving these problems, not only because they are ubiquitous in high-dimensional settings where the directions for escaping may be few (see e.g.~\cite{bray2007statistics,dauphin2014identifying,fyodorov2007replica}), but also saddle points can correspond to highly suboptimal solutions (see e.g.~\cite{jain2017global,sun2018geometric}).

Hence, one of the most important topics in nonconvex optimization is to \emph{escape saddle points}. Specifically, we consider a twice-differentiable function $f\colon\mathbb{R}^n\to\mathbb{R}$ such that
\vspace{-1mm}
\begin{itemize}
\item $f$ is $\ell$-smooth: $\|\nabla f(\vect{x}_1)-\nabla f(\vect{x}_2)\|\leq\ell\|\vect{x}_1-\vect{x}_2\|\quad\forall \vect{x}_1,\vect{x}_2\in\mathbb{R}^n$,
    \vspace{-0.5mm}
\item $f$ is $\rho$-Hessian Lipschitz: $\|\mathcal{H}(\vect{x}_1)-\mathcal{H}(\vect{x}_2)\|\leq\rho\|\vect{x}_1-\vect{x}_2\|\quad\forall\vect{x}_1,\vect{x}_2\in\mathbb{R}^n$;
\end{itemize}
\vspace{-1mm}
here $\mathcal{H}$ is the Hessian of $f$. The goal is to find an $\epsilon$-approximate second-order stationary point $\vect{x}_{\epsilon}$:\footnote{We can ask for an $(\epsilon_{1},\epsilon_{2})$-approx.~second-order stationary point s.t. $\|\nabla f(\x)\|\leq\epsilon_{1}$ and $\lambda_{\min}(\nabla^{2}f(\x))\geq-\epsilon_{2}$ in general. The scaling in \eqn{eps-approx-local-min} was adopted as a standard in literature~\cite{agarwal2017finding,carmon2018accelerated,fang2019sharp,jin2017escape,jin2019stochastic,jin2018accelerated,nesterov2006cubic,tripuraneni2018stochastic,xu2017neon,xu2018first}.}
\begin{align}\label{eqn:eps-approx-local-min}
\|\nabla f(\vect{x}_{\epsilon})\|\leq\epsilon,\quad\lambda_{\min}(\mathcal{H}(\vect{x}_{\epsilon}))\geq-\sqrt{\rho\epsilon}.
\end{align}
In other words, at any $\epsilon$-approx. second-order stationary point $\x_{\epsilon}$, the gradient is small with norm being at most $\epsilon$ and the Hessian is close to be positive semi-definite with all its eigenvalues $\geq-\sqrt{\rho\epsilon}$.

Algorithms for escaping saddle points are mainly evaluated from two aspects. On the one hand, considering the enormous dimensions of machine learning models in practice, dimension-free or almost dimension-free (i.e., having $\poly(\log n)$ dependence) algorithms are highly preferred. On the other hand, recent empirical discoveries in machine learning suggests that it is often feasible to tackle difficult real-world problems using simple algorithms, which can be implemented and maintained more easily in practice. On the contrary, algorithms with nested loops often suffer from significant overheads in large scales, or introduce concerns with the setting of hyperparameters and numerical stability (see e.g.~\cite{agarwal2017finding,carmon2018accelerated}), making them relatively hard to find practical implementations. 

It is then natural to explore simple gradient-based algorithms for escaping from saddle points. The reason we do not assume access to Hessians is because its construction takes $\Omega(n^2)$ cost in general, which is computationally infeasible when the dimension is large. A seminal work along this line was by Ge et al.~\cite{ge2015escaping}, which found an $\epsilon$-approximate second-order stationary point satisfying \eqn{eps-approx-local-min} using only gradients in $O(\poly(n,1/\epsilon))$ iterations. This is later improved to be almost dimension-free $\tilde{O}(\log^{4} n/\epsilon^{2})$ in the follow-up work~\cite{jin2017escape},\footnote{The $\tilde{O}$ notation omits poly-logarithmic terms, i.e., $\tilde{O}(g)=O(g\poly(\log g))$.} and the perturbed accelerated gradient descent algorithm~\cite{jin2018accelerated} based on Nesterov's accelerated gradient descent~\cite{nesterov1983method} takes $\tilde{O}(\log^{6} n/\epsilon^{1.75})$ iterations. However, these results still suffer from a significant overhead in terms of $\log n$. On the other direction, Refs.~\cite{allen2018neon2,liu2018adaptive,xu2017neon} demonstrate that an $\epsilon$-approximate second-order stationary point can be find using gradients in $\tilde{O}(\log n/\epsilon^{1.75})$ iterations. Their results are based on previous works \cite{agarwal2017finding,carmon2018accelerated} using Hessian-vector products and the observation that the Hessian-vector product can be approximated via the difference of two gradient queries. Hence, their implementations contain nested-loop structures with relatively large numbers of hyperparameters. It has been an open question whether it is possible to keep both the merits of using only first-order information as well as being close to dimension-free using a simple, gradient-based algorithm without a nested-loop structure~\cite{jordan2017talk}. This paper answers this question in the affirmative.


\paragraph{Contributions.}
Our main contribution is a simple, single-loop, and robust gradient-based algorithm that can find an $\epsilon$-approximate second-order stationary point of a smooth, Hessian Lipschitz function $f\colon\mathbb{R}^n\to\mathbb{R}$. Compared to previous works \cite{allen2018neon2,liu2018adaptive,xu2017neon} exploiting the idea of gradient-based Hessian power method, our algorithm has a single-looped, simpler structure and better numerical stability. Compared to the previous state-of-the-art results with single-looped structures by~\cite{jin2018accelerated} and~\cite{jin2017escape,jin2019stochastic} using $\tilde{O}(\log^{6}n/\epsilon^{1.75})$ or $\tilde{O}(\log^{4}n/\epsilon^{2})$ iterations, our algorithm achieves a polynomial speedup in $\log n$: \begin{theorem}[informal]\label{thm:main-intro}
Our single-looped algorithm finds an $\epsilon$-approximate second-order stationary point in $\tilde{O}(\log n/\epsilon^{1.75})$ iterations.
\end{theorem}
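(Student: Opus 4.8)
The plan is to treat the function value $f$ (or, if accelerated steps are used, a potential combining $f$ with the momentum term) as a monotone-decreasing potential and to partition the run of the algorithm into two kinds of \emph{phases}: \emph{descent phases}, during which the iterates retain a non-negligible gradient and the gradient steps provably decrease the potential, and \emph{escape phases}, triggered whenever the gradient becomes small but the current point fails the Hessian condition in \eqn{eps-approx-local-min}. If at some iterate neither event occurs -- small gradient \emph{and} $\lambda_{\min}(\mathcal H)\ge-\sqrt{\rho\epsilon}$ -- then by definition that iterate is an $\epsilon$-approximate second-order stationary point and the algorithm halts; so correctness of the output is immediate from the stopping rule, and the whole argument reduces to bounding the number of iterations spent in descent phases and in escape phases.

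For the descent phases I would invoke the standard ``improve-or-localize'' analysis of (accelerated) gradient descent on an $\ell$-smooth, $\rho$-Hessian-Lipschitz function: as long as $\|\nabla f\|\ge\epsilon$, a block of $\tilde O(1)$ steps decreases the potential by $\tilde\Omega(\epsilon^{1.75})$ in the appropriate normalization, giving the aggregate $\epsilon^{-1.75}$ rate of the accelerated scheme. Hence the total number of descent-phase iterations is at most $\tilde O\big((f(\x_0)-f^\ast)/\epsilon^{1.75}\big)$, which carries no factor of $n$.

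The heart of the proof, and the place where a single $\log n$ (rather than $\log^4 n$ or $\log^6 n$) is won, is the escape phase. Near a point $\x_0$ with $\lambda_{\min}(\mathcal H(\x_0))\le-\sqrt{\rho\epsilon}$, I would show that iterating the gradient map on the shifted gradient $\vect g(\x):=\nabla f(\x)-\nabla f(\x_0)$ is, to leading order, a matrix iteration driven by $\mathcal H(\x_0)$: writing $\vect g(\x)=\mathcal H(\x_0)(\x-\x_0)+\vect e(\x)$ with $\|\vect e(\x)\|\le\tfrac{\rho}{2}\|\x-\x_0\|^2$ by Hessian-Lipschitzness, the trajectory tracks a power iteration applied to $I-\eta\mathcal H(\x_0)$, and -- crucially -- a Chebyshev-accelerated version of it, realized through the three-term recurrence for $\cheb$ using only gradient differences (finite-difference Hessian-vector products), so that no genuine inner loop is introduced. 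Starting from a random perturbation, with probability $\ge 1-\delta$ the initial vector has overlap $\Omega(\delta/\sqrt n)$ with the most-negative-curvature eigenspace; the accelerated iteration amplifies that component to constant size in $\tilde O\big(\sqrt{\ell/\sqrt{\rho\epsilon}}\cdot\log(n/\delta)\big)=\tilde O(\log n/\epsilon^{1/4})$ iterations -- the one logarithm enters exactly here. Once a unit vector $\hat v$ with $\hat v^\top\mathcal H(\x_0)\hat v\le-\tfrac12\sqrt{\rho\epsilon}$ is obtained, a step of length $\Theta(\sqrt{\epsilon/\rho})$ along $\pm\hat v$ decreases $f$ by $\Omega(\sqrt{\epsilon^3/\rho})$ by a second-order Taylor expansion with Hessian-Lipschitz remainder. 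Thus each escape phase costs $\tilde O(\log n/\epsilon^{1/4})$ iterations and buys $\Omega(\sqrt{\epsilon^3/\rho})$ potential decrease, so there are $O\big((f(\x_0)-f^\ast)\sqrt\rho/\epsilon^{1.5}\big)$ escape phases and $\tilde O(\log n/\epsilon^{1.75})$ escape-phase iterations in total. Adding the two contributions gives the stated bound.

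The step I expect to be the crux is the \emph{robustness} of this gradient-based (Chebyshev) power method: one must show that the quadratic error $\vect e(\x)$, the discretization of the Chebyshev recurrence, and the finite-difference approximation of Hessian-vector products do not spoil the amplification -- i.e., that the component along the negative-curvature subspace stays ahead of the accumulated error throughout all $\tilde O(\log n/\epsilon^{1/4})$ steps, while the iterate itself remains in a small neighborhood of $\x_0$ where the quadratic model is valid (an improve-or-localize dichotomy for the iteration itself). Carrying this through, together with the probabilistic lower bound on the initial overlap and the bookkeeping that folds both ingredients into one loop, is where the real work lies; the descent-lemma and Taylor-expansion accounting sketched above are comparatively routine.
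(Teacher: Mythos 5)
Your overall skeleton is the right one and matches the paper's proof of \thm{PAGD+ANC-Complexity}: split the run into large-gradient descent phases (charged by \lem{AGD-large-gradient} and the Hamiltonian monotonicity of \lem{Hamiltonian-decrease}, giving $\tilde O(\Delta_f/\epsilon^{1.75})$ iterations with no dimension dependence) and escape phases (each costing $\mathscr T'=\tilde O(\sqrt{\ell}(\rho\epsilon)^{-1/4}\log n)$ gradient queries, each buying a function decrease of $\Omega(\sqrt{\epsilon^3/\rho})$ via \lem{utilize-NC}, so at most $O(\Delta_f\sqrt{\rho/\epsilon^3})$ of them); the initial-overlap bound $\Pr\{\alpha_0\geq\delta\sqrt{\pi/n}\}\geq 1-\delta$ and the ``amplified signal stays ahead of accumulated quadratic/finite-difference error'' recursion are exactly the content of \prop{ANC-finding}, \lem{AGD-component-lowerbound}, and Eq.~\eqn{gradient-approximation-0}. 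You have also correctly identified the crux: showing the deviation term $\Delta$ in Eq.~\eqn{Delta-defn} does not overwhelm the growing component along $\mathfrak S_\parallel$, which the paper handles by lower-bounding $\alpha_t=\|\vect y_{t,\parallel}\|/\|\vect y_t\|$ uniformly over the run.

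Where you diverge from the paper is in the acceleration mechanism inside the escape phase. You propose realizing the speedup via the Chebyshev three-term recurrence for $\cheb_k(I-\eta\mathcal H)$ implemented with gradient differences. The paper deliberately does \emph{not} do this: \algo{ANCGD} instead embeds a \emph{renormalized Nesterov momentum} step $\vect z_{t+1}=\vect x_{t+1}+(1-\theta)(\vect x_{t+1}-\vect x_t)$ (with per-step projection back to the sphere of radius $r'$) and analyzes the resulting two-step linear recurrence directly by its characteristic equation (see \append{ANC-finding-proof}), rather than invoking Chebyshev approximation theory. The authors single out the Chebyshev route of \cite{allen2018neon2} as potentially requiring extra nested looping to boost the success probability, and one of their stated contributions is precisely avoiding that by reusing the same momentum/NCE machinery they already run in the large-gradient regime, so the negative-curvature subroutine shares the PAGD loop rather than being a separate Chebyshev solver. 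Your sketch asserts that a Chebyshev recurrence can be run ``with no genuine inner loop'' and with the same robustness to the $O(\rho\|\x-\x_0\|^2)$ quadratic error and to renormalization, but does not argue this; the paper's characteristic-equation analysis of Nesterov momentum is what actually carries that burden. If you want your route to stand on its own you would have to redo \lem{AGD-component-lowerbound}-style bounds for the Chebyshev recurrence (which is conditioned differently and is more sensitive to the assumed spectral interval), whereas simply replacing ``Chebyshev'' by ``Nesterov AGD with renormalization'' makes your outline coincide with the paper's proof.
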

Technically, our work is inspired by the perturbed gradient descent (PGD) algorithm in~\cite{jin2017escape,jin2019stochastic} and the perturbed accelerated gradient descent (PAGD) algorithm in~\cite{jin2018accelerated}. Specifically, PGD applies gradient descents iteratively until it reaches a point with small gradient, which can be a potential saddle point. Then PGD generates a uniform perturbation in a small ball centered at that point and then continues the GD procedure. It is demonstrated that, with an appropriate choice of the perturbation radius, PGD can shake the point off from the neighborhood of the saddle point and converge to a second-order stationary point with high probability. The PAGD in~\cite{jin2018accelerated} adopts a similar perturbation idea, but the GD is replaced by Nesterov's AGD~\cite{nesterov1983method}.

Our algorithm is built upon PGD and PAGD but with one main modification regarding the perturbation idea: it is more efficient to add a perturbation in the \emph{negative curvature direction} nearby the saddle point, rather than the uniform perturbation in PGD and PAGD, which is a compromise since we generally cannot access the Hessian at the saddle due to its high computational cost. Our key observation lies in the fact that we do not have to compute the entire Hessian to detect the negative curvature. Instead, in a small neighborhood of a saddle point, gradients can be viewed as Hessian-vector products plus some bounded deviation. In particular, GD near the saddle with learning rate $1/\ell$ is approximately the same as the power method of the matrix $(I-\mathcal{H}/\ell)$. As a result, the most negative eigenvalues stand out in GD because they have leading exponents in the power method, and thus it approximately moves along the direction of the most negative curvature nearby the saddle point. Following this approach, we can escape the saddle points more rapidly than previous algorithms: for a constant $\epsilon$, PGD and PAGD take $O(\log n)$ iterations to decrease the function value by $\Omega(1/\log^3 n)$ and $\Omega(1/\log^5 n)$ with high probability, respectively; on the contrary, we can first take $O(\log n)$ iterations to specify a negative curvature direction, and then add a larger perturbation in this direction to decrease the function value by $\Omega(1)$. See \prop{NC-finding} and \prop{ANC-finding}. After escaping the saddle point, similar to PGD and PAGD, we switch back to GD and AGD iterations, which are efficient to decrease the function value when the gradient is large~\cite{jin2017escape,jin2019stochastic,jin2018accelerated}. 

Our algorithm is also applicable to the stochastic setting where we can only access stochastic gradients, and the stochasticity is not under the control of our algorithm. We further assume that the stochastic gradients are Lipschitz (or equivalently, the underlying functions are gradient-Lipschitz, see \assum{stochastic-lipschitz}), which is also adopted in most of the existing works; see e.g.~\cite{fang2018spider,jin2017escape,jin2019stochastic,zhou2018finding}. We demonstrate that a simple extended version of our algorithm takes $O(\log^2 n)$ iterations to detect a negative curvature direction using only stochastic gradients, and then obtain an $\Omega(1)$ function value decrease with high probability. On the contrary, the perturbed stochastic gradient descent (PSGD) algorithm in~\cite{jin2017escape,jin2019stochastic}, the stochastic version of PGD, takes $O(\log^{10}n)$ iterations to decrease the function value by $\Omega(1/\log^{5}n)$ with high probability.

\begin{theorem}[informal]\label{thm:main-intro-SGD}
In the stochastic setting, our algorithm finds an $\epsilon$-approximate second-order stationary point using $\tilde{O}(\log^2 n/\epsilon^{4})$ iterations via stochastic gradients.
\end{theorem}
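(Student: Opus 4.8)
The plan is to run the standard ``large-gradient versus near-saddle'' dichotomy of \cite{jin2017escape,jin2019stochastic}, but with the uniform perturbation replaced by the stochastic-gradient-based negative-curvature search behind \prop{NC-finding}. At every iterate $\x$ of (mini-batched) stochastic gradient descent with a suitably small step size, one of two cases holds. If the averaged stochastic gradient at $\x$ has norm at least $\epsilon$, then under \assum{stochastic-lipschitz} a vector Azuma/Bernstein bound shows that this average is within $O(\epsilon)$ of $\nabla f(\x)$ with high probability, and the usual descent lemma then yields an expected function-value decrease of order $\epsilon^2/\ell$ per step; since $f$ is bounded below, this phase contributes at most $\tilde O(1/\epsilon^4)$ iterations in total, the extra $1/\epsilon^2$ over the deterministic count coming from the small step size (or batch size) needed to beat the gradient variance and from the union bound over iterations.

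Otherwise we sit at a candidate saddle: $\|\nabla f(\x_0)\|\lesssim\epsilon$ but $\lambda_{\min}(\H(\x_0))\le-\sqrt{\rho\epsilon}$. Here I would invoke the stochastic analogue of \prop{NC-finding}. Inside a ball of radius $r=\tilde\Theta(\sqrt{\epsilon/\rho})$ around $\x_0$, Hessian-Lipschitzness lets each stochastic gradient be written as $\H(\x_0)(\x-\x_0)$ plus a deterministic ``curvature drift'' of size $O(\rho r^2)$ plus a zero-mean stochastic error, so the SGD recursion becomes a \emph{noisy power iteration} for $I-\H(\x_0)/\ell$. Starting from a tiny random perturbation, the component along the most-negative eigendirection is amplified by a factor $1+\Omega(\sqrt{\rho\epsilon}/\ell)$ per step and, after $\tilde O(\log^2 n)$ steps (one $\log n$ to overcome the exponentially small initial projection of the perturbation, a further polylogarithmic factor to keep the accumulated noise controlled), dominates the iterate, identifying a unit vector $\vect{e}$ with $\vect{e}\trans\H(\x_0)\vect{e}\le-\Omega(\sqrt{\rho\epsilon})$. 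Adding a perturbation of magnitude $\Theta(\sqrt{\epsilon/\rho})$ along $\vect{e}$ and continuing then gives a function-value decrease of $\mathcal F=\tilde\Omega(\sqrt{\epsilon^3/\rho})$ with high probability. A potential argument over $f(\x_0)-f^\ast$ bounds the number of such episodes by $\tilde O(\sqrt{\rho/\epsilon^3})$, each of length $\tilde O(\log^2 n)$ iterations, which is lower-order in $1/\epsilon$ than the large-gradient phase; whenever neither case applies, the current iterate satisfies \eqn{eps-approx-local-min}, and a union bound over the polynomially many episodes and steps keeps the overall failure probability small. Combining the two phases gives $\tilde O(\log^2 n/\epsilon^4)$.

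The crux --- and the only place the argument genuinely departs from the deterministic case of \prop{NC-finding} --- is the noisy power iteration. In the deterministic setting one only needs the signal along the negative-curvature direction to grow geometrically while the iterate stays inside the radius-$r$ ball; here the martingale of accumulated stochastic errors is amplified by the \emph{same} factor $1+\Omega(\sqrt{\rho\epsilon}/\ell)$ at every step, so one must show that with a polylogarithmic mini-batch (equivalently, after a polylogarithmic blow-up in the number of steps) this noise stays a constant factor below the signal uniformly over all $\tilde O(\log^2 n)$ steps. I expect this to require a Freedman-type concentration inequality together with a stopping-time / improve-or-localize argument guaranteeing the iterate never exits the ball on which the $O(\rho r^2)$ bound on the non-quadratic deviation is valid, while simultaneously ensuring that this deviation term does not itself swamp the exponentially small initial projection onto $\vect{e}$. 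Everything else --- the stochastic descent lemma for the large-gradient phase, the union bounds, and the potential-function bookkeeping --- should be a routine adaptation of the deterministic analysis and of \cite{jin2017escape,jin2019stochastic}.
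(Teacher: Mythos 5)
Your high-level plan matches the paper's: the same large-gradient vs.\ near-saddle dichotomy, a gradient-based noisy power iteration to extract negative curvature, a perturbation of size $\Theta(\sqrt{\epsilon/\rho})$ along the resulting direction (this is exactly \lem{utilize-NC}), and potential-function bookkeeping giving the $\tilde O(1/\epsilon^4)$ outer complexity (\thm{PSGD+SNC-full}). Where you diverge from the paper — and where the gap sits — is in \emph{how} the power iteration obtains its signal. The paper's \algo{SNC-finding} initializes $\vect{y}_0=\vect{0}$ and at every step actively \emph{injects} a fresh isotropic Gaussian $\xi_t\sim\mathcal N(0,\,r_s^2 I/n)$ on top of the mini-batched stochastic gradient, then renormalizes. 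In the decomposition $\vect{z}_t=-\vect{q}_h(t)-\vect{q}_{sg}(t)-\vect{q}_p(t)$ (Lemma~C.4), the negative-curvature signal lives entirely in the injected-noise term $\vect{q}_p(t)$, and the proof (Lemma~C.5) runs a two-phase norm-subGaussian martingale argument --- Jin et al.'s Lemma~37 machinery --- to show $\vect{q}_{sg}$ and $\vect{q}_h$ stay small relative to the $\vect{u}_1$-component of $\vect{q}_p$.

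Your proposal instead uses the deterministic mechanism of \algo{NC-finding}: start from a tiny random perturbation and let the component along $\vect{u}_1$ get amplified, controlling the accumulated SGD error by Freedman plus a stopping-time/improve-or-localize argument. The difficulty you flag at the end is real and is precisely where the initial-perturbation route breaks down. \assum{stochastic-variance} only controls the \emph{norm} of the stochastic error, so the error at step $\tau$, of size $\sim\tilde\ell\,\|\vect{y}_\tau\|/\sqrt m$, may be adversarially aligned with $\vect{u}_1$; this martingale is amplified at the same rate $1+\Theta(\sqrt{\rho\epsilon}/\ell)$ as the signal, while the signal starts at the much smaller scale $\|\vect{y}_0\|/\sqrt n$. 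Balancing these forces $m=\Omega(n)$, not a polylogarithmic mini-batch, so the $\tilde O(\log^2 n/\epsilon^4)$ rate would not follow. The paper's noise injection is the algorithmic device that removes the $n$: by starting from the origin, the SGD error term $\vect{q}_{sg}(t)$ scales with $\|\vect{z}_\tau\|$ and is therefore vanishingly small early on, and the injected isotropic Gaussians provide a controllable $\vect{u}_1$-signal that the algorithm, not the adversary, sets. A correct write-up of the stochastic case should use \algo{SNC-finding} (or an equivalent noise-injecting scheme) rather than an initial-perturbation power method.
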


Our results are summarized in \tab{main}. Although the underlying dynamics in~\cite{allen2018neon2,liu2018adaptive,xu2017neon} and our algorithm have similarity, the main focus of our work is different. Specifically, Refs.~\cite{allen2018neon2,liu2018adaptive,xu2017neon} mainly aim at using novel techniques to reduce the iteration complexity for finding a second-order stationary point, whereas our work mainly focuses on reducing the number of loops and hyper-parameters of negative curvature finding methods while preserving their advantage in iteration complexity, since a much simpler structure accords with empirical observations and enables wider applications. Moreover, the choice of perturbation in \cite{allen2018neon2} is based on the Chebyshev approximation theory, which may require additional nested-looped structures to boost the success probability.
In the stochastic setting, there are also other results studying nonconvex optimization~\cite{ge2019stabilized,li2019ssrgd,yu2018third,zhou2020stochastic,ge2017no,zhang2018primal,zhou2018stochastic} from different perspectives than escaping saddle points, which are incomparable to our results.

\begin{table}[ht]
\centering
\begin{tabular}{ccccc}
\hline
Setting & Reference & Oracle & Iterations & Simplicity \\ \hline
Non-stochastic & \cite{agarwal2017finding,carmon2018accelerated} & Hessian-vector product & $\tilde{O}(\log n/\epsilon^{1.75})$ & Nested-loop \\ \hline
Non-stochastic & \cite{jin2017escape,jin2019stochastic} & Gradient & $\tilde{O}(\log^{4} n/\epsilon^{2})$ & Single-loop \\ \hline
Non-stochastic & \cite{jin2018accelerated} & Gradient & $\tilde{O}(\log^{6} n/\epsilon^{1.75})$  & Single-loop \\ \hline
Non-stochastic & \cite{allen2018neon2,liu2018adaptive,xu2017neon} & Gradient & $\tilde{O}(\log n/\epsilon^{1.75})$  & Nested-loop \\ \hline
Non-stochastic & \textbf{this work} & Gradient & $\tilde{O}(\log n/\epsilon^{1.75})$  & Single-loop \\
\hline
Stochastic & \cite{jin2017escape,jin2019stochastic} & Gradient & $\tilde{O}(\log^{15} n/\epsilon^{4})$  & Single-loop \\ \hline
Stochastic & \cite{fang2019sharp} & Gradient & $\tilde{O}(\log^5 n/\epsilon^{3.5})$ & Single-loop \\ \hline
Stochastic & \cite{allen2018neon2} & Gradient & $\tilde{O}(\log^2 n/\epsilon^{3.5})$ & Nested-loop \\ \hline
Stochastic & \cite{fang2018spider} & Gradient & $\tilde{O}(\log^2 n/\epsilon^{3})$ & Nested-loop \\ \hline
Stochastic & \textbf{this work} & Gradient & $\tilde{O}(\log^2 n/\epsilon^{4})$ & Single-loop \\
\hline
\end{tabular}
\vspace{2mm}
\caption{A summary of the state-of-the-art results on finding approximate second-order stationary points by the first-order (gradient) oracle. Iteration numbers are highlighted in terms of the dimension $n$ and the precision $\epsilon$.}
\label{tab:main}
\end{table}

It is worth highlighting that our gradient-descent based algorithm enjoys the following nice features:
\begin{itemize}[leftmargin=*]
\item \emph{Simplicity:} Some of the previous algorithms have nested-loop structures with the concern of practical impact when setting the hyperparameters. In contrast, our algorithm based on negative curvature finding only contains a single loop with two components: gradient descent (including AGD or SGD) and perturbation. As mentioned above, such simple structure is preferred in machine learning, which increases the possibility of our algorithm to find real-world applications.

\item\emph{Numerical stability:} Our algorithm contains an additional renormalization step at each iteration when escaping from saddle points. Although in theoretical aspect a renormalization step does not affect the output and the complexity of our algorithm, when finding negative curvature near saddle points it enables us to sample gradients in a larger region, which makes our algorithm more numerically stable against floating point error and other errors. The introduction of renormalization step is enabled by the simple structure of our algorithm, which may not be feasible for more complicated algorithms~\cite{allen2018neon2,liu2018adaptive,xu2017neon}.

\item \emph{Robustness:} Our algorithm is robust against adversarial attacks when evaluating the value of the gradient. Specifically, when analyzing the performance of our algorithm near saddle points, we essentially view the deflation from pure quadratic geometry as an external noise. Hence, the effectiveness of our algorithm is unaffected under external attacks as long as the adversary is bounded by deflations from quadratic landscape.
\end{itemize}

Finally, we perform numerical experiments that support our polynomial speedup in $\log n$. We perform our negative curvature finding algorithms using GD or SGD in various landscapes and general classes of nonconvex functions, and use comparative studies to show that our \algo{NC-finding} and \algo{SNC-finding} achieve a higher probability of escaping saddle points using much fewer iterations than PGD and PSGD (typically less than $1/3$ times of the iteration number of PGD and $1/2$ times of the iteration number of PSGD, respectively). Moreover, we perform numerical experiments benchmarking the solution quality and iteration complexity of our algorithm against accelerated methods. Compared to PAGD \cite{jin2018accelerated} and even advanced optimization algorithms such as NEON+~\cite{xu2017neon}, \algo{ANCGD} possesses better solution quality and iteration complexity in various landscapes given by more general nonconvex functions. With fewer iterations compared to PAGD and NEON+ (typically less than $1/3$ times of the iteration number of PAGD and $1/2$ times of the iteration number of NEON+, respectively), our \algo{ANCGD} achieves a higher probability of escaping from saddle points.

\paragraph{Open questions.} This work leaves a couple of natural open questions for future investigation:
\begin{itemize}[leftmargin=*]
\item Can we achieve the polynomial speedup in $\log n$ for more advanced stochastic optimization algorithms with complexity $\tilde{O}(\poly(\log n)/\epsilon^{3.5})$~\cite{allen2018natasha2,allen2018neon2,fang2019sharp,tripuraneni2018stochastic,xu2018first} or $\tilde{O}(\poly(\log n)/\epsilon^{3})$~\cite{fang2018spider,zhou2019stochastic}?
\item How is the performance of our algorithms for escaping saddle points in real-world applications, such as tensor decomposition~\cite{ge2015escaping,ge2017optimization}, matrix completion~\cite{ge2016matrix}, etc.?
\end{itemize}

\paragraph{Broader impact.} This work focuses on the theory of nonconvex optimization, and as far as we see, we do not anticipate its potential negative societal impact. Nevertheless, it might have a positive impact for researchers who are interested in understanding the theoretical underpinnings of (stochastic) gradient descent methods for machine learning applications.


\paragraph{Organization.}
In \sec{SNC}, we introduce our gradient-based Hessian power method algorithm for negative curvature finding, and present how our algorithms provide polynomial speedup in $\log n$ for both PGD and PAGD. In \sec{SNC-setting}, we present the stochastic version of our negative curvature finding algorithm using stochastic gradients and demonstrate its polynomial speedup in $\log n$ for PSGD. Numerical experiments are presented in \sec{numerical}. We provide detailed proofs and additional numerical experiments in the supplementary material.


\section{A simple algorithm for negative curvature finding}\label{sec:SNC}
We show how to find negative curvature near a saddle point using a gradient-based Hessian power method algorithm, and extend it to a version with faster convergence rate by replacing gradient descents by accelerated gradient descents. The intuition works as follows: in a small enough region nearby a saddle point, the gradient can be approximately expressed as a Hessian-vector product formula, and the approximation error can be efficiently upper bounded, see Eq.~\eqn{gradient-approximation-0}. Hence, using only gradients information, we can implement an accurate enough Hessian power method to find negative eigenvectors of the Hessian matrix, and further find the negative curvature nearby the saddle.
\subsection{Negative curvature finding based on gradient descents}\label{sec:GD+NC}
We first present an algorithm for negative curvature finding based on gradient descents. Specifically, for any $\tilde{\vect{x}}\in\mathbb{R}^{n}$ with $\lambda_{\min}(\mathcal{H}(\tilde{\vect{x}}))\leq-\sqrt{\rho\epsilon}$, it finds a unit vector $\hat{\vect{e}}$ such that    $\hat{\vect{e}}^{T}\mathcal{H}(\tilde{\vect{x}})\hat{\vect{e}}\leq-\sqrt{\rho\epsilon}/4$.
\begin{algorithm}[htbp]
\caption{Negative Curvature Finding($\tilde{\vect{x}},r,\mathscr{T}$).}
\label{algo:NC-finding}
$\vect{y}_0\leftarrow$Uniform$(\mathbb{B}_
{\tilde{\vect{x}}}(r))$ where $\mathbb{B}_{\tilde{\vect{x}}}(r)$ is the $\ell_{2}$-norm ball centered at $\tilde{\vect{x}}$ with radius $r$\;
\For{$t=1,...,\mathscr{T}$}{
$\vect{y}_{t}\leftarrow \vect{y}_{t-1}-\frac{\|\vect{y}_{t-1}\|}{\ell r}\big(\nabla f(\tilde{\vect{x}}+r\vect{y}_{t-1}/\|\vect{y}_{t-1}\|)-\nabla f(\tilde{\vect{x}})\big)$ \label{lin:y_t-definition}\;
}
\textbf{Output} $\vect{y}_{\mathscr{T}}/r$.
\end{algorithm}

\begin{restatable}{proposition}{NC-finding}\label{prop:NC-finding}
Suppose the function $f\colon\mathbb{R}^n\to\mathbb{R}$ is $\ell$-smooth and $\rho$-Hessian Lipschitz. For any $0<\delta_0\leq1$, we specify our choice of parameters and constants we use as follows:
\begin{align}\label{eqn:parameter-choice}
\mathscr{T}=\frac{8\ell}{\sqrt{\rho\epsilon}}\cdot\log\Big(\frac{\ell}{\delta_0}\sqrt{\frac{n}{\pi\rho\epsilon}}\Big),\quad r=\frac{\epsilon}{8\ell}\sqrt{\frac{\pi}{n}}\delta_0.
\end{align}
Suppose that $\tilde{\vect{x}}$ satisfies $\lambda_{\min}(\nabla^2f(\tilde{\vect{x}}))\leq-\sqrt{\rho\epsilon}$. Then with probability at least $1-\delta_0$, \algo{NC-finding} outputs a unit vector $\hat{\vect{e}}$ satisfying
\begin{align}
\hat{\vect{e}}^{T}\mathcal{H}(\vect{x})\hat{\vect{e}}\leq-\sqrt{\rho\epsilon}/4,
\end{align}
using $O(\mathscr{T})=\tilde{O}\Big(\frac{\log n}{\sqrt{\rho\epsilon}}\Big)$ iterations, where $\mathcal{H}$ stands for the Hessian matrix of function $f$.
\end{restatable}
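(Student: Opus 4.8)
\emph{Reduction to a perturbed power iteration.} The plan is to show that \algo{NC-finding} is, up to a controlled perturbation, the power method applied to $M:=I-\mathcal{H}(\tilde{\vect x})/\ell$. Write $\mathcal{H}:=\mathcal{H}(\tilde{\vect x})$. Since every gradient query in \lin{y_t-definition} is made at a point exactly $r$ away from $\tilde{\vect x}$, the integral form of Taylor's theorem together with $\rho$-Hessian Lipschitzness gives
\[
\nabla f\!\Big(\tilde{\vect x}+\tfrac{r\vect y_{t-1}}{\|\vect y_{t-1}\|}\Big)-\nabla f(\tilde{\vect x}) = \tfrac{r}{\|\vect y_{t-1}\|}\,\mathcal{H}\vect y_{t-1}+\vect\Delta_{t-1},\qquad \|\vect\Delta_{t-1}\|\le \tfrac{\rho r^2}{2}.
\]
Substituting into \lin{y_t-definition}, the recursion collapses to
\begin{align}
\vect y_t = \Big(I-\tfrac{\mathcal{H}}{\ell}\Big)\vect y_{t-1}+\vect\xi_{t-1}=M\vect y_{t-1}+\vect\xi_{t-1},\qquad \|\vect\xi_{t-1}\|=\tfrac{\|\vect y_{t-1}\|}{\ell r}\|\vect\Delta_{t-1}\|\le \tfrac{\rho r}{2\ell}\|\vect y_{t-1}\|.
\end{align}
By $\ell$-smoothness $-\ell I\preceq\mathcal{H}\preceq\ell I$, so $0\preceq M\preceq 2I$; and since $\lambda_{\min}(\mathcal{H})\le-\sqrt{\rho\epsilon}$, the top eigenvalue of $M$ is $\mu_1=1-\lambda_{\min}(\mathcal{H})/\ell\ge 1+\sqrt{\rho\epsilon}/\ell$ (note $\ell\ge\sqrt{\rho\epsilon}$ necessarily, as $\mathcal{H}\succeq-\ell I$).

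\emph{What must be shown.} Diagonalize $\mathcal{H}=\sum_i\lambda_i\vect u_i\vect u_i^{T}$ with $\lambda_1\le\cdots\le\lambda_n$; let $\mathcal{S}=\spn\{\vect u_i:\lambda_i\le-\sqrt{\rho\epsilon}/2\}$ and decompose the unnormalized output $\vect y_{\mathscr{T}}=\vect a+\vect b$ with $\vect a=P_{\mathcal{S}}\vect y_{\mathscr{T}}$, $\vect b=P_{\mathcal{S}^\perp}\vect y_{\mathscr{T}}$. Using $\vect a^{T}\mathcal{H}\vect a\le-\tfrac{\sqrt{\rho\epsilon}}{2}\|\vect a\|^2$, $\vect b^{T}\mathcal{H}\vect b\le\ell\|\vect b\|^2$ and $\ell\ge\sqrt{\rho\epsilon}$, an elementary computation gives
\[
\|\vect b\|^2\le\tfrac{\sqrt{\rho\epsilon}}{8\ell}\|\vect a\|^2 \ \Longrightarrow\ \hat{\vect e}^{T}\mathcal{H}\hat{\vect e}\le-\sqrt{\rho\epsilon}/4,\qquad \hat{\vect e}=\vect y_{\mathscr{T}}/\|\vect y_{\mathscr{T}}\|.
\]
Since $\vect u_1\in\mathcal{S}$ we have $\|\vect a\|\ge p_{\mathscr{T}}:=|\langle\vect u_1,\vect y_{\mathscr{T}}\rangle|$, so it suffices to prove $\beta_{\mathscr{T}}\le\sqrt{\sqrt{\rho\epsilon}/(8\ell)}\,p_{\mathscr{T}}$, where $\beta_t:=\|P_{\mathcal{S}^\perp}\vect y_t\|$. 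The only randomness is $\vect y_0\sim\mathrm{Uniform}(\mathbb{B}(r))$: by rotational invariance $\langle\vect u_1,\vect y_0\rangle$ is distributed as one coordinate of a uniform point in the $n$-ball of radius $r$, and a standard anti-concentration estimate (the factors $\sqrt{\pi/n}$ in $r$ and $\sqrt{n/(\pi\rho\epsilon)}$ inside the logarithm of $\mathscr{T}$ are tuned for exactly this) gives $p_0\ge c\,\delta_0 r/\sqrt n$ with probability $\ge1-\delta_0$; condition on this event, and note $\|\vect y_0\|\le r$ always.

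\emph{Growth estimates.} From $\|M\|=\mu_1$ and $\|\vect\xi_{t-1}\|\le\tfrac{\rho r}{2\ell}\|\vect y_{t-1}\|$ one gets $\|\vect y_t\|\le(\mu_1+\tfrac{\rho r}{2\ell})^t r\le 2\mu_1^t r$ for all $t\le\mathscr{T}$ (since $\mathscr{T}\rho r/\ell$ is tiny for the chosen $r$), hence $\|\vect\xi_{t-1}\|\le\tfrac{\rho r^2}{\ell}\mu_1^{t-1}$. Projecting onto $\vect u_1$: $p_t\ge\mu_1 p_{t-1}-\|\vect\xi_{t-1}\|$, so dividing by $\mu_1^{t}$ and telescoping, $p_{\mathscr{T}}\ge\mu_1^{\mathscr{T}}(p_0-\tfrac{\mathscr{T}\rho r^2}{\ell})\ge\tfrac12\mu_1^{\mathscr{T}}p_0$. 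Projecting onto $\mathcal{S}^\perp$, where every eigenvalue of $M$ is at most $\nu:=1+\sqrt{\rho\epsilon}/(2\ell)$: $\beta_t\le\nu\beta_{t-1}+\|\vect\xi_{t-1}\|$, so $\beta_{\mathscr{T}}\le\nu^{\mathscr{T}}r+\tfrac{\mathscr{T}\rho r^2}{\ell}\mu_1^{\mathscr{T}-1}$. Combining,
\begin{align}
\frac{\beta_{\mathscr{T}}}{p_{\mathscr{T}}}\le\frac{2r}{p_0}\Big(\frac{\nu}{\mu_1}\Big)^{\mathscr{T}}+\frac{2\mathscr{T}\rho r^2}{\ell p_0}.
\end{align}
Because $\mu_1\ge1+\sqrt{\rho\epsilon}/\ell$ and $\sqrt{\rho\epsilon}/\ell\le1$, we have $\nu/\mu_1\le1-\sqrt{\rho\epsilon}/(4\ell)$, so the first term is at most $\tfrac{2r}{p_0}e^{-\mathscr{T}\sqrt{\rho\epsilon}/(4\ell)}$; the exponent equals $2\log(\tfrac{\ell}{\delta_0}\sqrt{n/(\pi\rho\epsilon)})$ and $p_0\ge c\delta_0 r/\sqrt n$, making it $O(\delta_0\rho\epsilon/(\ell^2\sqrt n))$, far below $\sqrt{\sqrt{\rho\epsilon}/(8\ell)}$. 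Using $p_0\ge c\delta_0 r/\sqrt n$ and $r\sqrt n/\delta_0=\Theta(\epsilon/\ell)$, the second term is $\Theta(\tfrac{\mathscr{T}\rho\epsilon}{\ell^2})=\Theta(\tfrac{\sqrt{\rho\epsilon}}{\ell}\log(\cdot))$, which is $\le\tfrac12\sqrt{\sqrt{\rho\epsilon}/(8\ell)}$ once $\ell/\sqrt{\rho\epsilon}\gtrsim\log^2(\cdot)$, i.e.\ in the standard small-$\epsilon$ regime. Hence $\beta_{\mathscr{T}}\le\sqrt{\sqrt{\rho\epsilon}/(8\ell)}\,p_{\mathscr{T}}$, and the reduction above gives $\hat{\vect e}^{T}\mathcal{H}\hat{\vect e}\le-\sqrt{\rho\epsilon}/4$. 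Each iteration uses two gradient calls, so the cost is $O(\mathscr{T})=\tilde{O}(\log n/\sqrt{\rho\epsilon})$.

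\emph{Main obstacle.} The delicate point is the second term in the last display: the Taylor deviation $\vect\xi_t$ grows at the \emph{same} exponential rate $\mu_1^{t}$ as the signal along $\vect u_1$, so it does not simply average out and must be kept below the tolerance $\sqrt{\sqrt{\rho\epsilon}/\ell}$ over all $\mathscr{T}$ steps. This is exactly why \eqn{parameter-choice} takes $r$ as small as $\Theta(\epsilon\delta_0/(\ell\sqrt n))$ — so the per-step relative error $\rho r/\ell$ is negligible — while choosing $\mathscr{T}=\Theta\big(\tfrac{\ell}{\sqrt{\rho\epsilon}}\log\tfrac{\ell\sqrt n}{\delta_0\sqrt{\rho\epsilon}}\big)$ just large enough for the components outside $\mathcal{S}$ (whose initial size relative to the signal is $\sim\sqrt n/\delta_0$, by the anti-concentration step) to contract below tolerance, and no larger. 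A secondary subtlety is that the contraction rate $\mu_1-\nu\ge\sqrt{\rho\epsilon}/(2\ell)$ uses the hypothesis $\lambda_{\min}(\mathcal{H})\le-\sqrt{\rho\epsilon}$ only through the single top eigenvalue, which is why the random start must be shown to have a non-negligible component along $\vect u_1$ specifically rather than merely inside $\mathcal{S}$.
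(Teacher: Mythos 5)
Your proof takes a genuinely different route from the paper. You track the scalar projection $p_t=|\langle\vect u_1,\vect y_t\rangle|$ onto the single top eigenvector and telescope an \emph{additive} error bound, $p_{\mathscr{T}}\ge\mu_1^{\mathscr{T}}\big(p_0-\sum_t\|\vect\xi_{t-1}\|/\mu_1^t\big)$, with the per-step deviation bounded via the crude absolute estimate $\|\vect y_{t-1}\|\le 2\mu_1^{t-1}r$. The paper instead tracks $\|\vect y_{t,\parallel}\|$ (the projection onto the whole eigenspace $\mathfrak{S}_{\parallel}$ of eigenvalues $\le-\sqrt{\rho\epsilon}$) and converts the deviation into a \emph{relative} per-step error $\rho r/(\alpha_{\min}\ell)$ via \lem{GD-component-lowerbound}, which asserts $\alpha_t=\|\vect y_{t,\parallel}\|/\|\vect y_t\|\ge\alpha_{\min}$ throughout; that lemma in turn needs the worst-case-landscape reduction of \lem{GD-worstcase}. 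Your approach avoids both auxiliary lemmas entirely, directly controls the algorithm's actual output $\vect y_{\mathscr{T}}$ (the paper's contradiction argument only produces a good direction at some intermediate $t_0\le\mathscr{T}$, which is not literally what the algorithm returns), and uses the weaker quadratic-form criterion $\|\vect b\|^2\le\frac{\sqrt{\rho\epsilon}}{8\ell}\|\vect a\|^2$ rather than the paper's stronger $\|\hat{\vect e}_{\perp'}\|\le\frac{\sqrt{\rho\epsilon}}{8\ell}$, giving extra room to absorb the error.

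What this buys, and what it costs: your additive telescoping accumulates the Taylor deviation over $\mathscr{T}$ steps (each term $\|\vect\xi_{t-1}\|/\mu_1^t\approx\rho r^2/\ell$ does \emph{not} decay), producing the $\Theta(\mathscr{T}\rho r^2/(\ell p_0))=\Theta\big(\frac{\sqrt{\rho\epsilon}}{\ell}\log(\cdot)\big)$ term, so your argument needs the implicit regularity constraint $\ell/\sqrt{\rho\epsilon}\gtrsim\log^2(\cdot)$ --- both for the final display to close and already for $p_{\mathscr{T}}\ge\frac12\mu_1^{\mathscr{T}}p_0$ and for $\|\vect y_t\|\le 2\mu_1^t r$. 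You acknowledge this honestly. It is worth noting, however, that the paper's proof is not free of such a constraint either: the inequality $\rho\epsilon/\ell^2+2\rho r/(\alpha_{\min}\ell)\le 1/\mathscr{T}$ inside \lem{GD-component-lowerbound}, needed for the $(1-1/\mathscr{T})^t\ge1/2$ step, already enforces $\ell/\sqrt{\rho\epsilon}\gtrsim\log(\cdot)$. So you pay one extra logarithmic factor in an implicit hypothesis that the proposition does not state but that both proofs quietly require; in exchange you get a shorter, more self-contained argument that reads as a textbook perturbed power iteration. If you wanted to eliminate the extra $\log$, you would have to recover the relative-error bound $\|\vect\xi_{t-1}\|\lesssim(\rho r/(\alpha_{\min}\ell))\,p_{t-1}$, which requires precisely the inductive lower bound on the signal fraction that the paper establishes through \lem{GD-component-lowerbound}; that is, the auxiliary lemma you skipped is exactly what prevents the $\mathscr{T}$-fold accumulation.
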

\begin{proof}
Without loss of generality we assume $\tilde{\vect{x}}=\vect{0}$ by shifting $\R^{n}$ such that $\tilde{\vect{x}}$ is mapped to $\vect{0}$. Define a new $n$-dimensional function
\begin{align}
h_f(\vect{x}):=f(\vect{x})-\left\<\nabla f(\vect{0}),\vect{x}\right\>,
\end{align}
for the ease of our analysis. Since $\left\<\nabla f(\vect{0}),\vect{x}\right\>$ is a linear function with Hessian being 0, the Hessian of $h_f$ equals to the Hessian of $f$, and $h_f(\vect{x})$ is also $\ell$-smooth and $\rho$-Hessian Lipschitz. In addition, note that $\nabla h_f(\vect{0})=\nabla f(\vect{0})-\nabla f(\vect{0})=0$. Then for all $\vect{x}\in\mathbb{R}^n$,
\begin{align}
\nabla h_f(\vect{x})=\int_{\xi=0}^{1}\mathcal{H}(\xi\vect{x})\cdot\vect{x}\,\d \xi=\mathcal{H}(\vect{0})\vect{x}+\int_{\xi=0}^{1}(\mathcal{H}(\xi\vect{x})-\mathcal{H}(\vect{0}))\cdot \vect{x}\,\d\xi.
\end{align}
Furthermore, due to the $\rho$-Hessian Lipschitz condition of both $f$ and $h_f$, for any $\xi\in[0,1]$ we have $\|\mathcal{H}(\xi\vect{x})-\mathcal{H}(\vect{0})\|\leq\rho\|\vect{x}\|$, which leads to
\begin{align}\label{eqn:gradient-approximation-0}
\|\nabla h_f(\vect{x})-\mathcal{H}(\vect{0})\vect{x}\|\leq\rho\|\vect{x}\|^2.
\end{align}

Observe that the Hessian matrix $\mathcal{H}(\vect{0})$ admits the following eigen-decomposition:
\begin{align}\label{eqn:GD-Hessian-decomposition}
\mathcal{H}(\vect{0})=\sum_{i=1}^{n}\lambda_i\vect{u}_{i}\vect{u}_i^{T},
\end{align}
where the set $\{\vect{u}_i\}_{i=1}^{n}$ forms an orthonormal basis of $\mathbb{R}^n$. Without loss of generality, we assume the eigenvalues $\lambda_1,\lambda_2,\ldots,\lambda_n$ corresponding to $\vect{u}_1,\vect{u}_2,\ldots,\vect{u}_n$ satisfy
\begin{align}
\lambda_1\leq\lambda_2\leq\cdots\leq\lambda_n,
\end{align}
in which $\lambda_1\leq-\sqrt{\rho\epsilon}$. If $\lambda_n\leq-\sqrt{\rho\epsilon}/2$, \prop{NC-finding} holds directly. Hence, we only need to prove the case where $\lambda_n>-\sqrt{\rho\epsilon}/2$, in which there exists some $p>1$, $p'>1$ with
\begin{align}
\lambda_p\leq -\sqrt{\rho\epsilon}\leq \lambda_{p+1},\quad\lambda_{p'}\leq -\sqrt{\rho\epsilon}/2< \lambda_{p'+1}.
\end{align}
We use $\mathfrak{S}_{\parallel}$, $\mathfrak{S}_{\perp}$ to separately denote the subspace of $\mathbb{R}^{n}$ spanned by $\left\{\vect{u}_1,\vect{u}_2,\ldots,\vect{u}_p\right\}$, $\left\{\vect{u}_{p+1},\vect{u}_{p+2},\ldots,\vect{u}_{n}\right\}$, and use $\mathfrak{S}_{\parallel}'$, $\mathfrak{S}_{\perp}'$ to denote the subspace of $\mathbb{R}^{n}$ spanned by $\left\{\vect{u}_1,\vect{u}_2,\ldots,\vect{u}_{p'}\right\}$, $\left\{\vect{u}_{p'+1},\vect{u}_{p+2},\ldots,\vect{u}_{n}\right\}$. Furthermore, we define
\begin{align}\label{eqn:para-perp-defn}
\vect{y}_{t,\parallel}&:=\sum_{i=1}^p\left\<\vect{u}_i,\vect{y}_t\right\>\vect{u}_i, \qquad\ 
\vect{y}_{t,\perp}:=\sum_{i=p}^n\left\<\vect{u}_i,\vect{y}_t\right\>\vect{u}_i; \\
\vect{y}_{t,\parallel'}&:=\sum_{i=1}^{p'}\left\<\vect{u}_i,\vect{y}_t\right\>\vect{u}_i, \qquad
\vect{y}_{t,\perp'}:=\sum_{i=p'}^n\left\<\vect{u}_i,\vect{y}_t\right\>\vect{u}_i
\end{align}
respectively to denote the component of $\vect{y}_t$ in \lin{y_t-definition} in the subspaces $\mathfrak{S}_{\parallel}$, $\mathfrak{S}_{\perp}$, $\mathfrak{S}_{\parallel}'$, $\mathfrak{S}_{\perp}'$, and let $\alpha_{t}:=\|\vect{y}_{t,\parallel}\|/\|\vect{y}_{t}\|$. Observe that
\begin{align}
\Pr\left\{\alpha_0\geq\delta_0\sqrt{\pi/n}\right\}\geq\Pr\left\{|y_{0,1}|/r\geq\delta_0\sqrt{\pi/n}\right\},
\end{align}
where $y_{0,1}:=\left\<\vect{u}_1,\vect{y}_0\right\>$ denotes the component of $\vect{y}_0$ along $\vect{u}_1$. Consider the case where $\alpha_{0}\geq\delta_0\sqrt{\pi/n}$, which can be achieved with probability  \begin{align}\label{eqn:alpha0-probability}
\Pr\left\{\alpha_0\geq\sqrt{\frac{\pi}{n}}\delta_0\right\}\geq1-\sqrt{\frac{\pi}{n}}\delta_0\cdot\frac{\text{Vol}(\mathbb{B}_0^{n-1}(1))}{\text{Vol}(\mathbb{B}_0^{n}(1))}\geq 1-\sqrt{\frac{\pi}{n}}\delta_0\cdot\sqrt{\frac{n}{\pi}}=1-\delta_0.
\end{align}
We prove that there exists some $t_0$ with $1\leq t_0\leq \mathscr{T}$ such that
\begin{align}
\|\vect{y}_{t_0,\perp'}\|/\|\vect{y}_{t_0}\|\leq\sqrt{\rho\epsilon}/(8\ell).
\end{align}
Assume the contrary, for any $1\leq t\leq\mathscr{T}$, we all have $\|\vect{y}_{t,\perp'}\|/\|\vect{y}_{t}\|>\sqrt{\rho\epsilon}/(8\ell)$. Then $\|\vect{y}_{t,\perp}'\|$ satisfies the following recurrence formula:
\begin{align}
\|\vect{y}_{t+1,\perp'}\|\leq (1+\sqrt{\rho\epsilon}/(2\ell))\|\vect{y}_{t,\perp'}\|+\|\Delta_{\perp'}\|\leq
(1+\sqrt{\rho\epsilon}/(2\ell)+\|\Delta\|/\|\vect{y}_{t,\perp'}\|)\|\vect{y}_{t,\perp'}\|,
\end{align}
where 
\begin{align}\label{eqn:Delta-defn}
\Delta:=\frac{\|\vect{y}_t\|}{r\ell}(\nabla h_f(r\vect{y}_t/\|\vect{y}_t\|)-\mathcal{H}(0)\cdot (r\vect{y}_t/\|\vect{y}_t\|))
\end{align}
stands for the deviation from pure quadratic approximation and $\|\Delta\|/\|\vect{y}_t\|\leq\rho r/\ell$ due to \eqn{gradient-approximation-0}. Hence,
\begin{align}
\|\vect{y}_{t+1,\perp'}\|\leq\Big(1+\frac{\sqrt{\rho\epsilon}}{2\ell}+\frac{\|\Delta\|}{\|\vect{y}_{t,\perp'}\|}\Big)\|\vect{y}_{t,\perp'}\|\leq\Big(1+\frac{\sqrt{\rho\epsilon}}{2\ell}+\cdot\frac{8\rho r}{\sqrt{\rho\epsilon}}\Big)\|\vect{y}_{t+1,\perp'}\|,
\end{align}
which leads to
\begin{align}\label{eqn:GD-perp'-recursion}
\|\vect{y}_{t,\perp'}\|
\leq \|\vect{y}_{0,\perp'}\|(1+\sqrt{\rho\epsilon}/(2\ell)+{8\rho r}/\sqrt{\rho\epsilon})^t
\leq\|\vect{y}_{0,\perp'}\|(1+5\sqrt{\rho\epsilon}/(8\ell))^t,\ \ \forall t\in[\mathscr{T}].
\end{align}
Similarly, we can have the recurrence formula for $\|\vect{y}_{t,\parallel}\|$:
\begin{align}
\|\vect{y}_{t+1,\parallel}\|\geq (1+\sqrt{\rho\epsilon}/(2\ell))\|\vect{y}_{t,\parallel}\|-\|\Delta_{\parallel}\|\geq
(1+\sqrt{\rho\epsilon}/(2\ell)-\|\Delta\|/(\alpha_t\|\vect{y}_{t}\|))\|\vect{y}_{t,\parallel}\|.
\end{align}
Considering that $\|\Delta\|/\|\vect{y}_t\|\leq\rho r/\ell$ due to \eqn{gradient-approximation-0}, we can further have
\begin{align}
\|\vect{y}_{t+1,\parallel}\|
\geq
(1+\sqrt{\rho\epsilon}/(2\ell)-\rho r/(\alpha_t\ell))\|\vect{y}_{t,\parallel}\|.
\end{align}
Intuitively, $\|\vect{y}_{t,\parallel}\|$ should have a faster increasing rate than $\|\vect{y}_{t,\perp}\|$ in this gradient-based Hessian power method, ignoring the deviation from quadratic approximation. As a result, the value the value $\alpha_t=\|\vect{y}_{t,\parallel}\|/\|\vect{y}_t\|$ should be non-decreasing. It is demonstrated in \lem{GD-component-lowerbound} in \append{NC-GD} that, even if we count this deviation in, $\alpha_t$ can still be lower bounded by some constant $\alpha_{\min}$:
\begin{align}
\alpha_{t}\geq\alpha_{\min}=\frac{\delta_0}{4}\sqrt{\frac{\pi}{n}},\qquad \forall 1\leq t\leq\mathscr{T}.
\end{align}
by which we can further deduce that
\begin{align}
\|\vect{y}_{t,\parallel}\|\geq\|\vect{y}_{0,\parallel}\|(1+\sqrt{\rho\epsilon}/\ell-\rho r/(\alpha_{\min}\ell))^t
\geq\|\vect{y}_{0,\parallel}\|(1+7\sqrt{\rho\epsilon}/(8\ell))^t,\quad\forall 1\leq t\leq\mathscr{T}.
\end{align}
Observe that
\begin{align}
\frac{\|\vect{y}_{\mathscr{T},\perp'}\|}{\|\vect{y}_{\mathscr{T},\parallel}\|}
\leq\frac{\|\vect{y}_{0,\perp'}\|}{\|\vect{y}_{0,\parallel}\|}\cdot\Big(\frac{1+5\sqrt{\rho\epsilon}/(8\ell)}{1+7\sqrt{\rho\epsilon}/(8\ell)}\Big)^{\mathscr{T}}
\leq\frac{1}{\delta_{0}}\sqrt{\frac{n}{\pi}}\Big(\frac{1+5\sqrt{\rho\epsilon}/(8\ell)}{1+7\sqrt{\rho\epsilon}/(8\ell)}\Big)^{\mathscr{T}}\leq \frac{\sqrt{\rho\epsilon}}{8\ell}.
\end{align}
Since $\|\vect{y}_{\mathscr{T},\parallel}\|\leq\|\vect{y}_{\mathscr{T}}\|$, we have $\|\vect{y}_{\mathscr{T},\perp'}\|/\|\vect{y}_{\mathscr{T}}\|\leq\sqrt{\rho\epsilon}/(8\ell)$, contradiction. Hence, there here exists some $t_0$ with $1\leq t_0\leq \mathscr{T}$ such that $\|\vect{y}_{t_0,\perp'}\|/\|\vect{y}_{t_0}\|\leq\sqrt{\rho\epsilon}/(8\ell)$. Consider the normalized vector $\hat{\vect{e}}=\vect{y}_{t_0}/r$, we use $\hat{\vect{e}}_{\perp'}$ and $\hat{\vect{e}}_{\parallel'}$ to separately denote the component of $\hat{\vect{e}}$ in $\mathfrak{S}_{\perp}'$ and $\mathfrak{S}_{\parallel}'$. Then, $\|\hat{\vect{e}}_{\perp'}\|\leq\sqrt{\rho\epsilon}/(8\ell)$ whereas $\|\hat{\vect{e}}_{\parallel'}\|\geq 1-\rho\epsilon/(8\ell)^2$. Then,
\begin{align}
\hat{\vect{e}}^{T}\mathcal{H}(\vect{0})\hat{\vect{e}}=(\hat{\vect{e}}_{\perp'}+\hat{\vect{e}}_{\parallel'})^{T}\mathcal{H}(\vect{0})(\hat{\vect{e}}_{\perp'}+\hat{\vect{e}}_{\parallel'})=\hat{\vect{e}}_{\perp'}^{T}\mathcal{H}(\vect{0})\hat{\vect{e}}_{\perp'}+\hat{\vect{e}}_{\parallel'}^{T}\mathcal{H}(\vect{0})\hat{\vect{e}}_{\parallel'}
\end{align}
since $\mathcal{H}(\vect{0})\hat{\vect{e}}_{\perp'}\in\mathfrak{S}_{\perp}'$ and $\mathcal{H}(\vect{0})\hat{\vect{e}}_{\parallel'}\in\mathfrak{S}_{\parallel}'$. Due to the $\ell$-smoothness of the function, all eigenvalue of the Hessian matrix has its absolute value upper bounded by $\ell$. Hence,
\begin{align}
\hat{\vect{e}}_{\perp'}^{T}\mathcal{H}(\vect{0})\hat{\vect{e}}_{\perp'}\leq\ell\|\hat{\vect{e}}_{\perp'}^{T}\|_{2}^2=\rho\epsilon/(64\ell^2).
\end{align}
Further according to the definition of $\mathfrak{S}_{\parallel}$, we have
\begin{align}
\hat{\vect{e}}_{\parallel'}^{T}\mathcal{H}(\vect{0})\hat{\vect{e}}_{\parallel'}\leq-\sqrt{\rho\epsilon}\|\hat{\vect{e}}_{\parallel'}\|^2/2.
\end{align}
Combining these two inequalities together, we can obtain
\begin{align}
\hat{\vect{e}}^{T}\mathcal{H}(\vect{0})\hat{\vect{e}}&=\hat{\vect{e}}_{\perp}^{T}\mathcal{H}(\vect{0})\hat{\vect{e}}_{\perp'}+\hat{\vect{e}}_{\parallel'}^{T}\mathcal{H}(\vect{0})\hat{\vect{e}}_{\parallel'}
\leq-\sqrt{\rho\epsilon}\|\hat{\vect{e}}_{\parallel'}\|^2/2+\rho\epsilon/(64\ell^2)\leq-\sqrt{\rho\epsilon}/4.
\end{align}
\end{proof}

\begin{remark}
In practice, the value of $\|\vect{y}_t\|$ can become large during the execution of \algo{NC-finding}. To fix this, we can renormalize $\vect{y}_t$ to have $\ell_{2}$-norm $r$ at the ends of such iterations, and this does not influence the performance of the algorithm.
\end{remark}


\subsection{Faster negative curvature finding based on accelerated gradient descents}\label{sec:AGD+NC}

In this subsection, we replace the GD part in \algo{NC-finding} by AGD to obtain an accelerated negative curvature finding subroutine with similar effect and faster convergence rate, based on which we further implement our Accelerated Gradient Descent with Negative Curvature Finding (\algo{ANCGD}). Near any saddle point $\tilde{\vect{x}}\in\mathbb{R}^{n}$ with $\lambda_{\min}(\mathcal{H}(\tilde{\vect{x}}))\leq-\sqrt{\rho\epsilon}$, \algo{ANCGD} finds a unit vector $\hat{\vect{e}}$ such that $\hat{\vect{e}}^{T}\mathcal{H}(\tilde{\vect{x}})\hat{\vect{e}}\leq-\sqrt{\rho\epsilon}/4$.

\begin{algorithm}[htbp]
\caption{Perturbed Accelerated Gradient Descent with Accelerated Negative Curvature Finding($\vect{x}_0,\eta,\theta,\gamma,s,\mathscr{T}',r'$)}\label{algo:ANCGD}
$t_{\text{perturb}}\leftarrow 0$, $\vect{z}_0\leftarrow \vect{x}_0$, $\tilde{\vect{x}}\leftarrow \vect{x}_0$, $\zeta\leftarrow 0$\;
\For{$t=0,1,2,...,T$}{
\If{$\|\nabla f(\vect{x}_{t})\|\leq\epsilon$ and $t-t_{\text{perturb}}>\mathscr{T}$}{
$\tilde{\vect{x}}=\vect{x}_t$\;
$\vect{x}_t\leftarrow\text{Uniform}(\mathbb{B}_{\tilde{\vect{x}}}(r'))$ where $\text{Uniform}(\mathbb{B}_{\tilde{\vect{x}}}(r'))$ is the $\ell_2$-norm ball centered at $\tilde{\vect{x}}$ with radius $r'$, $\vect{z}_t\leftarrow \vect{x}_t$, $\zeta\leftarrow \nabla f(\tilde{\vect{x}})$, $t_{\text{perturb}}\leftarrow t$\;\label{lin:AGD-perturb}
}
\If{$t-t_{\text{perturb}}=\mathscr{T}'$}{
$\hat{\vect{e}}:=\frac{\vect{x}_t-\tilde{\vect{x}}}{\|\vect{x}_t-\tilde{\vect{x}}\|}$\;\label{lin:NC-direction}
$\vect{x}_{t}\leftarrow \tilde{\vect{x}}-\frac{f'_{\hat{\vect{e}}}(\tilde{\vect{x}})}{4|f'_{\hat{\vect{e}}}(\tilde{\vect{x}})|}\sqrt{\frac{\epsilon}{\rho}}\cdot\hat{\vect{e}}$, $\vect{z}_t\leftarrow\vect{x}_t$, $\zeta=\vect{0}$\;
}

$\vect{x}_{t+1}\leftarrow\vect{z}_t-\eta(\nabla f(\vect{z}_t)-\zeta)$\;
$\vect{v}_{t+1}\leftarrow\vect{x}_{t+1}-\vect{x}_{t}$\;
$\vect{z}_{t+1}\leftarrow\vect{x}_{t+1}+(1-\theta)\vect{v}_{t+1}$\;
\If{$t_{\text{perturb}}\neq 0$ and $t-t_{\text{perturb}}<\mathscr{T}'$}
{
$\vect{z}_{t+1}\leftarrow \tilde{\vect{x}}+r'\cdot\frac{\vect{z}_{t+1}-\tilde{\vect{x}}}{\|\vect{z}_{t+1}-\tilde{\vect{x}}\|}$, $\ \vect{x}_{t+1}\leftarrow\tilde{\vect{x}}+r'\cdot\frac{\vect{x}_{t+1}-\tilde{\vect{x}}}{\|\vect{z}_{t+1}-\tilde{\vect{x}}\|}$\;
}
\Else{
\If{$f(\vect{x}_{t+1})\leq f(\vect{z}_{t+1})+\left\<\nabla f(\vect{z}_{t+1}),\vect{x}_{t+1}-\vect{z}_{t+1}\right\>-\frac{\gamma}{2}\|\vect{z}_{t+1}-\vect{x}_{t+1}\|^2$}
{
$(\vect{x}_{t+1},\vect{v}_{t+1})\leftarrow$NegativeCurvatureExploitation($\vect{x}_{t+1},\vect{v}_{t+1},s$)\footnotemark\; \label{lin:NCE} 
$\vect{z}_{t+1}\leftarrow\vect{x}_{t+1}+(1-\theta)\vect{v}_{t+1}$\;
}
}
}
\end{algorithm}

\footnotetext{\label{fnote:NCE} This NegativeCurvatureExploitation (NCE) subroutine was originally introduced in \cite[Algorithm 3]{jin2018accelerated} and is called when we detect that the current momentum $\vect{v}_t$ coincides with a negative curvature direction of $\vect{z}_t$. In this case, we reset the momentum $\vect{v}_t$ and decide whether to exploit this direction based on the value of $\|\vect{v}_t\|$.}

The following proposition exhibits the effectiveness of \algo{ANCGD} for finding negative curvatures near saddle points:
\begin{restatable}{proposition}{ANC-finding}\label{prop:ANC-finding}
Suppose the function $f\colon\mathbb{R}^n\to\mathbb{R}$ is $\ell$-smooth and $\rho$-Hessian Lipschitz. For any $0<\delta_0\leq 1$, we specify our choice of parameters and constants we use as follows:
\begin{align}
    \eta    &:= \frac{1}{4\ell}     & \theta  &:= \frac{(\rho\epsilon)^{1/4}}{4\sqrt{\ell}}  &
      \mathscr{T}'  &:= \frac{32\sqrt{\ell}}{(\rho\epsilon)^{1/4}}\log\Big(\frac{\ell}{\delta_0}\sqrt{\frac{n}{\rho\epsilon}}\Big)  \nonumber\\
      \gamma   &:= \frac{\theta^2}{\eta}     & s &:= \frac{\gamma}{4\rho} &
      r'&:=\frac{\delta_0\epsilon}{32}\sqrt{\frac{\pi}{\rho n}}.
\label{eqn:parameter-choice-AGD}
\end{align}
Then for a point $\tilde{\vect{x}}$ satisfying $\lambda_{\min}(\nabla^2f(\tilde{\vect{x}}))\leq-\sqrt{\rho\epsilon}$, if running \algo{ANCGD} with the uniform perturbation in \lin{AGD-perturb} being added at $t=0$, the unit vector $\hat{\vect{e}}$ in  \lin{NC-direction} obtained after $\mathscr{T}'$ iterations satisfies:
\begin{align}
\mathbb{P}\Big(\hat{\vect{e}}^{T}\mathcal{H}(\vect{x})\hat{\vect{e}}\leq-\sqrt{\rho\epsilon}/4\Big)\geq 1-\delta_0.
\end{align}
\end{restatable}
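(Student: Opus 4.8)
The plan is to transcribe the proof of \prop{NC-finding} to the accelerated setting, replacing the scalar gradient-power-method recursion by the momentum-coupled accelerated one and, as there, tracking the weights of the iterates in the eigenspaces of $\mathcal H(\tilde{\vect x})$. First I would shift coordinates so that $\tilde{\vect x}=\vect 0$ and introduce $h_f(\vect x):=f(\vect x)-\langle\nabla f(\vect 0),\vect x\rangle$; the point of subtracting $\zeta=\nabla f(\tilde{\vect x})$ in \lin{AGD-perturb} is precisely that, throughout the window $t-t_{\mathrm{perturb}}<\mathscr{T}'$, \algo{ANCGD} runs AGD applied to $h_f$, whose gradient obeys $\|\nabla h_f(\vect x)-\mathcal H(\vect 0)\vect x\|\le\rho\|\vect x\|^2$ by \eqn{gradient-approximation-0}. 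I would then check that inside this window neither the descent-condition test nor the NCE call in \lin{NCE} fires (both sit in the \texttt{else} of the renormalization branch), so each step is one AGD update followed by rescaling the pair $(\vect z_{t+1},\vect x_{t+1})$ by the common factor $r'/\|\vect z_{t+1}-\tilde{\vect x}\|$ (so that $\vect z_{t+1}$ lands on the sphere of radius $r'$ about $\tilde{\vect x}$). Then I would eigendecompose $\mathcal H(\vect 0)=\sum_i\lambda_i\vect u_i\vect u_i^T$ with $\lambda_1\le\cdots\le\lambda_n$ and $\lambda_1\le-\sqrt{\rho\epsilon}$, dispose of the trivial case $\lambda_n\le-\sqrt{\rho\epsilon}/2$ directly, and otherwise introduce the thresholds $p,p'$ and the subspaces $\mathfrak S_\parallel=\spn\{\vect u_i:\lambda_i\le-\sqrt{\rho\epsilon}\}$, $\mathfrak S_{\perp'}=\spn\{\vect u_i:\lambda_i>-\sqrt{\rho\epsilon}/2\}$ (and $\mathfrak S_{\parallel'}$ its orthogonal complement) as in the proof of \prop{NC-finding}.

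Expanded in the eigenbasis, the quadratic part of one AGD step (before renormalization) decouples: along $\vect u_i$, the coefficient $a_t^{(i)}:=\langle\vect u_i,\vect x_t-\tilde{\vect x}\rangle$ evolves by $a_{t+1}^{(i)}=(1-\eta\lambda_i)\big((2-\theta)a_t^{(i)}-(1-\theta)a_{t-1}^{(i)}\big)$, i.e.\ by the companion matrix $\vect A_i=\begin{pmatrix}(1-\eta\lambda_i)(2-\theta)&-(1-\eta\lambda_i)(1-\theta)\\1&0\end{pmatrix}$ acting on $(a_t^{(i)},a_{t-1}^{(i)})$. The Hessian-Lipschitz deflation adds, exactly as in \eqn{Delta-defn}, a term whose norm relative to $\|\vect z_t-\tilde{\vect x}\|$ is at most $\eta\rho r'$, and the renormalization contributes only a positive scalar that cancels once we pass to ratios of subspace norms. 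With $\eta=1/(4\ell)$ and $\theta=(\rho\epsilon)^{1/4}/(4\sqrt\ell)$ --- chosen so that $\sqrt{\eta|\lambda_i|}\ge2\theta$ whenever $\lambda_i\le-\sqrt{\rho\epsilon}$ --- I would estimate the spectral radius of $\vect A_i$ over $\lambda_i\in[-\ell,\ell]$, in the spirit of the quadratic-AGD estimates of~\cite{jin2018accelerated}, to conclude that the component of the iterate in $\mathfrak S_\parallel$ grows at least like $(1+c_1\theta)^t$ while the component in $\mathfrak S_{\perp'}$ grows at most like $(1+c_2\theta)^t$ for absolute constants $c_1>c_2$; here $\mathscr T'=\Theta\big(\theta^{-1}\log(\ell\sqrt n/(\delta_0\sqrt{\rho\epsilon}))\big)$ is exactly large enough that $\big((1+c_2\theta)/(1+c_1\theta)\big)^{\mathscr T'}$ beats the initial-alignment factor $\tfrac1{\delta_0}\sqrt{n/\pi}$ and drops below $\sqrt{\rho\epsilon}/(8\ell)$.

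Writing $\alpha_t:=\|\vect x_{t,\parallel}\|/\|\vect x_t-\tilde{\vect x}\|$, the uniform perturbation in \lin{AGD-perturb} gives $\alpha_0\ge\delta_0\sqrt{\pi/n}$ with probability $\ge1-\delta_0$ by the same volume-ratio estimate as in \eqn{alpha0-probability}; an accelerated counterpart of \lem{GD-component-lowerbound}, proved in the style of \append{NC-GD} by feeding the relative deflation $\eta\rho r'$ and the rate gap into the coupled recursion, keeps $\alpha_t\ge\alpha_{\min}=\Theta(\delta_0\sqrt{\pi/n})$ throughout the window. Then, exactly as in \prop{NC-finding}, assuming for contradiction that $\|\vect x_{t,\perp'}\|/\|\vect x_t-\tilde{\vect x}\|>\sqrt{\rho\epsilon}/(8\ell)$ for all $t\le\mathscr T'$, the induced upper recursion for $\|\vect x_{t,\perp'}\|$ and lower recursion for $\|\vect x_{t,\parallel}\|$ (now with the accelerated rates $c_1,c_2$) combine with $\alpha_{\min}$ to force $\|\vect x_{\mathscr T',\perp'}\|/\|\vect x_{\mathscr T'}-\tilde{\vect x}\|\le\sqrt{\rho\epsilon}/(8\ell)$, a contradiction; hence the unit vector $\hat{\vect e}$ read off in \lin{NC-direction} after $\mathscr T'$ steps has $\|\hat{\vect e}_{\perp'}\|\le\sqrt{\rho\epsilon}/(8\ell)$, and the two-line estimate ending the proof of \prop{NC-finding} --- split $\hat{\vect e}=\hat{\vect e}_{\parallel'}+\hat{\vect e}_{\perp'}$ and bound $\hat{\vect e}_{\perp'}^T\mathcal H\hat{\vect e}_{\perp'}\le\ell\|\hat{\vect e}_{\perp'}\|^2$, $\hat{\vect e}_{\parallel'}^T\mathcal H\hat{\vect e}_{\parallel'}\le-\tfrac12\sqrt{\rho\epsilon}\|\hat{\vect e}_{\parallel'}\|^2$ --- gives $\hat{\vect e}^T\mathcal H(\tilde{\vect x})\hat{\vect e}\le-\sqrt{\rho\epsilon}/4$.

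The genuinely new work relative to \prop{NC-finding} is all in the momentum coupling. The main obstacle is obtaining spectral-radius control of $\vect A_i$ that is sharp and uniform over $\lambda_i\in[-\ell,\ell]$, in particular through the regime where its discriminant $(1-\eta\lambda_i)^2(2-\theta)^2-4(1-\eta\lambda_i)(1-\theta)$ changes sign, so as to pin down a clean constant-factor gap $c_1>c_2$ between the $\mathfrak S_\parallel$ and $\mathfrak S_{\perp'}$ growth rates; a close second is checking that interleaving the per-step renormalization of $(\vect x_t,\vect z_t)$ with the momentum update really behaves as an overall positive scalar, invisible to the ratios $\alpha_t$ and $\|\vect x_{t,\perp'}\|/\|\vect x_t-\tilde{\vect x}\|$ up to the $O(\eta\rho r')$ deflation of \eqn{gradient-approximation-0} --- which is what makes the accelerated version of \lem{GD-component-lowerbound} the technical heart of the argument. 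Everything else is a faithful transcription of the $\mathfrak S_\parallel$-versus-$\mathfrak S_{\perp'}$ argument already carried out for \algo{NC-finding}.
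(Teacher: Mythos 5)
Your roadmap matches the paper's proof step for step: the paper first establishes Lemma~\ref{lem:ANC-same}, which makes precise your observation that the per-step renormalization acts as an overall positive scalar invisible to the direction of the iterate (by exhibiting an exactly equivalent unrenormalized iteration, Algorithm~\ref{algo:ANC-finding}), then proves the accelerated component lower bound $\alpha_t'\ge\alpha_{\min}'$ (Lemma~\ref{lem:AGD-component-lowerbound}) via the explicit characteristic-root formula $x=\tfrac{1+\kappa}{2}\bigl((2-\theta)\pm\mu\bigr)$ for the recursion $\xi_{t+2}=(1+\kappa)\bigl((2-\theta)\xi_{t+1}-(1-\theta)\xi_t\bigr)$, which is exactly the spectral analysis of your companion matrix $\vect{A}_i$, and finally runs the same contradiction argument on the $\mathfrak{S}_\parallel$-versus-$\mathfrak{S}_{\perp'}$ ratio followed by the two-line quadratic-form estimate. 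The two items you single out as the technical heart --- scalar-invariance of the interleaved renormalization and sharp rate separation from the second-order recurrence --- are precisely what occupy Lemmas~\ref{lem:ANC-same} and~\ref{lem:AGD-component-lowerbound} in the paper, so this is the same proof with the same division of labor.
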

The proof of \prop{ANC-finding} is similar to the proof of \prop{NC-finding}, and is deferred to \append{ANC-finding-proof}.


\subsection{Escaping saddle points using negative curvature finding}\label{sec:NC-esc-saddle}
In this subsection, we demonstrate that our \algo{NC-finding} and \algo{ANCGD} with the ability to find negative curvature near saddle points can further escape saddle points of nonconvex functions. The intuition works as follows: we start with gradient descents or accelerated gradient descents until the gradient becomes small. At this position, we compute the negative curvature direction, described by a unit vector $\hat{\vect{e}}$, via \algo{NC-finding} or the negative curvature finding subroutine of \algo{ANCGD}. Then, we add a perturbation along this direction of negative curvature and go back to gradient descents or accelerated gradient descents with an additional NegativeCurvatureExploitation subroutine (see \fnote{NCE}). It has the following guarantee:
\begin{lemma}\label{lem:utilize-NC}
Suppose the function $f\colon\mathbb{R}^n\to\mathbb{R}$ is $\ell$-smooth and $\rho$-Hessian Lipschitz. Then for any point $\vect{x}_0\in\mathbb{R}^{n}$, if there exists a unit vector $\hat{\vect{e}}$ satisfying $\hat{\vect{e}}^{T}\mathcal{H}(\vect{x}_0)\hat{\vect{e}}\leq-\frac{\sqrt{\rho\epsilon}}{4}$ where $\mathcal{H}$ stands for the Hessian matrix of function $f$, the following inequality holds:
\begin{align}\label{eqn:function-decrease}
f\Big(\vect{x}_0-\frac{f'_{\hat{\vect{e}}}(\vect{x}_0)}{4|f'_{\hat{\vect{e}}}(\vect{x}_0)|}\sqrt{\frac{\epsilon}{\rho}}\cdot\hat{\vect{e}}\Big)\leq f(\vect{x}_0)-\frac{1}{384}\sqrt{\frac{\epsilon^3}{\rho}},
\end{align}
where $f'_{\hat{\vect{e}}}$ stands for the gradient component of $f$ along the direction of $\hat{\vect{e}}$.
\end{lemma}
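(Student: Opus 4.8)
The plan is to reduce the claim to a one–dimensional cubic Taylor expansion along the ray through $\vect{x}_0$ in the direction $\hat{\vect{e}}$. Set $g(\delta):=f(\vect{x}_0+\delta\hat{\vect{e}})$ for $\delta\in\mathbb{R}$; then $g'(0)=\left\<\nabla f(\vect{x}_0),\hat{\vect{e}}\right\>=f'_{\hat{\vect{e}}}(\vect{x}_0)$ and $g''(0)=\hat{\vect{e}}^{T}\mathcal{H}(\vect{x}_0)\hat{\vect{e}}\leq-\sqrt{\rho\epsilon}/4$ by hypothesis. The $\rho$-Hessian Lipschitz property controls the remainder: integrating $\|\mathcal{H}(\vect{x}_0+s\hat{\vect{e}})-\mathcal{H}(\vect{x}_0)\|\leq\rho|s|$ in the standard way yields the cubic Taylor bound
\begin{align*}
f(\vect{x}_0+\delta\hat{\vect{e}})\leq f(\vect{x}_0)+\delta f'_{\hat{\vect{e}}}(\vect{x}_0)+\frac{\delta^2}{2}\,\hat{\vect{e}}^{T}\mathcal{H}(\vect{x}_0)\hat{\vect{e}}+\frac{\rho}{6}|\delta|^3,\qquad\forall\,\delta\in\mathbb{R}.
\end{align*}
I would either invoke this as the usual consequence of Hessian-Lipschitzness or include the two-line integration argument; the only subtlety is getting the sharp constant $\rho/6$ rather than the cruder $\rho/2$.

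Next I would substitute the specific displacement used in the lemma, $\delta^{\star}:=-\frac{f'_{\hat{\vect{e}}}(\vect{x}_0)}{4|f'_{\hat{\vect{e}}}(\vect{x}_0)|}\sqrt{\epsilon/\rho}$, so that $|\delta^{\star}|=\tfrac14\sqrt{\epsilon/\rho}$ and the sign is chosen precisely so that the first-order term is $\delta^{\star}f'_{\hat{\vect{e}}}(\vect{x}_0)=-\tfrac14\sqrt{\epsilon/\rho}\,|f'_{\hat{\vect{e}}}(\vect{x}_0)|\leq 0$ (if $f'_{\hat{\vect{e}}}(\vect{x}_0)=0$, either sign works and this term simply vanishes). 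With the first-order term discarded, the remaining two terms are bounded by
\begin{align*}
\frac{(\delta^{\star})^2}{2}\,\hat{\vect{e}}^{T}\mathcal{H}(\vect{x}_0)\hat{\vect{e}}+\frac{\rho}{6}|\delta^{\star}|^3\leq \frac12\cdot\frac{\epsilon}{16\rho}\cdot\Big(-\frac{\sqrt{\rho\epsilon}}{4}\Big)+\frac{\rho}{6}\cdot\frac{1}{64}\Big(\frac{\epsilon}{\rho}\Big)^{3/2}=-\frac{1}{128}\sqrt{\frac{\epsilon^3}{\rho}}+\frac{1}{384}\sqrt{\frac{\epsilon^3}{\rho}}.
\end{align*}
Since $-\tfrac{1}{128}+\tfrac{1}{384}=-\tfrac{1}{192}$, this gives $f(\vect{x}_0+\delta^{\star}\hat{\vect{e}})\leq f(\vect{x}_0)-\tfrac{1}{192}\sqrt{\epsilon^3/\rho}$, which is in particular at most $f(\vect{x}_0)-\tfrac{1}{384}\sqrt{\epsilon^3/\rho}$, as claimed.

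I do not expect any real obstacle: this is a direct ``negative-curvature descent'' estimate and the computation even leaves a factor-of-two slack against the stated constant. The only places demanding a little care are (i) deriving the cubic Taylor inequality with the tight $\rho/6$ coefficient from the Hessian-Lipschitz assumption, and (ii) the sign bookkeeping for $\delta^{\star}$, including the degenerate case $f'_{\hat{\vect{e}}}(\vect{x}_0)=0$; everything else is arithmetic.
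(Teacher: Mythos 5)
Your proof is correct and follows essentially the same approach as the paper's: restrict to the one-dimensional slice along $\hat{\vect{e}}$, choose the sign of the displacement so the linear term is non-positive, and bound the remainder via the Hessian-Lipschitz cubic Taylor estimate before plugging in $|\delta|=\frac14\sqrt{\epsilon/\rho}$. In fact you obtain the sharper constant $\frac{1}{192}$ because the paper's own intermediate integration is slightly loose (it produces a $\frac{\rho}{3}x^3$ remainder rather than the tight $\frac{\rho}{6}|\delta|^3$), which is precisely where the stated $\frac{1}{384}$ comes from.
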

\begin{proof}
Without loss of generality, we assume $\vect{x}_0=\vect{0}$. We can also assume $\left\<\nabla f(\vect{0}),\hat{\vect{e}}\right\>\leq 0$; if this is not the case we can pick $-\hat{\vect{e}}$ instead, which still satisfies $(-\hat{\vect{e}})^{T}\mathcal{H}(\vect{x}_0)(-\hat{\vect{e}})\leq-\frac{\sqrt{\rho\epsilon}}{4}$. In practice, to figure out whether we should use $\hat{\vect{e}}$ or $-\hat{\vect{e}}$, we apply both of them in \eqn{function-decrease} and choose the one with smaller function value. Then, for any $\vect{x}=x_{\hat{\vect{e}}}\hat{\vect{e}}$ with some $x_{\hat{\vect{e}}}>0$, we have $\frac{\partial^2 f}{\partial x_{\hat{\vect{e}}}^2}(\vect{x})\leq -\frac{\sqrt{\rho\epsilon}}{4}+\rho x_{\hat{\vect{e}}}$ due to the $\rho$-Hessian Lipschitz condition of $f$. Hence,
\begin{align}
\frac{\partial f}{\partial x_{\hat{\vect{e}}}}(\vect{x})\leq f'_{\hat{\vect{e}}}(\vect{0})-\frac{\sqrt{\rho\epsilon}}{4}x_{\hat{\vect{e}}}+\rho x_{\hat{\vect{e}}}^2,
\end{align}
by which we can further derive that
\begin{align}
f(x_{\hat{\vect{e}}}\hat{\vect{e}})-f(\vect{0})\leq f'_{\hat{\vect{e}}}(\vect{0})x_{\hat{\vect{e}}}-\frac{\sqrt{\rho\epsilon}}{8}x_{\hat{\vect{e}}}^2+\frac{\rho}{3}x_{\hat{\vect{e}}}^3\leq-\frac{\sqrt{\rho\epsilon}}{8}x_{\hat{\vect{e}}}^2+\frac{\rho}{3}x_{\hat{\vect{e}}}^3.
\end{align}
Settings $x_{\hat{\vect{e}}}=\frac{1}{4}\sqrt{\frac{\epsilon}{\rho}}$ gives \eqn{function-decrease}.
\end{proof}

We give the full algorithm details based on \algo{NC-finding} in \append{explicit-algorithms}. Along this approach, we achieve the following:
\begin{theorem}[informal, full version deferred to \append{PAGD+ANC}]\label{thm:PAGD+ANC-Complexity}
For any $\epsilon>0$ and a constant $0<\delta\leq 1$, \algo{ANCGD} satisfies that at least one of the iterations $\vect{x}_{t}$ will be an $\epsilon$-approximate second-order stationary point in
\begin{align}
\tilde{O}\Big(\frac{(f(\vect{x}_{0})-f^{*})}{\epsilon^{1.75}}\cdot\log n\Big)
\end{align}
iterations, with probability at least $1-\delta$, where $f^{*}$ is the global minimum of $f$.
\end{theorem}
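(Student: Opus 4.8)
The plan is to run a standard potential-argument over the iterations of \algo{ANCGD}, tracking the function value $f(\vect{x}_t)$ as a monotone-ish decreasing quantity and accounting separately for the three regimes the algorithm can be in: (i) large-gradient AGD steps, (ii) the $\mathscr{T}'$-iteration negative-curvature-finding phase triggered after a perturbation, and (iii) the single exploitation step that moves along $\hat{\vect{e}}$. For regime (i), I would invoke the AGD descent machinery from~\cite{jin2018accelerated}: as long as $\|\nabla f(\vect{x}_t)\|>\epsilon$ (and we are not inside a perturbation phase), a window of AGD iterations with the NCE subroutine decreases $f$ by $\Omega(\epsilon^{1.75}/\ell^{1/2}\rho^{1/4})$-per-step-amortized, i.e. it takes at most $\tilde O(\sqrt{\ell}(f(\vect{x}_0)-f^*)/\epsilon^{1.75})$ such iterations in total before the gradient is small. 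This is exactly the "Hamiltonian decreases" argument of~\cite{jin2018accelerated} and I would quote it essentially verbatim, adjusting only for the fact that our NCE calls and perturbation-triggering condition are the same as theirs.

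The heart of the argument is regime (ii)+(iii). Whenever $\|\nabla f(\vect{x}_t)\|\le\epsilon$ and enough iterations have elapsed since the last perturbation, the algorithm is at a point $\tilde{\vect{x}}$ that is either already an $\epsilon$-approximate second-order stationary point — in which case we are done and output it — or satisfies $\lambda_{\min}(\mathcal{H}(\tilde{\vect{x}}))\le-\sqrt{\rho\epsilon}$. In the latter case I apply \prop{ANC-finding}: after $\mathscr{T}'=\tilde O(\sqrt{\ell}/(\rho\epsilon)^{1/4})$ iterations, with probability at least $1-\delta_0$ the unit vector $\hat{\vect{e}}$ computed in \lin{NC-direction} satisfies $\hat{\vect{e}}^T\mathcal{H}(\tilde{\vect{x}})\hat{\vect{e}}\le-\sqrt{\rho\epsilon}/4$. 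Then \lem{utilize-NC} applied at $\vect{x}_0=\tilde{\vect{x}}$ (note the exploitation step in \algo{ANCGD} is exactly the update in \eqn{function-decrease}) guarantees a function-value drop of at least $\frac{1}{384}\sqrt{\epsilon^3/\rho}$ in that single step. So each "escape episode" costs $\mathscr{T}'$ iterations and buys $\Omega(\sqrt{\epsilon^3/\rho})$ decrease, contributing at most $\tilde O\big(\sqrt{\ell}(f(\vect{x}_0)-f^*)/\epsilon^{1.75}\big)$ iterations when summed over all episodes, since the number of episodes is at most $(f(\vect{x}_0)-f^*)/\Omega(\sqrt{\epsilon^3/\rho})$.

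Combining the three regimes: the total iteration count is the sum of the large-gradient iterations, the escape-episode iterations, and a bounded number of "waiting" iterations between episodes, all of which are $\tilde O\big((f(\vect{x}_0)-f^*)\log n/\epsilon^{1.75}\big)$; hence if after this many iterations no $\vect{x}_t$ were an $\epsilon$-approximate second-order stationary point, the function value would have dropped below $f^*$, a contradiction. The probability bookkeeping: there are at most $O\big((f(\vect{x}_0)-f^*)\sqrt{\rho}/\sqrt{\epsilon^3}\big)=\poly(1/\epsilon)$ escape episodes, each failing with probability $\le\delta_0$, so choosing $\delta_0=\delta/\poly(1/\epsilon)$ (which only inflates $\mathscr{T}'$ and the logs by $\poly(\log(1/\epsilon))$, absorbed into $\tilde O$) gives overall success probability $\ge1-\delta$ by a union bound.

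The main obstacle I anticipate is not any single estimate but the careful interleaving of the two dynamics: one has to verify that the perturbation-and-renormalization phase of \algo{ANCGD} (the "\texttt{if} $t-t_{\text{perturb}}<\mathscr{T}'$" branch that projects $\vect{z}_{t+1},\vect{x}_{t+1}$ back onto the sphere of radius $r'$ around $\tilde{\vect{x}}$) genuinely implements the accelerated Hessian power method analyzed in \prop{ANC-finding}, and that once the exploitation step fires, the AGD state $(\vect{x}_{t+1},\vect{v}_{t+1},\vect{z}_{t+1})$ is reset consistently so that the large-gradient descent analysis of~\cite{jin2018accelerated} can be restarted cleanly from the post-escape point — in particular that the momentum is zeroed and the "improve or localize" invariant is re-established. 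I would handle this by checking that each reset in \algo{ANCGD} matches a corresponding initialization hypothesis in the quoted lemmas, deferring the detailed case analysis to the appendix as the informal statement indicates.
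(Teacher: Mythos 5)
Your proposal follows essentially the same route as the paper's proof in Appendix~C.3: bound the number of negative-curvature episodes by $O(\Delta_f\sqrt{\rho/\epsilon^3})$ via \lem{utilize-NC}, charge each episode $\mathscr{T}'=\tilde{O}(\sqrt{\ell}\log n/(\rho\epsilon)^{1/4})$ iterations via \prop{ANC-finding}, handle the large-gradient iterations via the Hamiltonian machinery of~\cite{jin2018accelerated} (\lem{AGD-large-gradient} and \lem{Hamiltonian-decrease}), and union-bound the per-episode failure probability $\delta_0$. The subtlety you flag at the end --- that the renormalized in-loop perturbation phase genuinely reproduces the accelerated power-method dynamics of \prop{ANC-finding} --- is precisely what \lem{ANC-same} in \append{ANC-finding-proof} is for, so your plan is complete modulo that lemma.
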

Intuitively, the proof of \thm{PAGD+ANC-Complexity} has two parts. The first part is similar to the proof of {\cite[Theorem 3]{jin2018accelerated}}, which shows that PAGD uses $\tilde{O}(\log^6 n/\epsilon^{1.75})$ iterations to escape saddle points. We show that there can be at most $\tilde{O}(\Delta_f/\epsilon^{1.75})$ iterations with the norm of gradient larger than $\epsilon$ using almost the same techniques, but with slightly different parameter choices. The second part is based on the negative curvature part of \algo{ANCGD}, our accelerated negative curvature finding algorithm. Specifically, at each saddle point we encounter, we can take $\tilde{O}(\log n/\epsilon^{1/4})$ iterations to find its negative curvature (\prop{ANC-finding}), and add a perturbation in this direction to decrease the function value by $O(\epsilon^{1.5})$ (\lem{utilize-NC}). Hence, the iterations introduced by \algo{ANC-finding} can be at most $\tilde{O}\big(\frac{\log n}{\epsilon^{1.5}}\cdot\frac{1}{\epsilon^{0.25}}\big)=\tilde{O}(\log n/\epsilon^{1.75})$, which is simply an upper bound on the overall iteration number. The detailed proof is deferred to \append{PAGD+ANC}.

\begin{remark}
Although \thm{PAGD+ANC-Complexity} only demonstrates that our algorithms will visit some $\epsilon$-approximate second-order stationary point during their execution with high probability, it is straightforward to identify one of them if we add a termination condition: once Negative Curvature Finding (\algo{NC-finding} or \algo{ANCGD}) is applied, we record the position $\vect{x}_{t_0}$ and the function value decrease due to the following perturbation. If the function value decrease is larger than $\frac{1}{384}\sqrt{\frac{\epsilon^3}{\rho}}$ as per \lem{utilize-NC}, then the algorithms make progress. Otherwise, $\vect{x}_{t_0}$ is an $\epsilon$-approximate second-order stationary point with high probability.
\end{remark}


\section{Stochastic setting}\label{sec:SNC-setting}

In this section, we present a stochastic version of \algo{NC-finding} using stochastic gradients, and demonstrate that it can also be used to escape saddle points and obtain a polynomial speedup in $\log n$ compared to the perturbed stochastic gradient (PSGD) algorithm in \cite{jin2019stochastic}.
\subsection{Stochastic negative curvature finding}\label{sec:SGD+NC}
In the stochastic gradient descent setting, the exact gradients oracle $\nabla f$ of function $f$ cannot be accessed. Instead, we only have unbiased stochastic gradients $\vect{g}(\vect{x};\theta)$ such that
\begin{align}
\nabla f(\vect{x})=\mathbb{E}_{\theta\sim\mathcal{D}}[\vect{g}(\vect{x};\theta)]\qquad\forall\vect{x}\in\mathbb{R}^n,
\end{align}
where $\mathcal{D}$ stands for the probability distribution followed by the random variable $\theta$. Define
\begin{align}
\zeta(\vect{x};\theta):=g(\vect{x};\theta)-\nabla f(\vect{x})
\end{align}
to be the error term of the stochastic gradient. Then, the expected value of vector $\zeta(\vect{x};\theta)$ at any $\vect{x}\in\mathbb{R}^n$ equals to $\vect{0}$. Further, we assume the stochastic gradient $\vect{g}(\vect{x},\theta)$ also satisfies the following assumptions, which were also adopted in previous literatures; see e.g.~\cite{fang2018spider,jin2017escape,jin2019stochastic,zhou2018finding}.
\begin{assumption}\label{assum:stochastic-variance}
For any $\vect{x}\in\mathbb{R}^{n}$, the stochastic gradient $\vect{g}(\vect{x};\theta)$ with $\theta\sim\mathcal{D}$ satisfies:
\begin{align}
\Pr[(\|\vect{g}(\vect{x};\theta)-\nabla f(\vect{x})\|\geq t)]\leq 2\exp(-t^2/(2\sigma^2)),\quad\forall t\in\mathbb{R}.
\end{align}
\end{assumption}
\begin{assumption}\label{assum:stochastic-lipschitz}
For any $\theta\in\text{supp}(\mathcal{D})$, $\vect{g}(\vect{x};\theta)$ is $\tilde{\ell}$-Lipschitz for some constant $\tilde{\ell}$:
\begin{align}
\|\vect{g}(\vect{x}_1;\theta)-\vect{g}(\vect{x}_2;\theta)\|\leq\tilde{\ell}\|\vect{x}_1-\vect{x}_2\|,\qquad\forall\vect{x}_1,\vect{x}_2\in\mathbb{R}^n.
\end{align}
\end{assumption}
\assum{stochastic-lipschitz} emerges from the fact that the stochastic gradient $\vect{g}$ is often obtained as an exact gradient of some smooth function,
\begin{align}
\vect{g}(\vect{x};\theta)=\nabla f(\vect{x};\theta).
\end{align}
In this case, \assum{stochastic-lipschitz} guarantees that for any $\theta\sim\mathcal{D}$, the spectral norm of the Hessian of $f(\vect{x};\theta)$ is upper bounded by $\tilde{\ell}$.
Under these two assumptions, we can construct the stochastic version of \algo{NC-finding}, as shown in \algo{SNC-finding}.
\begin{algorithm}[htbp]
\caption{Stochastic Negative Curvature Finding($\vect{x}_0,r_s,\mathscr{T}_s,m$).}
\label{algo:SNC-finding}
$\vect{y}_0\leftarrow0,\ L_0\leftarrow r_s$\;
\For{$t=1,...,\mathscr{T}_s$}{
Sample $\left\{\theta^{(1)},\theta^{(2)},\cdots,\theta^{(m)}\right\}\sim\mathcal{D}$\;
$\vect{g}(\vect{y}_{t-1})\leftarrow\frac{1}{m}\sum_{j=1}^m\big(\vect{g}(\vect{x}_0+\vect{y}_{t-1};\theta^{(j)})-\vect{g}(\vect{x}_0;\theta^{(j)})\big)$\;
$\vect{y}_t\leftarrow\vect{y}_{t-1}-\frac{1}{\ell}(\vect{g}(\vect{y}_{t-1})+\xi_t/L_{t-1}),\qquad\xi_t\sim\mathcal{N}\Big(0,\frac{r_s^2}{d}I\Big)$\;
$L_t\leftarrow\frac{\|\vect{y}_t\|}{r_s}L_{t-1}$ and $\vect{y}_t\leftarrow \vect{y}_t\cdot\frac{r_s}{\|\vect{y}_t\|}$\; \label{lin:y_t-renormalization-stochastic}
}
\textbf{Output} $\vect{y}_{\mathscr{T}}/r_s$.
\end{algorithm}
\vspace{-1mm}

Similar to the non-stochastic setting, \algo{SNC-finding} can be used to escape from saddle points and obtain a polynomial speedup in $\log n$ compared to PSGD algorithm in \cite{jin2019stochastic}. This is quantitatively shown in the following theorem:
\begin{theorem}[informal, full version deferred to \append{PSGD+SNC}]\label{thm:PSGD+SNC-Complexity}
For any $\epsilon>0$ and a constant $0<\delta\leq 1$, our algorithm\footnote{Our algorithm based on \algo{SNC-finding} has similarities to the Neon2$^{\text{online}}$ algorithm in \cite{allen2018neon2}. Both algorithms find a second-order stationary point for stochastic optimization in $\tilde{O}(\log^2 n/\epsilon^4)$ iterations, and we both apply directed perturbations based on the results of negative curvature finding. Nevertheless, our algorithm enjoys simplicity by only having a single loop, whereas Neon2$^{\text{online}}$ has a nested loop for boosting their confidence.} based on \algo{SNC-finding} using only stochastic gradient descent satisfies that at least one of the iterations $\vect{x}_{t}$ will be an $\epsilon$-approximate second-order stationary point in
\begin{align}
\tilde{O}\Big(\frac{(f(\vect{x}_{0})-f^{*})}{\epsilon^{4}}\cdot\log^2 n\Big)
\end{align}
iterations, with probability at least $1-\delta$, where $f^{*}$ is the global minimum of $f$.
\end{theorem}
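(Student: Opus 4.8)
The plan is to run the same three-regime accounting behind \thm{PAGD+ANC-Complexity}, now with stochastic gradients. At each iterate $\vect{x}_t$ exactly one of the following holds: \textbf{(i)} $\|\nabla f(\vect{x}_t)\|>\epsilon$; \textbf{(ii)} $\|\nabla f(\vect{x}_t)\|\le\epsilon$ but $\lambda_{\min}(\mathcal{H}(\vect{x}_t))<-\sqrt{\rho\epsilon}$; or \textbf{(iii)} $\|\nabla f(\vect{x}_t)\|\le\epsilon$ and $\lambda_{\min}(\mathcal{H}(\vect{x}_t))\ge-\sqrt{\rho\epsilon}$, i.e.\ $\vect{x}_t$ is an $\epsilon$-approximate second-order stationary point. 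I argue by contradiction: assume that none of the first $T=\tilde{O}\big((f(\vect{x}_0)-f^*)\log^2 n/\epsilon^4\big)$ iterates is of type (iii); I then show that each type-(i) step decreases $f$ by a definite amount in expectation and each visit to a type-(ii) point triggers an escape that decreases $f$ by $\Omega(\sqrt{\epsilon^3/\rho})$, so that the accumulated decrease drives $f$ below $f^*$ — impossible. Hence some $\vect{x}_t$ is $\epsilon$-second-order stationary, and (as in the Remark after \thm{PAGD+ANC-Complexity}) a termination test flags it.

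For the type-(i) \emph{(large gradient)} steps I reuse the PSGD descent bookkeeping of \cite{jin2019stochastic}: with step size $\tilde{\Theta}(\epsilon^2/(\ell\sigma^2))$, one SGD step at a type-(i) iterate decreases $f$ in conditional expectation by $\tilde{\Omega}(\epsilon^4/(\ell\sigma^2))$, and \assum{stochastic-variance} turns this into a Freedman-type supermartingale estimate, so that outside a $\delta/3$-event there are at most $\tilde{O}\big((f(\vect{x}_0)-f^*)\ell\sigma^2/\epsilon^4\big)$ type-(i) steps. For a type-(ii) \emph{(around a saddle)} iterate $\vect{x}_t$ — detected, up to another $\delta/3$-event, via mini-batch gradient estimates — the algorithm calls \algo{SNC-finding} from $\vect{x}_0=\vect{x}_t$ with the \append{PSGD+SNC} parameters: a tiny radius $r_s$, a mini-batch size $m$, and $\mathscr{T}_s$ inner steps, where $\mathscr{T}_s=\tilde{O}(\log n/\sqrt{\rho\epsilon})$, $m$ is polylogarithmic in $n$, and $\mathscr{T}_s\cdot m=\tilde{O}(\log^2 n)$ up to $\epsilon$-factors. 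Granting the stochastic negative-curvature guarantee below, with probability $\ge1-\delta_0$ the output $\hat{\vect{e}}$ obeys $\hat{\vect{e}}^{T}\mathcal{H}(\vect{x}_t)\hat{\vect{e}}\le-\sqrt{\rho\epsilon}/4$, so \lem{utilize-NC} gives a function-value drop of at least $\tfrac1{384}\sqrt{\epsilon^3/\rho}$ from the ensuing directed perturbation. Hence there are at most $N_{\mathrm{sad}}=O\big((f(\vect{x}_0)-f^*)\sqrt{\rho/\epsilon^3}\big)$ escapes, each costing $O(m\mathscr{T}_s)$ stochastic-gradient queries, i.e.\ at most $\tilde{O}\big((f(\vect{x}_0)-f^*)\log^2 n/\epsilon^4\big)$ in aggregate; setting $\delta_0=\delta/(3N_{\mathrm{sad}})$ and union-bounding makes all escapes succeed outside a $\delta/3$-event. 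Adding the phase budgets gives $T$, and a union bound over the three $\delta/3$-events closes the contradiction.

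The technical core — and the step I expect to be the main obstacle — is the stochastic analogue of \prop{NC-finding}: for any $\tilde{\vect{x}}$ with $\lambda_{\min}(\mathcal{H}(\tilde{\vect{x}}))\le-\sqrt{\rho\epsilon}$, \algo{SNC-finding} returns $\hat{\vect{e}}$ with $\hat{\vect{e}}^{T}\mathcal{H}(\tilde{\vect{x}})\hat{\vect{e}}\le-\sqrt{\rho\epsilon}/4$ with probability $\ge1-\delta_0$. I would re-run the skeleton of the proof of \prop{NC-finding}: shift $\tilde{\vect{x}}$ to $\vect{0}$, pass to $h_f(\vect{x})=f(\vect{x})-\langle\nabla f(\vect{0}),\vect{x}\rangle$ so that \eqn{gradient-approximation-0} holds, diagonalize $\mathcal{H}(\vect{0})=\sum_i\lambda_i\vect{u}_i\vect{u}_i^{T}$, set up the subspaces $\mathfrak{S}_{\parallel},\mathfrak{S}_{\perp},\mathfrak{S}_{\parallel}',\mathfrak{S}_{\perp}'$ and the ratio $\alpha_t=\|\vect{y}_{t,\parallel}\|/\|\vect{y}_t\|$, prove that some $t_0\le\mathscr{T}_s$ has $\|\vect{y}_{t_0,\perp'}\|/\|\vect{y}_{t_0}\|\le\sqrt{\rho\epsilon}/(8\ell)$, and close with the verbatim Rayleigh-quotient estimate. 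Three effects absent from the deterministic proof must be controlled. \emph{Seeding}: since $\vect{y}_0=\vect{0}$ and the first mini-batch gradient vanishes, the renormalization in \lin{y_t-renormalization-stochastic} makes $\vect{y}_1=-r_s\,\xi_1/\|\xi_1\|$ an exactly uniform point of the radius-$r_s$ sphere, so $\alpha_1\ge\delta_0\sqrt{\pi/n}$ with probability $\ge1-\delta_0$, playing the role that the uniform-ball start plays in \eqn{alpha0-probability}. \emph{Mini-batch error}: by \assum{stochastic-lipschitz} each summand $\vect{g}(\vect{x}_0+\vect{y}_{t-1};\theta^{(j)})-\vect{g}(\vect{x}_0;\theta^{(j)})$ has norm $\le\tilde{\ell}\,r_s$ almost surely, so a concentration inequality (with a union bound over the $\mathscr{T}_s$ steps) bounds $\vect{g}(\vect{y}_{t-1})-\nabla h_f(\vect{y}_{t-1})$, and — because this error is mean-zero and regenerated each step — a martingale argument shows its amplified accumulation over the $\mathscr{T}_s$ inner steps stays below the amplified $\mathfrak{S}_{\parallel}$-component once $m$ is polylogarithmic in $n$; this, together with the $\rho r_s^2$ bias from \eqn{gradient-approximation-0}, is what keeps the effective per-step deviation from the power map $I-\mathcal{H}(\vect{0})/\ell$ within the slack used in \prop{NC-finding}. \emph{Renormalization bookkeeping}: one checks that $L_t$ stays in a controlled window, so that the continued injection $\xi_t/L_{t-1}$ — comparable to the signal only for small $t$ — is itself amplified like the seed and never outgrows $\mathfrak{S}_{\parallel}$; passing to the un-renormalized variable $\vect{w}_t$ with $\vect{w}_t=(I-\mathcal{H}(\vect{0})/\ell)\vect{w}_{t-1}-\xi_t/(\ell r_s)+(\text{bias}+\text{error})$ makes this transparent. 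With these in hand the two-sided recursions of \prop{NC-finding} — an upper bound $(1+\tfrac58\sqrt{\rho\epsilon}/\ell)^{t-1}$ on $\|\vect{y}_{t,\perp'}\|$ and a lower bound $(1+\tfrac78\sqrt{\rho\epsilon}/\ell)^{t-1}$ on $\|\vect{y}_{t,\parallel}\|$, via a noisy version of the component lower bound \lem{GD-component-lowerbound} — force the required $t_0$. Running these three controls simultaneously over all $\mathscr{T}_s=\tilde{O}(\log n)$ inner steps under a single union bound, while keeping $m$ polylogarithmic so that $m\mathscr{T}_s=\tilde{O}(\log^2 n)$ up to $\epsilon$-factors, is the crux of the argument.
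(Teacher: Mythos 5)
Your top‑level accounting — splitting iterates into large‑gradient steps, saddle escapes, and $\epsilon$‑second‑order stationary points, budgeting each phase, and union‑bounding the failure events — matches the paper's proof of the full version \thm{PSGD+SNC-full} almost line for line, including the reuse of the PSGD descent lemma for the large‑gradient regime and \lem{utilize-NC} for the $\Omega(\sqrt{\epsilon^3/\rho})$ drop after each directed perturbation. (One cosmetic mismatch: the paper runs SGD with step size $1/\ell$ and mini‑batches of size $M=\Theta(\ell\Delta_f/\epsilon^{2})$ rather than a small step size $\tilde\Theta(\epsilon^{2}/(\ell\sigma^{2}))$; either bookkeeping gives the same $\tilde O(1/\epsilon^{4})$ order.)

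Where you genuinely diverge from the paper — and where the proposal has a gap — is the proof of the stochastic negative‑curvature guarantee (\prop{SNC-finding}). You propose to replay the deterministic skeleton: seed once from the first injected Gaussian (so $\alpha_1\ge\delta_0\sqrt{\pi/n}$), then prove a \emph{noisy version of \lem{GD-component-lowerbound}} maintaining $\alpha_t\ge\alpha_{\min}$ for all $t\le\mathscr{T}_s$, and finally run the two‑sided exponential recursions. The paper does \emph{not} attempt to maintain such an invariant, and for good reason: the injected noise $\xi_t/L_{t-1}$ and the stochastic‑gradient error $\zeta_t$ are not a.s.\ bounded by $\rho r_s\|\vect{y}_t\|$ the way the deterministic deviation $\Delta$ is, so the per‑step inequalities in \lem{GD-component-lowerbound} no longer hold; in early iterations the fresh noise is comparable to $\vect{y}_t$ and can push $\alpha_t$ below any prescribed $\alpha_{\min}$, breaking the induction. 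What the paper does instead (\append{SNC-finding}) is pass to the un‑renormalized sequence $\vect{z}_t$, decompose it as $\vect{z}_t=-\vect{q}_h(t)-\vect{q}_{sg}(t)-\vect{q}_p(t)$ (\lem{CS-dynamics}), and show that the \emph{accumulated} injected Gaussian $\vect{q}_p(t)=\eta\sum_{\tau}(I-\eta\tilde{\mathcal{H}})^{t-1-\tau}\xi_{\tau}$ dominates $\vect{q}_h+\vect{q}_{sg}$ (\lem{qp-dominating}). Because $\vect{q}_p(t)$ is a sum of independent Gaussians, both its lower bound along $\vect{u}_1$ (anti‑concentration, \lem{stochastic-qp-norm}) and its upper bound in $\mathfrak{S}_{\perp}'$ are controlled by explicit rates $\beta(t)=(1+\eta\gamma)^t/\sqrt{2\eta\gamma}$ versus $\beta_{\perp'}(t)=(1+\eta\gamma/2)^t/\sqrt{\eta\gamma}$, and the $\log n/(\eta\gamma)$‑step separation is read off from their ratio. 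In short, the paper treats the \emph{continuously} injected Gaussian noise as the driving signal and never needs a uniform‑in‑$t$ lower bound on $\alpha_t$; your proposal treats the post‑seed noise as a perturbation to be absorbed, which is exactly the step that fails. You would need to replace the ``noisy \lem{GD-component-lowerbound}'' step with the $\vect{q}_h,\vect{q}_{sg},\vect{q}_p$ decomposition and the dominance argument for the proposal to go through.
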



\section{Numerical experiments}\label{sec:numerical}

In this section, we provide numerical results that exhibit the power of our negative curvature finding algorithm for escaping saddle points. More experimental results can be found in \append{more-numerics}. All the experiments are performed on MATLAB R2019b on a computer with Six-Core Intel Core i7 processor and 16GB memory, and their codes are given in the supplementary material.
\vspace{-1mm}

\paragraph{Comparison between \algo{NC-finding} and PGD.} We compare the performance of our \algo{NC-finding} with the perturbed gradient descent (PGD) algorithm in \cite{jin2019stochastic} on a test function $f(x_1,x_2)=\frac{1}{16}x_1^4-\frac{1}{2}x_1^2+\frac{9}{8}x_2^2$ with a saddle point at $(0,0)$. The advantage of \algo{NC-finding} is illustrated in \fig{quartic_descent}.

\begin{figure}[htbp]
\centering\includegraphics[width = 12.7cm]{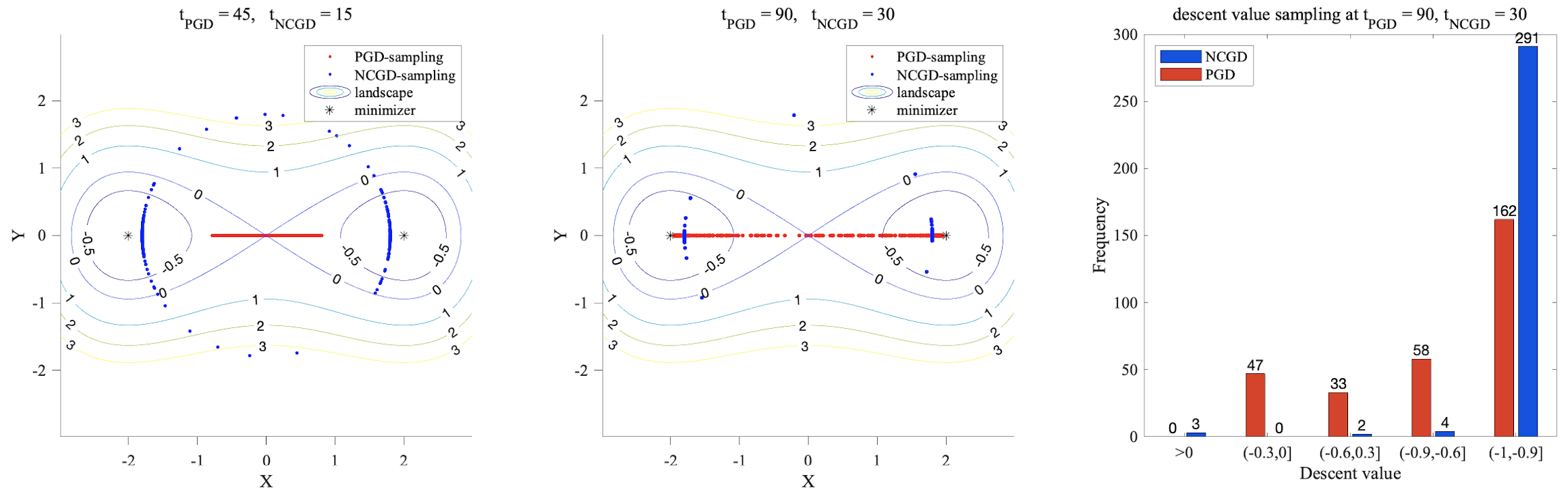}
\caption{Run \algo{NC-finding} and PGD on landscape $f(x_1,x_2)=\frac{1}{16}x_1^4-\frac{1}{2}x_1^2+\frac{9}{8}x_2^2$. Parameters: $\eta =0.05$ (step length), $r = 0.1$ (ball radius in PGD and parameter $r$ in \algo{NC-finding}), $M = 300$ (number of samplings).\\
\textbf{Left}: The contour of the landscape is placed on the background with labels being function values. Blue points represent samplings of \algo{NC-finding} at time step $t_{\text{NCGD}}=15$ and $t_{\text{NCGD}}=30$, and red points represent samplings of PGD at time step $t_{\text{PGD}}=45$ and $t_{\text{PGD}}=90$. \algo{NC-finding} transforms an initial uniform-circle distribution into a distribution concentrating on two points indicating negative curvature, and these two figures represent intermediate states of this process. It converges faster than PGD even when $t_{\text{NCGD}}\ll t_{\text{PGD}}$.\\
\textbf{Right}: A histogram of descent values obtained by \algo{NC-finding} and PGD, respectively. Set $t_{\text{NCGD}} = 30$ and $t_{\text{PGD}} = 90$. Although we run three times of iterations in PGD, there are still over $40\%$ of gradient descent paths with function value decrease no greater than $0.9$, while this ratio for \algo{NC-finding} is less than $5\%$.}
\label{fig:quartic_descent}
\end{figure}
\vspace{-2mm}

\paragraph{Comparison between \algo{SNC-finding} and PSGD.} We compare the performance of our \algo{SNC-finding} with the perturbed stochastic gradient descent (PSGD) algorithm in \cite{jin2019stochastic} on a test function $f(x_1,x_2)=(x_1^3-x_2^3)/2-3x_1x_2+(x_1^2+x_2^2)^2/2$. Compared to the landscape of the previous experiment, this function is more deflated from a quadratic field due to the cubic terms. Nevertheless, \algo{SNC-finding} still possesses a notable advantage compared to PSGD as demonstrated in \fig{stochastic_cubic_descent}.
\begin{figure}[htbp]
\centering\includegraphics[width = 12.7cm]{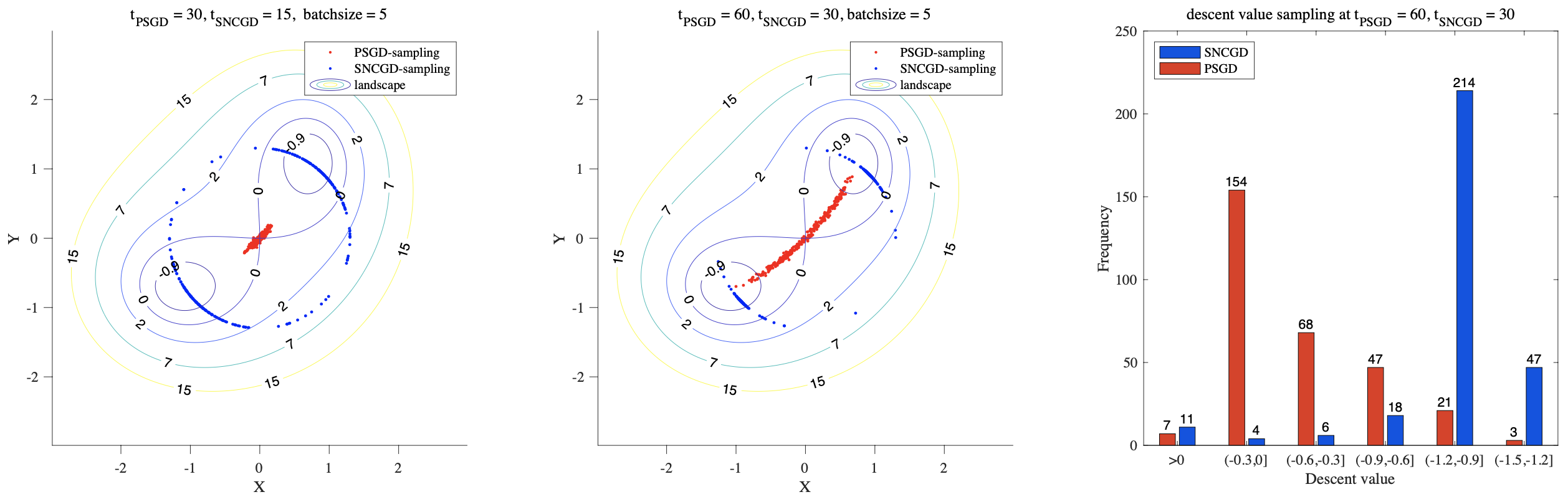}
\caption{Run \algo{SNC-finding} and PSGD on landscape $f(x_1,x_2)=\frac{x_1^3-x_2^3}{2}-3x_1x_2+\frac{1}{2}(x_1^2+x_2^2)^2$. Parameters: $\eta =0.02$ (step length), $r = 0.01$ (variance in PSGD and $r_s$ in \algo{SNC-finding}), $M = 300$ (number of samplings).\\
\textbf{Left}: The contour of the landscape is placed on the background with labels being function values. Blue points represent samplings of \algo{SNC-finding} at time step $t_{\text{SNCGD}}=15$ and $t_{\text{SNCGD}}=30$, and red points represent samplings of PSGD at time step $t_{\text{PSGD}}=30$ and $t_{\text{PSGD}}=60$. \algo{SNC-finding} transforms an initial uniform-circle distribution into a distribution concentrating on two points indicating negative curvature, and these two figures represent intermediate states of this process. It converges faster than PSGD even when $t_{\text{SNCGD}}\ll t_{\text{PSGD}}$.\\
\textbf{Right}: A histogram of descent values obtained by \algo{SNC-finding} and PSGD, respectively. Set $t_{\text{SNCGD}} = 30$ and $t_{\text{PSGD}} = 60$. Although we run two times of iterations in PSGD, there are still over $50\%$ of SGD paths with function value decrease no greater than $0.6$, while this ratio for \algo{SNC-finding} is less than $10\%$.}
\label{fig:stochastic_cubic_descent}
\end{figure}


\section*{Acknowledgements}
We thank Jiaqi Leng for valuable suggestions and generous help on the design of numerical experiments. TL was supported by the NSF grant PHY-1818914 and a Samsung Advanced Institute of Technology Global Research Partnership.


\small


\appendix
\newpage
\normalsize

\section{Auxiliary lemmas}\label{append:existing-lemma}
In this section, we introduce auxiliary lemmas that are necessary for our proofs.
\begin{lemma}[{\cite[Lemma 19]{jin2019stochastic}}]\label{lem:descent-lemma}
If $f(\cdot)$ is $\ell$-smooth and $\rho$-Hessian Lipschitz, $\eta=1/\ell$, then the gradient descent sequence $\{\vect{x}_{t}\}$ obtained by $\vect{x}_{t+1}:=\vect{x}_t-\eta\nabla f(\vect{x}_t)$ satisfies:
\begin{align}
f(\vect{x}_{t+1})-f(\vect{x}_{t})\leq \eta \|\nabla f(\vect{x})\|^{2}/2,
\end{align}
for any step $t$ in which Negative Curvature Finding is not called.
\end{lemma}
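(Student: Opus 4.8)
The plan is to apply the standard \emph{descent lemma} argument: $\ell$-smoothness of $f$ furnishes a quadratic upper bound on $f$, and substituting the gradient-descent update into that bound produces the stated one-step decrease. Note that the $\rho$-Hessian Lipschitz hypothesis plays no role in this particular statement; it appears only because the lemma is quoted verbatim from~\cite{jin2019stochastic}. (The displayed inequality should read $f(\vect{x}_{t+1})-f(\vect{x}_{t})\leq -\eta\|\nabla f(\vect{x}_t)\|^2/2$, with the negative sign and the gradient evaluated at $\vect{x}_t$.)

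First I would derive the quadratic upper bound directly from the first-order Lipschitz condition, without invoking twice-differentiability. For any $\vect{x},\vect{y}\in\mathbb{R}^n$, write $f(\vect{y})-f(\vect{x})=\int_{0}^{1}\langle\nabla f(\vect{x}+\tau(\vect{y}-\vect{x})),\vect{y}-\vect{x}\rangle\,\d\tau$, split off the $\tau=0$ contribution, and bound the remainder using $\|\nabla f(\vect{x}+\tau(\vect{y}-\vect{x}))-\nabla f(\vect{x})\|\leq\ell\tau\|\vect{y}-\vect{x}\|$ together with Cauchy--Schwarz. Integrating $\ell\tau$ over $[0,1]$ then yields $f(\vect{y})\leq f(\vect{x})+\langle\nabla f(\vect{x}),\vect{y}-\vect{x}\rangle+\tfrac{\ell}{2}\|\vect{y}-\vect{x}\|^{2}$.

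Second, I would specialize this bound to the gradient step. Since the lemma concerns a step $t$ at which Negative Curvature Finding is not invoked, the update is the pure gradient step $\vect{x}_{t+1}=\vect{x}_t-\eta\nabla f(\vect{x}_t)$. Taking $\vect{y}=\vect{x}_{t+1}$ and $\vect{x}=\vect{x}_t$ in the quadratic bound gives $f(\vect{x}_{t+1})\leq f(\vect{x}_t)-\eta\|\nabla f(\vect{x}_t)\|^{2}+\tfrac{\ell\eta^{2}}{2}\|\nabla f(\vect{x}_t)\|^{2}=f(\vect{x}_t)-\eta\bigl(1-\tfrac{\ell\eta}{2}\bigr)\|\nabla f(\vect{x}_t)\|^{2}$. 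Substituting $\eta=1/\ell$ makes $1-\tfrac{\ell\eta}{2}=\tfrac12$, which gives $f(\vect{x}_{t+1})-f(\vect{x}_t)\leq-\tfrac{\eta}{2}\|\nabla f(\vect{x}_t)\|^{2}$, as claimed.

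There is essentially no genuine obstacle here; the only points requiring care are (i) obtaining the quadratic upper bound from the gradient-Lipschitz condition via the fundamental theorem of calculus rather than from a Hessian bound, so that the argument does not secretly use smoothness in a stronger sense than assumed, and (ii) checking that on steps where the NCE subroutine is not triggered the iterate indeed obeys the plain recursion $\vect{x}_{t+1}=\vect{x}_t-\eta\nabla f(\vect{x}_t)$, so that the bound is applicable. Both are routine.
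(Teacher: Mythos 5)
Your proof is correct and is precisely the standard descent-lemma argument; the paper itself states this as a cited auxiliary result (\cite[Lemma 19]{jin2019stochastic}) and does not reprove it, so there is no alternative route to compare against. Your observation that the displayed inequality carries a sign typo and should read $f(\vect{x}_{t+1})-f(\vect{x}_{t})\leq -\eta\|\nabla f(\vect{x}_t)\|^{2}/2$ (with the gradient at $\vect{x}_t$, not $\vect{x}$) is also correct, as is the remark that $\rho$-Hessian Lipschitzness is not used in this particular step.
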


\begin{lemma}[{\cite[Lemma 23]{jin2019stochastic}}]\label{lem:stochastic-descent-lemma}
For a $\ell$-smooth and $\rho$-Hessian Lipschitz function $f$ with its stochastic gradient satisfying \assum{stochastic-variance}, there exists an absolute constant $c$ such that, for any fixed $t,t_0,\iota>0$, with probability at least $1-4e^{\iota}$, the stochastic gradient descent sequence in \algo{SGD+NC} satisfies:
\begin{align}
f(\vect{x}_{t_0+t})-f(\vect{x}_{t_0})\leq-\frac{\eta}{8}\sum_{i=0}^{t-1}\|\nabla f(\vect{x}_{t_0+i})\|^2+c\cdot\frac{\sigma^2}{\ell}(t+\iota),
\end{align}
if during $t_0\sim t_0+t$, Stochastic Negative Curvature Finding has not been called.
\end{lemma}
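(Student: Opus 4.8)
This lemma is essentially Lemma~23 of~\cite{jin2019stochastic}, and the plan is to reproduce its proof: run the standard stochastic descent argument step by step and then concentrate the two noise-dependent sums that appear. Over an interval $[t_0,t_0+t]$ on which only plain SGD updates $\vect{x}_{i+1}=\vect{x}_i-\eta\,\vect{g}(\vect{x}_i)$ are taken (no Stochastic Negative Curvature Finding call), write $\zeta_i:=\vect{g}(\vect{x}_{t_0+i})-\nabla f(\vect{x}_{t_0+i})$ for the zero-mean noise. Applying $\ell$-smoothness, $f(\vect{x}_{i+1})\le f(\vect{x}_i)+\langle\nabla f(\vect{x}_i),\vect{x}_{i+1}-\vect{x}_i\rangle+\tfrac{\ell}{2}\|\vect{x}_{i+1}-\vect{x}_i\|^2$, plugging in $\vect{x}_{i+1}-\vect{x}_i=-\eta(\nabla f(\vect{x}_i)+\zeta_i)$ with $\eta\le 1/\ell$, expanding the square, and telescoping over $i=0,\dots,t-1$ gives
\begin{align}
f(\vect{x}_{t_0+t})-f(\vect{x}_{t_0})\ \le\ -\tfrac{\eta}{2}\,S\ -\ \eta(1-\ell\eta)\,M_t\ +\ \tfrac{\ell\eta^2}{2}\sum_{i=0}^{t-1}\|\zeta_i\|^2 ,
\end{align}
where $S:=\sum_{i=0}^{t-1}\|\nabla f(\vect{x}_{t_0+i})\|^2$ and $M_t:=\sum_{i=0}^{t-1}\langle\nabla f(\vect{x}_{t_0+i}),\zeta_i\rangle$; here I used $1-\tfrac{\ell\eta}{2}\ge\tfrac12$. (For the particular choice $\eta=1/\ell$ the coefficient $1-\ell\eta$ vanishes and $M_t$ drops out, but in the stochastic regime one typically takes $\eta\ll 1/\ell$, so the cross term must be handled.) It then remains to bound the two stochastic quantities $M_t$ and $\sum_i\|\zeta_i\|^2$, each on an event of probability $\ge 1-2e^{-\iota}$, and union-bound.

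For the martingale term: $\E[\zeta_i\mid\mathcal F_i]=0$ and $\nabla f(\vect{x}_{t_0+i})$ is $\mathcal F_i$-measurable, so $(M_k)$ is a martingale, and by $|\langle\vect{a},\zeta_i\rangle|\le\|\vect{a}\|\,\|\zeta_i\|$ together with \assum{stochastic-variance}, each increment is conditionally $\big(\|\nabla f(\vect{x}_{t_0+i})\|\,\sigma\big)$-subGaussian, crucially \emph{without} any dimension factor — this is exactly why the noise assumption is stated in norm-subGaussian rather than coordinate-wise form. A Hoeffding/Azuma-type inequality for subGaussian martingale differences with adapted variance proxy, combined with a peeling (stratification) argument over dyadic ranges of $S$ to absorb the randomness of the proxy, then yields $|M_t|\le C\sigma\sqrt{\iota\,S}$ with probability $\ge 1-2e^{-\iota}$ (the logarithmic peeling overhead folded into a redefined $\iota$). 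Young's inequality gives $\eta|M_t|\le\tfrac{\eta}{8}S+2\eta C^2\sigma^2\iota$, and since $\eta=\Theta(1/\ell)$ the residual is $O(\sigma^2\iota/\ell)$; together with the deterministic $-\tfrac{\eta}{2}S$ this leaves a net coefficient at most $-\tfrac{\eta}{8}$ on $S$.

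For the noise-norm term: integrating the tail in \assum{stochastic-variance} gives $\E[\|\zeta_i\|^2\mid\mathcal F_i]\le 4\sigma^2$ and shows $\|\zeta_i\|^2$ is conditionally sub-exponential with parameter $O(\sigma^2)$ — again dimension-free. A Bernstein-type bound for sums of conditionally sub-exponential random variables then gives $\sum_{i=0}^{t-1}\|\zeta_i\|^2\le c'\sigma^2(t+\iota)$ with probability $\ge 1-2e^{-\iota}$, so $\tfrac{\ell\eta^2}{2}\sum_i\|\zeta_i\|^2=O(\sigma^2(t+\iota)/\ell)$ using $\ell\eta^2=\Theta(1/\ell)$. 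A union bound over the two events (total failure probability $\le 4e^{-\iota}$) and collecting all residuals into a single absolute constant $c$ produces the claimed inequality. I expect the main obstacle to be the martingale step: one must invoke a concentration bound that permits the variance proxy $\|\nabla f(\vect{x}_{t_0+i})\|^2\sigma^2$ to be random and $\mathcal F_i$-adapted (hence the peeling), and one must check at every step that nothing secretly introduces a $\poly(n)$ factor — which, given the norm-subGaussian formulation of \assum{stochastic-variance}, it does not.
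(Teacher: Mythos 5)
The paper itself does not contain a proof of this lemma: it is imported verbatim as an auxiliary result from \cite[Lemma~23]{jin2019stochastic} and placed in Appendix~A without any accompanying argument, so there is no in-paper proof to compare against. Your reconstruction is nevertheless essentially correct and follows the standard route behind the cited lemma: the smoothness-based one-step descent and telescoping that give $f(\vect{x}_{t_0+t})-f(\vect{x}_{t_0})\le-\tfrac{\eta}{2}S-\eta(1-\ell\eta)M_t+\tfrac{\ell\eta^2}{2}\sum_i\|\zeta_i\|^2$ in your notation; a self-normalized (peeled) martingale concentration for the cross term $M_t$, using that \assum{stochastic-variance} is a \emph{norm}-subGaussian hypothesis so that each increment $\langle\nabla f(\vect{x}_{t_0+i}),\zeta_i\rangle$ is conditionally subGaussian with proxy $\|\nabla f(\vect{x}_{t_0+i})\|\sigma$ and no dimension factor enters; a Bernstein-type bound for the sub-exponential sum $\sum_i\|\zeta_i\|^2$; Young's inequality to trade the $\sqrt{S}$ martingale tail into a $\tfrac{\eta}{8}S$ term; and a two-event union bound yielding the $1-4e^{-\iota}$ probability (the ``$1-4e^{\iota}$'' in the printed statement is evidently a sign typo). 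One small remark worth flagging: \algo{SGD+NC} fixes the step size to $\eta=1/\ell$, so $(1-\ell\eta)=0$ and the cross-term martingale disappears identically, meaning the proof in this paper's setting could be carried out with a single concentration event. Your decision to handle general $\eta\le 1/\ell$ and keep the martingale piece is harmless and in fact matches the level of generality in which Jin et al.\ state and prove Lemma~23.
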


\begin{lemma}[{\cite[Lemma 29]{jin2019stochastic}}]\label{lem:stochastic-ab}
Denote $\alpha(t):=\Big[\sum_{\tau=0}^{t-1}(1+\eta\gamma)^{2(t-1-\tau)}\Big]^{1/2}$ and $\beta(t):=(1+\eta\gamma)^t/\sqrt{2\eta\gamma}$. If $\eta\gamma\in[0,1]$, then we have
\begin{enumerate}
\item $\alpha(t)\leq\beta(t)$ for any $t\in\mathbb{N}$;
\item$\alpha(t)\geq\beta(t)/\sqrt{3},\quad\forall t\geq\ln 2/(\eta\gamma)$.
\end{enumerate}
\end{lemma}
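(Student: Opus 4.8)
The plan is to reduce both parts to elementary estimates in the single variable $a := \eta\gamma$; throughout I would assume $a \in (0,1]$, since for $a=0$ the statement is degenerate ($\beta(t)$ and the threshold $\ln 2/(\eta\gamma)$ are both infinite). First I would re-index the sum defining $\alpha(t)$: setting $j = t-1-\tau$ turns it into a finite geometric series with ratio $(1+a)^2$, so that
\begin{align}
\alpha(t)^2 = \sum_{j=0}^{t-1}(1+a)^{2j} = \frac{(1+a)^{2t}-1}{(1+a)^2-1} = \frac{(1+a)^{2t}-1}{a(2+a)},
\end{align}
whereas $\beta(t)^2 = (1+a)^{2t}/(2a)$ directly from the definition. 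Everything afterwards is a comparison of these two closed forms.

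For part (i), after multiplying through by $a>0$ and cross-multiplying by the positive quantities $2$ and $2+a$, the inequality $\alpha(t)^2 \le \beta(t)^2$ becomes $2[(1+a)^{2t}-1] \le (2+a)(1+a)^{2t}$, i.e.\ $-2 \le a(1+a)^{2t}$, which is trivially true for every $a\ge 0$ and every $t$; so part (i) needs no hypothesis on $a$ at all.

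For part (ii), the same manipulations turn $\alpha(t)^2 \ge \beta(t)^2/3$ into $6[(1+a)^{2t}-1] \ge (2+a)(1+a)^{2t}$, i.e.\ into $(4-a)(1+a)^{2t} \ge 6$. Since $a\le 1$ gives $4-a\ge 3$, it suffices to establish $(1+a)^{2t}\ge 2$, equivalently $t\ln(1+a)\ge \tfrac12\ln 2$. At this point I would invoke the elementary bound $\ln(1+a)\ge a/2$ on $[0,1]$ --- the function $a\mapsto \ln(1+a)-a/2$ vanishes at $0$ and has derivative $(1-a)/(2(1+a))\ge 0$ there --- together with the hypothesis $t\ge \ln 2/a$, to conclude $t\ln(1+a)\ge (\ln 2/a)\cdot(a/2)=\tfrac12\ln 2$, as required.

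There is no genuine obstacle: this is a routine computation once the closed form in the first display is in hand. The only steps that take a little thought are (a) spotting that after clearing denominators part (i) collapses to a triviality and part (ii) to the single inequality $(4-a)(1+a)^{2t}\ge 6$; (b) using the crude bound $4-a\ge 3$, which is precisely what produces the constant $3$ and hence the $\sqrt 3$ in the statement; and (c) the logarithm inequality $\ln(1+a)\ge a/2$ on $[0,1]$, which is exactly what converts the threshold $t\ge \ln 2/(\eta\gamma)$ into $(1+a)^{2t}\ge 2$. The degenerate case $\eta\gamma=0$ is worth excluding explicitly at the start.
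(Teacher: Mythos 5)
Your proof is correct. Note, however, that the paper does not prove this lemma at all: it is quoted verbatim from Jin et al.\ (JACM 2021, Lemma 29) in the auxiliary-lemma appendix, so there is no in-paper proof to compare against. Your argument --- summing the geometric series to get $\alpha(t)^2 = \bigl((1+a)^{2t}-1\bigr)/\bigl(a(2+a)\bigr)$, clearing denominators, and for part (ii) using $4-a\ge 3$ together with $\ln(1+a)\ge a/2$ on $[0,1]$ --- is the natural elementary route and matches what one would expect the cited source to do. The one small remark worth making is that part (ii) as stated is vacuous rather than false at $\eta\gamma=0$ (the threshold $\ln 2/(\eta\gamma)$ is $+\infty$), so the lemma's hypothesis $\eta\gamma\in[0,1]$ is harmless even though your calculation tacitly assumes $\eta\gamma>0$.
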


\begin{lemma}[{\cite[Lemma 30]{jin2019stochastic}}]\label{lem:stochastic-qp-norm}
Under the notation of \lem{stochastic-ab} and \append{SNC-finding}, letting $-\gamma:=\lambda_{\min}(\tilde{\mathcal{H}})$, for any $0\leq t\leq\mathscr{T}_s$ and $\iota>0$ we have
\begin{align}
\Pr\Big(\|\vect{q}_p(t)\|\leq\beta(t)\eta r_s\cdot\sqrt{\iota}\Big)\geq1-2e^{-\iota},
\end{align}
and
\begin{align}
&\Pr\Big(\|\vect{q}_p(t)\|\geq\frac{\beta(t)\eta r_s}{10\sqrt{n}}\cdot\frac{\delta}{\mathscr{T}_s}\Big)\geq 1-\frac{\delta}{\mathscr{T}_s},\\
&\Pr\Big(\|\vect{q}_p(t)\|\geq\frac{\beta(t)\eta r_s}{10\sqrt{n}}\cdot\delta\Big)\geq 1-\delta.
\end{align}
\end{lemma}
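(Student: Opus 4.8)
The plan is first to unpack, from the notation of \append{SNC-finding}, that $\vect q_p(t)$ is the projection onto the minimum-eigenvalue ($-\gamma$) eigenspace of $\tilde{\mathcal H}$ of the \emph{noise-driven} part of the linearized \algo{SNC-finding} recursion. Writing the update $\vect y_t\leftarrow\vect y_{t-1}-\frac1\ell(\vect g(\vect y_{t-1})+\xi_t/L_{t-1})$ together with the renormalization $L_t\leftarrow\frac{\|\vect y_t\|}{r_s}L_{t-1}$, $\vect y_t\leftarrow\vect y_t\cdot\frac{r_s}{\|\vect y_t\|}$, one passes to the renormalization-free trajectory and splits it into a ``signal'' part driven by the approximation $\vect g(\vect y_{t-1})\approx\tilde{\mathcal H}\vect y_{t-1}$ and a ``noise'' part driven purely by the injected isotropic Gaussians $\xi_\tau\sim\mathcal N(0,\tfrac{r_s^2}{n}I)$; the latter, projected onto the $-\gamma$-eigenspace, has the explicit form $\vect q_p(t)=\eta\sum_{\tau=1}^t(1+\eta\gamma)^{t-\tau}(\xi_\tau)_p$ with $\eta=1/\ell$ (up to whatever exact normalization \append{SNC-finding} adopts, which only changes constants below). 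The essential structural fact is that $\vect q_p(t)$ carries \emph{only} the injected noise --- the stochastic-gradient error and the deviation from the quadratic approximation are booked into other terms and bounded separately.

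Granting this, the proof reduces to Gaussian concentration and anti-concentration together with \lem{stochastic-ab}. As a sum of independent Gaussians, $\vect q_p(t)$ is centered Gaussian on the $-\gamma$-eigenspace with covariance $\eta^2\tfrac{r_s^2}{n}\big(\sum_{\tau=1}^t(1+\eta\gamma)^{2(t-\tau)}\big)\Pi_p=\eta^2\tfrac{r_s^2}{n}\alpha(t)^2\Pi_p$, where $\alpha(t)^2=\sum_{j=0}^{t-1}(1+\eta\gamma)^{2j}$ is exactly the quantity of \lem{stochastic-ab} and $\Pi_p$ is the orthogonal projector onto that eigenspace; equivalently, each coordinate of $\vect q_p(t)$ in an orthonormal basis of the eigenspace is an independent $\mathcal N(0,\sigma_t^2)$ with $\sigma_t=\eta r_s\alpha(t)/\sqrt n$. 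For the upper bound I would use a standard Gaussian norm tail, $\Pr(\|\vect q_p(t)\|\ge\sigma_t(\sqrt k+\sqrt{2\iota}))\le e^{-\iota}$ (with $k$ the eigenspace dimension), and invoke $\alpha(t)\le\beta(t)$ from \lem{stochastic-ab}(1) so that $\sigma_t\le\eta r_s\beta(t)/\sqrt n$; since $k\le n$, the stated threshold $\beta(t)\eta r_s\sqrt\iota$ dominates $\sigma_t(\sqrt k+\sqrt{2\iota})$ in the regime where the claim is nonvacuous ($\iota\gtrsim 1$, $n\ge 2$), giving failure probability at most $2e^{-\iota}$. For the two lower bounds I would use Gaussian small-ball: $\Pr(\|\vect q_p(t)\|\le a)\le\Pr(|\text{one coord.}|\le a)\le\sqrt{2/\pi}\,a/\sigma_t\le a/\sigma_t$; taking $a=\tfrac{\beta(t)\eta r_s}{10\sqrt n}\cdot\tfrac{\delta}{\mathscr{T}_s}$ makes the failure probability $\tfrac{\beta(t)}{10\alpha(t)}\cdot\tfrac{\delta}{\mathscr{T}_s}$, which is $\le\delta/\mathscr{T}_s$ because $\beta(t)/\alpha(t)\le\sqrt3<10$ for $t\ge\ln2/(\eta\gamma)$ by \lem{stochastic-ab}(2), while for the finitely many smaller $t$ one bounds $\beta(t)/\alpha(t)$ crudely using $\alpha(t)\ge\alpha(1)=1$ and the fact that $\mathscr{T}_s$ chosen in \append{SNC-finding} is polynomially large in $1/\sqrt{\eta\gamma}$, which absorbs the gap. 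The third inequality is the same computation with $\delta$ in place of $\delta/\mathscr{T}_s$.

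The main obstacle is the bookkeeping in the first paragraph rather than any hard estimate: one must verify that the renormalization steps, the finite-$m$ averaging of stochastic gradients, and the error in approximating $\vect g(\vect y)$ by $\tilde{\mathcal H}\vect y$ can all be isolated so that $\vect q_p(t)$ is \emph{exactly} Gaussian with the clean variance $\eta^2 r_s^2\alpha(t)^2/n$, after which everything is one line. The one other delicate point is matching the $\beta(t)$ that appears in the stated thresholds with the true Gaussian scale (which is $\propto\alpha(t)$): these agree up to $\sqrt3$ once $t\gtrsim 1/(\eta\gamma)$ but can be much larger for the first few steps, which is exactly why the polynomially large choice of $\mathscr{T}_s$ must be brought in for those $t$.
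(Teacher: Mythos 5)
The paper does not give a proof of this lemma at all: it is imported verbatim from \cite[Lemma~30]{jin2019stochastic} as an auxiliary fact, so there is no in-paper argument to compare against. Evaluating your attempt on its own terms, the overall plan is the right one and matches the original source's strategy: recognize $\vect q_p(t)$ as a centered Gaussian built from the injected noise $\xi_\tau\sim\mathcal N(0,\tfrac{r_s^2}{n}I)$, compute its covariance using the geometric weights $(1-\eta\lambda_i)^{2(t-1-\tau)}$ so that the dominant scale is $\eta^2 r_s^2\alpha(t)^2/n$ along the $\lambda_{\min}$ direction, and then get the upper tail from $\chi^2$ concentration together with $\alpha(t)\le\beta(t)$ (\lem{stochastic-ab}(1)) and the lower tail from one-dimensional Gaussian anti-concentration along $\vect u_1$ together with $\alpha(t)\ge\beta(t)/\sqrt3$ (\lem{stochastic-ab}(2)). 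One small mismatch: in this paper's own notation (\lem{CS-dynamics}), $\vect q_p(t)$ is the \emph{full} vector $\eta\sum_{\tau}(I-\eta\tilde{\mathcal H})^{t-1-\tau}\xi_\tau$, not a projection onto the $-\gamma$ eigenspace; this costs you nothing for the lower bound (you project onto $\vect u_1$ anyway) and is handled for the upper bound by the crude bound $\mathrm{Tr}(\Sigma)\le n\cdot\max_i\Sigma_{ii}=\eta^2 r_s^2\alpha(t)^2$, which you essentially use.

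The genuine gap is in your treatment of small $t$ for the two lower bounds. Your small-ball estimate gives failure probability at most $\sqrt{2/\pi}\cdot\tfrac{\beta(t)}{10\,\alpha(t)}\cdot\tfrac{\delta}{\mathscr T_s}$ (resp.\ with $\delta$ in place of $\delta/\mathscr T_s$), so what you need is $\beta(t)/\alpha(t)\le 10\sqrt{\pi/2}\approx 12.5$. For $t\ge\ln 2/(\eta\gamma)$ this follows from \lem{stochastic-ab}(2). But for $t<\ln 2/(\eta\gamma)$, and in particular $t=1$, you have $\alpha(1)=1$ and $\beta(1)=(1+\eta\gamma)/\sqrt{2\eta\gamma}\approx 1/\sqrt{2\eta\gamma}$, so $\beta(1)/\alpha(1)$ blows up as $\eta\gamma\to 0$ --- and in the parameter regime of this paper $\eta\gamma=\sqrt{\rho\epsilon}/\ell$ is small, so the needed constant bound is violated. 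Your proposed patch, that ``$\mathscr T_s$ is polynomially large in $1/\sqrt{\eta\gamma}$ and absorbs the gap,'' does not work: the lemma is a per-$t$ statement, so $\mathscr T_s$ only appears as a factor shrinking the threshold $a$, which shrinks the small-ball probability proportionally; it does not provide any slack against a large ratio $\beta(t)/\alpha(t)$. (Concretely at $t=1$ the failure probability is $\asymp\sqrt{\eta\gamma}\cdot\delta$, whereas $\delta/\mathscr T_s\asymp\eta\gamma\cdot\delta$, so the bound is off by $1/\sqrt{\eta\gamma}$.) Indeed the statement as written is already vacuously false at $t=0$, where $\vect q_p(0)=0$. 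The correct reading is that the lower bounds are only meaningful for $t\gtrsim 1/(\eta\gamma)$ --- which is the only place this paper actually applies them, namely at $t=\mathscr T_s$ in \lem{qp-dominating} --- so your proof should be scoped to that regime and should not claim to cover all $t\le\mathscr T_s$.
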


\begin{definition}[{\cite[Definition 32]{jin2019stochastic}}]\label{defn:nSG}
A random vector $\vect{X}\in\mathbb{R}^n$ is norm-subGaussian (or nSG($\sigma$)), if there exists $\sigma$ so that:
\begin{align}
\Pr(\|\vect{X}-\mathbb{E}[\vect{X}]\|\geq t)\leq 2e^{-\frac{t^2}{2\sigma^2}},\quad\forall t\in\mathbb{R}.
\end{align}
\end{definition}

\begin{lemma}[{\cite[Lemma 37]{jin2019stochastic}}]\label{lem:stochastic-contraction-37}
Given i.i.d. $\vect{X}_1,\ldots,\vect{X}_{\tau}\in\mathbb{R}^n$ all being zero-mean nSG$(\sigma_i)$ defined in \defn{nSG}, then for any $\iota>0$, and $B>b>0$, there exists an absolute constant $c$ such that, with probability at least $1-2n\log(B/b)\cdot e^{-\iota}$:
\begin{align}
\sum_{i=1}^n\sigma_i^2\geq B\quad\text{or}\quad\big\|\sum_{i=1}^{\tau}\vect{X}_i\big\|\leq c\cdot\sqrt{\max\left\{\sum_{i=1}^{\tau}\sigma_i^2,b\right\}\cdot\iota}.
\end{align}
\end{lemma}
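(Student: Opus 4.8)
The plan is to deduce the statement from a non-adaptive Hoeffding/Azuma-type tail bound for sums of norm-subGaussian vectors, together with a dyadic ``peeling'' over the a priori unknown value of $\sum_i\sigma_i^2$. (In the form actually needed in the sequel, $\sigma_i$ is predictable with respect to the natural filtration and $\tau$ may be a stopping time; the verbatim i.i.d.\ case is the sub-case in which $\sum_i\sigma_i^2$ is deterministic, where the $\log(B/b)$ factor could be dropped.)

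\emph{Step 1 (base bound).} First I would record the following: if $\vect{X}_1,\dots,\vect{X}_m$ is a martingale-difference sequence with $\vect{X}_i$ conditionally zero-mean nSG$(\sigma_i)$ and $\sum_{i=1}^m\sigma_i^2\le v$ almost surely for a fixed $v$, then $\Pr\!\big(\|\sum_{i=1}^m\vect{X}_i\|\ge c\sqrt{v\iota}\big)\le 2n e^{-\iota}$ for an absolute constant $c$. This follows from the one-dimensional marginals: for a fixed unit vector $\vect{u}$, the scalar $\langle\vect{u},\vect{X}_i\rangle$ is conditionally mean-zero and its tail is dominated by that of $\|\vect{X}_i\|$, hence it is conditionally subGaussian with proxy $O(\sigma_i^2)$; therefore $\exp\!\big(\lambda\sum_{i\le t}\langle\vect{u},\vect{X}_i\rangle-C\lambda^2\sum_{i\le t}\sigma_i^2\big)$ is a supermartingale, and Ville's inequality together with optimizing over $\lambda$ gives a scalar tail bound. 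Combining such bounds over a suitable family of directions (a net of the sphere, or the more careful construction of \cite{jin2019stochastic} that keeps the prefactor linear in $n$) controls $\|\sum\vect{X}_i\|$.

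\emph{Step 2 (peeling).} Split $[b,B]$ into the $N:=\lceil\log_2(B/b)\rceil$ dyadic blocks $I_j:=[2^{j}b,\,2^{j+1}b]$, $j=0,\dots,N-1$. For each $j$ let $\tau_j$ be the first index $t\le\tau$ with $\sum_{i=1}^{t}\sigma_i^2>2^{j+1}b$, and $\tau_j:=\tau$ if no such index exists; predictability of the $\sigma_i$ makes $\tau_j$ a stopping time, and by construction $\sum_{i=1}^{\tau_j}\sigma_i^2\le 2^{j+1}b$. The truncated sequence $\vect{X}_1,\dots,\vect{X}_{\tau_j}$ (padded with zeros) is again a norm-subGaussian martingale-difference sequence with total proxy at most $2^{j+1}b$ almost surely, so Step 1 applied with $v=2^{j+1}b$ gives $\|\sum_{i=1}^{\tau_j}\vect{X}_i\|\le c\sqrt{2^{j+1}b\,\iota}$ with probability at least $1-2n e^{-\iota}$. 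A union bound over $j=0,\dots,N-1$ makes all of these hold simultaneously with probability at least $1-2nN e^{-\iota}\ge 1-2n\log(B/b)e^{-\iota}$.

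\emph{Step 3 (conclusion).} On this good event, if $\sum_{i=1}^{\tau}\sigma_i^2\ge B$ we are in the first alternative and there is nothing to prove; otherwise $\sum_{i=1}^{\tau}\sigma_i^2<B$, and either it falls in some block $I_j$ or it is below $b$. In the first case $\sum_{i=1}^\tau\sigma_i^2\le 2^{j+1}b\le 2\max\{\sum_i\sigma_i^2,b\}$ and $\tau_j=\tau$, so $\|\sum_{i=1}^\tau\vect{X}_i\|\le c\sqrt{2^{j+1}b\,\iota}\le \sqrt2\,c\sqrt{\max\{\sum_i\sigma_i^2,b\}\,\iota}$; in the second case the $j=0$ block gives $\tau_0=\tau$ and $\|\sum_{i=1}^\tau\vect{X}_i\|\le c\sqrt{2b\,\iota}$, while here $\max\{\sum_i\sigma_i^2,b\}=b$. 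Redefining $c$ to absorb the $\sqrt2$ finishes the proof. I expect the genuine difficulty to be concentrated entirely in Step 1 — in particular, obtaining a prefactor that is mild in $n$ rather than the exponential $e^{\Theta(n)}$ that a crude $\varepsilon$-net would produce, and in propagating the conditioning/predictability of the $\sigma_i$ correctly through the supermartingale and optional-stopping steps. Steps 2 and 3 are then routine bookkeeping.
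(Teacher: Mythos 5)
The paper does not prove this lemma; it is quoted verbatim as Lemma~37 of Jin et al.~\cite{jin2019stochastic}, so there is no in-paper proof to compare against. Your reconstruction does, however, match the argument in that reference in its essential structure: a Hoeffding-type bound for norm-subGaussian martingale differences with an almost-surely bounded quadratic variation proxy (their Corollary~8), followed by dyadic peeling over the a priori unknown $\sum_i\sigma_i^2$. You are also right to flag the infelicities in the statement as quoted here: the index in $\sum_{i=1}^n\sigma_i^2$ should be $\tau$, and in the literally i.i.d.\ case all $\sigma_i$ are equal, $\sum_i\sigma_i^2$ is deterministic, and the $\log(B/b)$ loss is unnecessary; the lemma is interesting, and is actually used in the paper, only in the predictable-$\sigma_i$ martingale form.

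There is one small bug in Step~2. With $\tau_j$ defined as the \emph{first} $t$ with $\sum_{i\le t}\sigma_i^2>2^{j+1}b$, the assertion ``by construction $\sum_{i=1}^{\tau_j}\sigma_i^2\le 2^{j+1}b$'' is false: the partial sum exceeds $2^{j+1}b$ exactly at $\tau_j$. You should either stop one step earlier, or (cleaner, and what the reference does) truncate directly: set $\tilde{\vect{X}}_i:=\vect{X}_i\cdot\mathbbm{1}\bigl\{\sum_{k\le i}\sigma_k^2\le 2^{j+1}b\bigr\}$. Since $\sigma_i$ is $\mathcal{F}_{i-1}$-measurable the indicator is predictable, so $\tilde{\vect{X}}_i$ is again a zero-mean norm-subGaussian martingale difference with proxy $\tilde\sigma_i\le\sigma_i$, $\sum_i\tilde\sigma_i^2\le 2^{j+1}b$ almost surely, and $\tilde{\vect{X}}_i\equiv\vect{X}_i$ for all $i\le\tau$ on the event $\{\sum_{i\le\tau}\sigma_i^2\le 2^{j+1}b\}$. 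With that fix Steps~2 and~3 go through as you describe. As you anticipate, the real content is in Step~1; a crude sphere-net would inflate the prefactor badly, and the reference avoids this by running a supermartingale argument on the squared norm $\|\sum_{i\le t}\vect{X}_i\|^2$ itself rather than on one-dimensional projections, which is what delivers the stated $2n$ prefactor.
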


The next two lemmas are frequently used in the large gradient scenario of the accelerated gradient descent method:
\begin{lemma}[{\cite[Lemma 7]{jin2018accelerated}}]\label{lem:AGD-large-gradient}
Consider the setting of \thm{PAGD+ANC-full}, define a new parameter
\begin{align}
\tilde{\mathscr{T}}:=\frac{\sqrt{\ell}}{(\rho\epsilon)^{1/4}}\cdot c_A,
\end{align}
for some large enough constant $c_A$. If $\|\nabla f(\vect{x}_{\tau})\|\geq \epsilon$ for all $\tau\in[0,\tilde{\mathscr{T}}]$, then there exists a large enough positive constant $c_{A0}$, such that if we choose $c_{A}\geq c_{A0}$, by running \algo{ANCGD} we have $E_{\tilde{\mathscr{T}}}-E_{0}\leq -\mathscr{E}$, in which $\mathscr{E}=\sqrt{\frac{\epsilon^{3}}{\rho}}\cdot c_{A}^{-7}$, and $E_{\tau}$ is defined as:
\begin{align}\label{eqn:Lemma-A7}
E_{\tau}:=f(\vect{x}_{\tau})+\frac{1}{2\eta}\|\vect{v}_{\tau}\|^{2}.
\end{align}
\end{lemma}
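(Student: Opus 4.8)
The plan is to follow the Hamiltonian-based analysis of accelerated gradient descent from \cite{jin2018accelerated}. Because $\|\nabla f(\vect{x}_\tau)\|\geq\epsilon$ holds throughout $\tau\in[0,\tilde{\mathscr{T}}]$, the perturbation step (\lin{AGD-perturb}) and the negative-curvature-extraction step (\lin{NC-direction}) of \algo{ANCGD} are never invoked, so on this window \algo{ANCGD} performs the plain AGD updates $\vect{x}_{t+1}=\vect{z}_t-\eta\nabla f(\vect{z}_t)$, $\vect{v}_{t+1}=\vect{x}_{t+1}-\vect{x}_t$, $\vect{z}_{t+1}=\vect{x}_{t+1}+(1-\theta)\vect{v}_{t+1}$, each possibly followed by an NCE call. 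The central ingredient is a \emph{Hamiltonian decrease lemma}: at every step $t$ where NCE is not triggered,
\begin{align}
E_{t+1}\leq E_t-\frac{\eta}{4}\|\nabla f(\vect{z}_t)\|^2-\frac{\theta}{2\eta}\|\vect{v}_t\|^2 ,
\end{align}
and an NCE call never increases $E$ (so it can only help). To prove the displayed bound I would expand $E_{t+1}-E_t=f(\vect{x}_{t+1})-f(\vect{x}_t)+\frac{1}{2\eta}\big(\|\vect{v}_{t+1}\|^2-\|\vect{v}_t\|^2\big)$, bound $f(\vect{x}_{t+1})-f(\vect{z}_t)$ by $\ell$-smoothness, bound $f(\vect{z}_t)-f(\vect{x}_t)$ using the NCE test of \algo{ANCGD} (which, when NCE is not called, is exactly the statement that $f$ is at most $\tfrac{\gamma}{2}$-concave along the segment from $\vect{x}_t$ to $\vect{z}_t$), and substitute $\vect{v}_{t+1}=(1-\theta)\vect{v}_t-\eta\nabla f(\vect{z}_t)$ into the kinetic term; the inner-product terms $\langle\nabla f(\vect{z}_t),\vect{v}_t\rangle$ cancel, and the parameter choices $\eta=1/(4\ell)$, $\gamma=\theta^2/\eta$, together with $\theta\leq\tfrac14$, make both remaining coefficients negative.

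Summing over $\tau=0,\dots,\tilde{\mathscr{T}}-1$ gives $E_0-E_{\tilde{\mathscr{T}}}\geq\frac{\theta}{2\eta}\sum_{\tau}\|\vect{v}_\tau\|^2$. If this already exceeds $\mathscr{E}$ we are done, so assume $E_0-E_{\tilde{\mathscr{T}}}<\mathscr{E}$ (this also rules out any single NCE call contributing a decrease $\geq\mathscr{E}$). Then $\sum_\tau\|\vect{v}_\tau\|^2<\frac{2\eta\mathscr{E}}{\theta}$, and Cauchy--Schwarz gives $\|\vect{x}_t-\vect{x}_0\|^2=\big\|\sum_{\tau\leq t}\vect{v}_\tau\big\|^2\leq\tilde{\mathscr{T}}\sum_\tau\|\vect{v}_\tau\|^2<\tilde{\mathscr{T}}\cdot\frac{2\eta\mathscr{E}}{\theta}$; plugging in the values of $\tilde{\mathscr{T}},\eta,\theta$ and $\mathscr{E}$ shows that every $\vect{x}_t$, and hence every $\vect{z}_t$, stays inside a ball $\mathbb{B}_{\vect{x}_0}(\mathscr{R})$ with $\mathscr{R}=O\!\big(c_A^{-3}\sqrt{\epsilon/\rho}\big)$. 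This is the ``improve or localize'' dichotomy: either the Hamiltonian drops by $\mathscr{E}$, or the whole trajectory is confined to a tiny ball around $\vect{x}_0$.

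It remains to contradict the localized alternative, and this is where the acceleration — the upgrade from the plain gradient-descent decrease $\Theta(\epsilon^2/\ell)$ to the accelerated decrease $\Theta(\sqrt{\epsilon^3/\rho})$ matching $\mathscr{E}$ — must be extracted. Inside $\mathbb{B}_{\vect{x}_0}(\mathscr{R})$, $\rho$-Hessian-Lipschitzness makes $\nabla f$ agree with the affine map $\vect{x}\mapsto\vect{g}+\mathcal{H}(\vect{x}_0)(\vect{x}-\vect{x}_0)$ up to error $O(\rho\mathscr{R}^2)$, where $\vect{g}=\nabla f(\vect{x}_0)$ satisfies $\|\vect{g}\|\geq\epsilon$, so the AGD trajectory is, up to a controllable perturbation, the accelerated iteration on a quadratic. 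Diagonalizing $\mathcal{H}(\vect{x}_0)$, in each eigendirection this becomes a one-dimensional constant-coefficient accelerated recursion driven by the fixed force $-\eta g^{(i)}$, with $g^{(i)}$ the component of $\vect{g}$ along the $i$-th eigenvector. Over $\tilde{\mathscr{T}}=\Theta(1/\theta)$ steps such a recursion either builds up momentum and drifts a distance $\gg\mathscr{R}$ (flat, near-flat, and negative-curvature directions), or contracts enough toward the quadratic's minimizer to force a function-value decrease $\geq\mathscr{E}$ (the strongly curved directions); since $\|\vect{g}\|\geq\epsilon$, not all directions can simultaneously avoid both outcomes, so one of them occurs — contradicting confinement to $\mathbb{B}_{\vect{x}_0}(\mathscr{R})$ together with $E_0-E_{\tilde{\mathscr{T}}}<\mathscr{E}$. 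I expect this per-eigendirection analysis to be the main obstacle: one must show quantitatively that a persistently large gradient makes the accelerated dynamics ``pick up speed'' on the right time scale, while simultaneously tracking the accumulated $O(\rho\mathscr{R}^2)$ deviation from the quadratic model and the effect of any intervening NCE calls. The requirement $c_A\geq c_{A0}$ for a large absolute constant $c_{A0}$ is precisely what makes every such slack term negligible against the signal. Once the localized case is excluded, $E_{\tilde{\mathscr{T}}}-E_0\leq-\mathscr{E}$ follows.
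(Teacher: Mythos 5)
The paper does not actually prove this lemma: it is stated as an auxiliary result and cited verbatim from \cite[Lemma~7]{jin2018accelerated}, so there is no in-paper proof to compare against. The relevant observation the paper does make (implicitly, by the placement) is that \algo{ANCGD} coincides with the original perturbed AGD dynamics whenever the perturbation and negative-curvature branches are not triggered, which is exactly what your first sentence argues; that is the only thing the present paper needs to establish in order to import Jin et al.'s Lemma~7 wholesale.

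Your sketch is a faithful high-level reconstruction of the original improve-or-localize proof, and the first two paragraphs are essentially complete: the Hamiltonian-decrease inequality $E_{t+1}\leq E_t-\frac{\eta}{4}\|\nabla f(\vect{z}_t)\|^2-\frac{\theta}{2\eta}\|\vect{v}_t\|^2$ is Jin et al.'s Lemma~4, the monotonicity at NCE steps is their Lemma~5, the derivation $\vect{v}_{t+1}=(1-\theta)\vect{v}_t-\eta\nabla f(\vect{z}_t)$ is correct, and your Cauchy--Schwarz localization with $\mathscr{R}=O(c_A^{-3}\sqrt{\epsilon/\rho})$ checks out under the stated parameter choices. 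The genuine gap is where you yourself flag it: the third paragraph. Asserting that ``not all directions can simultaneously avoid both outcomes'' is exactly the content of the lemma, and your description (the one-dimensional accelerated recursion either drifts far or contracts) is not an argument but a restatement. The original proof separates the spectrum of $\mathcal{H}(\vect{x}_0)$ at threshold $\Theta(\sqrt{\rho\epsilon})$, shows via the AGD convergence theory for strongly convex quadratics that the gradient component in the high-curvature subspace decays geometrically well below $\epsilon$ within $\tilde{\mathscr{T}}$ steps, and then shows that a gradient of size $\Omega(\epsilon)$ surviving in the low-curvature subspace forces the velocity $\|\vect{v}_t\|$ (or the displacement) to grow past the localization radius — with the $O(\rho\mathscr{R}^2)$ deviation from the quadratic model and the NCE interruptions each bounded by separate sub-lemmas. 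Each of these steps requires its own quantitative estimate, and ``$c_A$ large enough'' absorbs the resulting constants only after those estimates exist. As written, your argument reduces the lemma to the hard case without resolving it, so it is a correct blueprint but not a proof.
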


\begin{lemma}[{\cite[Lemma 4 and Lemma 5]{jin2018accelerated}}]\label{lem:Hamiltonian-decrease}
Assume that the function $f$ is $\ell$-smooth. Consider the setting of \thm{PAGD+ANC-full}, for every iteration that is not within $\mathscr{T}'$ steps after uniform perturbation, we have:
\begin{align}
E_{\tau+1}\leq E_{\tau},
\end{align}
where $E_{\tau}$ is defined in Eq.~\eqn{Lemma-A7} in \lem{AGD-large-gradient}.
\end{lemma}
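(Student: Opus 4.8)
The statement is \cite[Lemmas 4 and 5]{jin2018accelerated} written in our notation (the negative-curvature test in \lin{NCE} is placed one step later than in their algorithm, which is an equivalent bookkeeping choice), so the plan is to re-derive it. Outside the $\mathscr{T}'$-step window after a perturbation we have $\zeta=\vect 0$, so at iteration $\tau$ the algorithm first performs a Nesterov step $\vect x_{\tau+1}=\vect z_\tau-\eta\nabla f(\vect z_\tau)$, $\vect v_{\tau+1}=\vect x_{\tau+1}-\vect x_\tau$, $\vect z_{\tau+1}=\vect x_{\tau+1}+(1-\theta)\vect v_{\tau+1}$, and then possibly overwrites $(\vect x_{\tau+1},\vect v_{\tau+1})$ via the NCE subroutine (see \fnote{NCE}). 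I would prove $E_{\tau+1}\le E_\tau$ by showing separately that (A) the Nesterov step does not increase $E$, and (B) the NCE correction does not increase $E$ beyond its pre-NCE value; since a Nesterov step whose input momentum is $\vect 0$ is just a gradient step, these two facts compose to give the claim for every iteration outside the excluded window.

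For (A): if NCE fired at iteration $\tau-1$ then $\vect v_\tau=\vect 0$ and $\vect z_\tau=\vect x_\tau$, so the step is a plain gradient step and $E_{\tau+1}\le E_\tau$ follows from $\ell$-smoothness with $\eta=\tfrac1{4\ell}\le\tfrac1\ell$. Otherwise the negation of the test of \lin{NCE} at iteration $\tau-1$ gives the ``almost convex'' bound $f(\vect z_\tau)< f(\vect x_\tau)+(1-\theta)\langle\nabla f(\vect z_\tau),\vect v_\tau\rangle+\tfrac{\gamma(1-\theta)^2}{2}\|\vect v_\tau\|^2$. Plugging the descent estimate $f(\vect x_{\tau+1})\le f(\vect z_\tau)-\tfrac\eta2\|\nabla f(\vect z_\tau)\|^2$ and the expansion $\|\vect v_{\tau+1}\|^2=(1-\theta)^2\|\vect v_\tau\|^2-2\eta(1-\theta)\langle\vect v_\tau,\nabla f(\vect z_\tau)\rangle+\eta^2\|\nabla f(\vect z_\tau)\|^2$ into $E_{\tau+1}$, the inner products and the $\|\nabla f(\vect z_\tau)\|^2$ terms cancel, leaving $E_{\tau+1}\le f(\vect x_\tau)+\tfrac{(1-\theta)^2(1+\eta\gamma)}{2\eta}\|\vect v_\tau\|^2$; since $\eta\gamma=\theta^2$ by \eqn{parameter-choice-AGD} and $(1-\theta)^2(1+\theta^2)\le1$ for $\theta\in[0,1]$ (an elementary one-variable inequality), this is $\le E_\tau$. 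One in fact gets a strictly negative slack, which powers \lem{AGD-large-gradient}; here mere non-increase suffices.

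For (B): NCE sends $(\vect x,\vect v)\mapsto(\vect x,\vect 0)$ when $\|\vect v\|\ge s$, in which case $E$ drops by exactly $\tfrac1{2\eta}\|\vect v\|^2\ge0$; and $(\vect x,\vect v)\mapsto(\vect x+s'\hat{\vect v},\vect 0)$ with $s'\in\{\pm s\}$ minimizing $f$ otherwise. In the latter case the firing condition of \lin{NCE}, read through Taylor's theorem with integral remainder, forces a point $\vect u^*$ of the segment $[\vect z_{\tau+1},\vect x_{\tau+1}]$ — of length $(1-\theta)\|\vect v_{\tau+1}\|<s$ — at which $\hat{\vect v}_{\tau+1}^{T}\mathcal H(\vect u^*)\hat{\vect v}_{\tau+1}\le-\gamma$. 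Then $\rho$-Hessian-Lipschitzness with $s=\gamma/(4\rho)$ upgrades this to $\hat{\vect v}_{\tau+1}^{T}\mathcal H\hat{\vect v}_{\tau+1}\le-\gamma/2$ throughout the $2s$-ball around $\vect u^*$, which contains $[\vect x_{\tau+1}-s\hat{\vect v}_{\tau+1},\vect x_{\tau+1}+s\hat{\vect v}_{\tau+1}]$; averaging the second-order Taylor expansions of $f$ in the $+$ and $-$ directions cancels the gradient term and gives $\min\{f(\vect x_{\tau+1}\pm s\hat{\vect v}_{\tau+1})\}\le f(\vect x_{\tau+1})-\tfrac\gamma4 s^2\le f(\vect x_{\tau+1})+\tfrac1{2\eta}\|\vect v_{\tau+1}\|^2$, which is the pre-NCE value of $E_{\tau+1}$. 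Finally, iterations within $\mathscr{T}'$ of a perturbation — including the negative-curvature exploitation step, which zeroes the momentum — are explicitly excluded, so every remaining iteration is covered by (A) and (B).

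The routine part is the cancellation in (A); the one genuinely non-mechanical point is the curvature extraction in (B), i.e.\ converting the scalar inequality of \lin{NCE} into a pointwise bound on $\hat{\vect v}^{T}\mathcal H\hat{\vect v}$ and propagating it with the Hessian-Lipschitz constant and the calibrated radius $s=\gamma/(4\rho)$. This is also the only place the $\rho$-Hessian-Lipschitz assumption in the setting of \thm{PAGD+ANC-full} is used; everything else needs only $\ell$-smoothness and the elementary bound $(1-\theta)^2(1+\theta^2)\le1$.
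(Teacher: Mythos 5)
The paper does not prove this lemma; it is imported verbatim from Jin--Netrapalli--Jordan~\cite{jin2018accelerated} (their Lemmas~4 and~5, together with their Lemma~6 which is needed for the small-momentum branch of NCE). Your proposal is a correct reconstruction of that cited argument and uses the same two-part decomposition: (A) the ``almost-convex'' Hamiltonian non-increase for the Nesterov step, driven by the cancellation you identify and the elementary bound $(1-\theta)^2(1+\eta\gamma)\le 1$ once $\eta\gamma=\theta^2$, and (B) the NCE branch, where the firing condition combined with Taylor's theorem with integral remainder produces a point of negative directional curvature, the $\rho$-Hessian-Lipschitz assumption propagates it to the $2s$-ball, and symmetrizing the two perturbation directions kills the gradient term. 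You also correctly flag the one presentational difference from \cite{jin2018accelerated}: \algo{ANCGD} evaluates the NCE test at $(\vect x_{t+1},\vect z_{t+1})$ rather than at $(\vect x_t,\vect y_t)$, a harmless index shift, and correctly identify that iteration $t_{\text{perturb}}+\mathscr{T}'$ (where $\vect z_t$ is reset to $\vect x_t$, effectively zeroing the momentum for the next step) is outside the lemma's scope. Since the paper itself supplies no proof, there is nothing further to compare against; your derivation is sound.
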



\section{Proof details of negative curvature finding by gradient descents}\label{append:NC-GD}
\subsection{Proof of \lem{GD-component-lowerbound}}\label{append:NC-GD-proof}
\begin{restatable}{lemma}{GDcomponentlowerbound}
\label{lem:GD-component-lowerbound}
Under the setting of \prop{NC-finding}, we use $\alpha_{t}$ to denote
\begin{align}
\alpha_{t}=\|\vect{y}_{t,\parallel}\|/\|\vect{y}_{t}\|,
\end{align}
where $\vect{y}_{t,\parallel}$ defined in Eq.~\eqn{para-perp-defn} is the component of $\vect{y}_t$ in the subspace $\mathfrak{S}_{\parallel}$ spanned by $\left\{\vect{u}_1,\vect{u}_2,\ldots,\vect{u}_p\right\}$. Then, during all the $\mathscr{T}$ iterations of \algo{NC-finding}, we have $\alpha_{t}\geq\alpha_{\min}$ for
\begin{align}
\alpha_{\min}=\frac{\delta_0}{4}\sqrt{\frac{\pi}{n}},
\end{align}
given that $\alpha_{0}\geq\sqrt{\frac{\pi}{n}}\delta_0$.
\end{restatable}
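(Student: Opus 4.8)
The plan is to track the ratio $\alpha_t = \|\vect{y}_{t,\parallel}\|/\|\vect{y}_t\|$ through the iteration in \lin{y_t-definition}, showing it is essentially non-decreasing up to the small perturbation coming from the deviation term $\Delta$ defined in Eq.~\eqn{Delta-defn}. First I would recall the two one-step recurrences already established in the proof of \prop{NC-finding}: writing the update as $\vect{y}_{t+1} = \vect{y}_t - \frac{\|\vect{y}_t\|}{r\ell}(\nabla h_f(r\vect{y}_t/\|\vect{y}_t\|))$, and decomposing along the eigenbasis $\{\vect{u}_i\}$, the component $\vect{y}_{t,\parallel}$ (in the span of the eigenvectors with $\lambda_i \le -\sqrt{\rho\epsilon}$) satisfies
\begin{align}
\|\vect{y}_{t+1,\parallel}\| \geq \big(1 + \sqrt{\rho\epsilon}/(2\ell) - \rho r/(\alpha_t \ell)\big)\|\vect{y}_{t,\parallel}\|,
\end{align}
while the full vector obeys a matching upper bound $\|\vect{y}_{t+1}\| \leq (1 + \sqrt{\rho\epsilon}/(2\ell)\cdot c + \rho r/\ell)\|\vect{y}_t\|$ for a suitable constant $c$, since every eigenvalue in absolute value is at most $\ell$ by $\ell$-smoothness, so the multiplicative factor on any component is at most $1 + \ell/\ell = 2$; more precisely the relevant bound is that $\|\vect{y}_{t+1}\|$ grows by a factor at most $1 + \sqrt{\rho\epsilon}/(2\ell)$ coming from the Hessian's most negative direction plus the deviation $\rho r/\ell$. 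Dividing the two gives a recurrence for $\alpha_t$ itself.

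Next I would show $\alpha_{t+1} \geq \alpha_t \cdot \frac{1 + \sqrt{\rho\epsilon}/(2\ell) - \rho r/(\alpha_t\ell)}{1 + \sqrt{\rho\epsilon}/(2\ell) + \rho r/\ell}$, and argue that as long as $\alpha_t \geq \alpha_{\min} = \frac{\delta_0}{4}\sqrt{\pi/n}$, the numerator exceeds the denominator minus a controlled amount, so $\alpha_t$ cannot drop below $\alpha_{\min}$ having started at $\alpha_0 \geq \sqrt{\pi/n}\,\delta_0 = 4\alpha_{\min}$. The key quantitative input is the choice $r = \frac{\epsilon}{8\ell}\sqrt{\pi/n}\,\delta_0$ from Eq.~\eqn{parameter-choice}: this makes $\rho r/\ell \leq \frac{\rho\epsilon}{8\ell^2}\sqrt{\pi/n}\,\delta_0$, which is much smaller than $\alpha_{\min}\sqrt{\rho\epsilon}/(2\ell)$ provided $\sqrt{\rho\epsilon}/\ell$ is small (true since $\ell$ dominates), so the "leakage" $\rho r/(\alpha_t\ell)$ stays well below $\sqrt{\rho\epsilon}/(4\ell)$ whenever $\alpha_t \geq \alpha_{\min}$. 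Thus near the threshold the factor multiplying $\alpha_t$ is $\geq 1$, giving a barrier argument: $\alpha_t$ never descends through $\alpha_{\min}$. I would make this rigorous by induction on $t$, with the inductive hypothesis $\alpha_t \geq \alpha_{\min}$ for all $t \leq \mathscr{T}$ — the one delicate point is handling the first step where the deviation could in principle knock $\alpha_1$ down by a bounded multiplicative factor from $\alpha_0$, but since $\alpha_0 \geq 4\alpha_{\min}$ there is enough slack to absorb even a constant-factor one-step loss and then maintain the invariant thereafter.

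The main obstacle I anticipate is controlling the perpendicular growth precisely enough: the crude bound "each component grows by at most factor $2$" is too lossy, and one genuinely needs that the components in $\mathfrak{S}_\perp$ (eigenvalues $> -\sqrt{\rho\epsilon}$, which can still be as negative as $-\sqrt{\rho\epsilon}$, or as large as $\ell$) grow no faster than roughly $1 + \sqrt{\rho\epsilon}/(2\ell)$ relative to $\|\vect{y}_t\|$ — but this is false for eigenvalues near $-\ell$. The resolution is that the argument only needs $\alpha_t$ to stay bounded below, and $\|\vect{y}_{t,\parallel}\|$ lies in the span of the \emph{most} negative eigenvalues, so the ratio $\|\vect{y}_{t,\parallel}\|/\|\vect{y}_t\|$ is really governed by comparing growth rates within the eigendecomposition: the $\parallel$ directions have the largest growth exponents $(1 + |\lambda_i|/\ell)$ with $|\lambda_i| \geq \sqrt{\rho\epsilon}$, so $\|\vect{y}_{t,\parallel}\|$ grows at least as fast as any perpendicular component \emph{except} for the deviation term. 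Hence the correct formulation is to bound $\|\vect{y}_{t+1,\perp}\| \leq (1 + \sqrt{\rho\epsilon}/\ell)\|\vect{y}_{t,\perp}\| + \|\Delta\|$ is wrong; instead one compares each individual mode and uses that the slowest-growing $\parallel$ mode still beats the fastest-growing mode below the $\sqrt{\rho\epsilon}$ threshold only up to the additive $\Delta$ — so I would instead directly estimate $\frac{d}{dt}$ of $\alpha_t^2 = \|\vect{y}_{t,\parallel}\|^2/(\|\vect{y}_{t,\parallel}\|^2 + \|\vect{y}_{t,\perp}\|^2)$ and show its decrease per step is $O(\rho r/\ell) = O(\alpha_{\min}\sqrt{\rho\epsilon}/\ell)$, small enough that over $\mathscr{T} = O(\ell/\sqrt{\rho\epsilon} \cdot \log(\cdot))$ steps — wait, that product is not small — so the clean argument must be the multiplicative barrier one above, where decrease only happens when $\alpha_t$ is already large, and I would present that version.
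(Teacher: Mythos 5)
There is a genuine gap, and it is located exactly where you flag uncertainty but then misdiagnose the problem.

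Your recurrence $\alpha_{t+1} \geq \alpha_t \cdot \frac{1 + \sqrt{\rho\epsilon}/(2\ell) - \rho r/(\alpha_t\ell)}{1 + \sqrt{\rho\epsilon}/(2\ell) + \rho r/\ell}$ requires the denominator to be a valid upper bound on the one-step growth of $\|\vect{y}_{t}\|$, but the bound $\|\vect{y}_{t+1}\| \leq (1+\sqrt{\rho\epsilon}/(2\ell)+\rho r/\ell)\|\vect{y}_t\|$ is simply false. The update is $\vect{y}_{t+1} = (I - \mathcal{H}/\ell)\vect{y}_t - \Delta/\ell$, and $\|I - \mathcal{H}/\ell\|$ can be as large as $2$ since $\ell$-smoothness only caps eigenvalues at $\pm\ell$; an eigenvalue near $-\ell$ \emph{lives in $\mathfrak{S}_{\parallel}$} (since $-\ell \leq -\sqrt{\rho\epsilon}$), so $\|\vect{y}_t\|$ can nearly double each step. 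Your final paragraph senses this but locates the offending eigenvalues in $\mathfrak{S}_{\perp}$ --- that is wrong (perp eigenvalues satisfy $\lambda_i > -\sqrt{\rho\epsilon}$, so the perp multiplier really is at most $1+\sqrt{\rho\epsilon}/\ell$); the troublemakers are the very negative eigenvalues inside $\mathfrak{S}_{\parallel}$, which blow up the denominator $\|\vect{y}_t\|$ you are normalizing by. Moreover, even granting your (incorrect) bound, the factor in your recurrence is strictly less than $1$ for every $t$ (since $-\rho r/(\alpha_t\ell) < \rho r/\ell$ always), so there is no barrier effect to invoke: $\alpha_t$ genuinely decays every step in the worst case, and the argument must account for cumulative decay, not a one-sided barrier.

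The paper avoids both problems. It tracks the ratio $\|\vect{y}_{t,\parallel}\|/\|\vect{y}_{t,\perp}\|$ rather than $\|\vect{y}_{t,\parallel}\|/\|\vect{y}_t\|$, so that the uncontrolled fast growth in $\mathfrak{S}_{\parallel}$ only helps the numerator and never poisons the denominator; the clean bounds $\|\vect{y}_{t+1,\parallel}\| \geq (1+\sqrt{\rho\epsilon}/\ell)\|\vect{y}_{t,\parallel}\| - \|\Delta\|/\ell$ and $\|\vect{y}_{t+1,\perp}\| \leq (1+\sqrt{\rho\epsilon}/\ell)\|\vect{y}_{t,\perp}\| + \|\Delta\|/\ell$ both hold without any worst-case assumption. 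The remaining delicate step --- converting $\|\Delta\|/\|\vect{y}_t\| \leq \rho r$ into a bound relative to $\|\vect{y}_{t,\perp}\|$ --- is handled by a separate reduction (\lem{GD-worstcase}) that shows $\alpha_t$ is minimized in the landscape where $\lambda_1 = \cdots = \lambda_{n-1} = -\sqrt{\rho\epsilon}$ with $\Delta$ adversarially aligned; in that worst case $\|\vect{y}_{t,\perp}\|/\|\vect{y}_t\|$ is non-decreasing from its initial value near $1$, giving $\|\Delta\|/\|\vect{y}_{t,\perp}\| \leq 2\rho r$. Finally the per-step decay factor of $\|\vect{y}_{t,\parallel}\|/\|\vect{y}_{t,\perp}\|$ is shown to be at least $1-1/\mathscr{T}$, so after $\mathscr{T}$ steps the ratio retains at least an $e^{-1} > 1/2$ fraction of its initial value, yielding $\alpha_t \geq \alpha_0/4 = \alpha_{\min}$. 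You would need to supply both the change of denominator and the worst-case reduction (or an equivalent case split on whether $\alpha_t$ is small) to turn your sketch into a proof.
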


To prove \lem{GD-component-lowerbound}, we first introduce the following lemma:
\begin{lemma}\label{lem:GD-worstcase}
Under the setting of \prop{NC-finding} and \lem{GD-component-lowerbound}, for any $t>0$ and initial value $\vect{y}_0$, $\alpha_{t}$ achieves its minimum possible value in the specific landscapes satisfying
\begin{align}\label{eqn:worst-case}
\lambda_1=\lambda_2=\cdots=\lambda_p=\lambda_{p+1}=\lambda_{p+2}=\cdots=\lambda_{n-1}=-\sqrt{\rho\epsilon}.
\end{align}
\end{lemma}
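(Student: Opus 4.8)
The plan is to prove the comparison by an eigenvalue‑by‑eigenvalue exchange argument, after first replacing the true deviation by a worst‑case bounded perturbation. Rewriting one step of \lin{y_t-definition} using \eqn{gradient-approximation-0} and \eqn{Delta-defn}, the update is $\vect{y}_{t+1}=(I-\mathcal{H}(\vect{0})/\ell)\,\vect{y}_{t}-\Delta$ with the \emph{landscape-independent} bound $\|\Delta\|\le(\rho r/\ell)\|\vect{y}_{t}\|$; since $\alpha_t$ is scale-invariant, the optional renormalization is irrelevant. Because the true $\Delta$ produced by any admissible $f$ is one particular vector obeying this bound, we have $\alpha_t\ge\min_{\Delta}\alpha_t$, where the minimum ranges over all perturbation sequences respecting the bound, so it suffices to show that $\min_{\lambda}\min_{\Delta}\alpha_t$ is attained at the configuration $\lambda_1=\dots=\lambda_{n-1}=-\sqrt{\rho\epsilon}$ of \eqn{worst-case}. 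Using \eqn{GD-Hessian-decomposition} I split the state along the invariant subspaces $\mathfrak{S}_{\parallel}$, $\mathfrak{S}_{\perp}$; the landscape enters only through the factors $\mu_i:=1-\lambda_i/\ell$, which obey $\mu_i\ge\mu^{\star}:=1+\sqrt{\rho\epsilon}/\ell$ on $\mathfrak{S}_{\parallel}$ and $0\le\mu_i\le\mu^{\star}$ on $\mathfrak{S}_{\perp}$, with moreover $\mu_n<1+\sqrt{\rho\epsilon}/(2\ell)$ because the proof of \prop{NC-finding} has already reduced to the regime $\lambda_n>-\sqrt{\rho\epsilon}/2$.

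For the exchange, note that raising a parallel eigenvalue $\lambda_i$ ($i\le p$) to $-\sqrt{\rho\epsilon}$ only shrinks $\mu_i$ toward $\mu^{\star}$, hence at every step can only decrease $\|\vect{y}_{t,\parallel}\|$ while leaving $\|\vect{y}_{t,\perp}\|$ and the perturbation budget $(\rho r/\ell)\|\vect{y}_t\|$ no smaller, and therefore can only decrease $\alpha_t$; symmetrically, lowering a perpendicular eigenvalue $\lambda_i$ ($p<i<n$) to $-\sqrt{\rho\epsilon}$, or $\lambda_n$ toward $-\sqrt{\rho\epsilon}/2$, only enlarges $\mu_i$, hence only enlarges $\|\vect{y}_{t,\perp}\|$ (and the budget available to erode $\|\vect{y}_{t,\parallel}\|$), decreasing $\alpha_t$. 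After all such replacements every one of $\lambda_1,\dots,\lambda_{n-1}$ equals $-\sqrt{\rho\epsilon}$, so $p=n-1$, $\mathfrak{S}_{\parallel}=\spn\{\vect{u}_1,\dots,\vect{u}_{n-1}\}$ is scaled by the single factor $\mu^{\star}$, and $\mathfrak{S}_{\perp}=\spn\{\vect{u}_n\}$; the pair $(\|\vect{y}_{t,\parallel}\|,|y_{t,n}|)$ then evolves by a self-contained recursion and the target $\alpha_t$ depends on $\vect{y}_0$ only through $\alpha_0$, so by rotational symmetry within $\mathfrak{S}_{\parallel}$ we may take $\vect{y}_0\in\spn\{\vect{u}_1,\vect{u}_n\}$, which is exactly the two-dimensional picture of the claimed landscape.

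The main obstacle is to make each ``this replacement can only decrease $\alpha_t$'' step rigorous, since changing one $\lambda_i$ changes the whole trajectory and, with it, the perturbation sequence a worst-case adversary would pick. I would do this by induction on $t$ with an explicit coupling: from the worst trajectory $\{\vect{y}_t\}$ of the original landscape, construct $\{\vect{y}_t'\}$ in the modified one by transporting $\Delta_t$ coordinate-wise and rescaling it to stay within the (possibly looser) norm bound, then use the monotonicity of each coordinate map $z\mapsto\mu_i z$ to propagate the invariants $\|\vect{y}_{t,\parallel}'\|\le\|\vect{y}_{t,\parallel}\|$ and $\|\vect{y}_{t,\perp}'\|\ge\|\vect{y}_{t,\perp}\|$, giving $\alpha_t'\le\alpha_t$. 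Everything downstream — writing out the explicit two-dimensional recursion in landscape \eqn{worst-case} and verifying $\alpha_t\ge\alpha_{\min}$ — is then the routine computation left to the proof of \lem{GD-component-lowerbound}.
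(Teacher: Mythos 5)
Your proposal takes essentially the same route as the paper's proof: an eigenvalue-by-eigenvalue exchange with an explicit trajectory coupling, transporting the deviation coordinate-wise and using the monotonicity of each map $z\mapsto\mu_i z$ to propagate $\|\vect{y}'_{t,\parallel}\|\le\|\vect{y}_{t,\parallel}\|$ and $\|\vect{y}'_{t,\perp}\|\ge\|\vect{y}_{t,\perp}\|$, hence $\alpha_t'\le\alpha_t$. The paper packages the same coupling as a proof by contradiction (if the minimizer is not the configuration \eqn{worst-case}, a neighboring landscape gives a strictly smaller $\alpha_\tau$), while you argue forward by repeated replacement; this is a presentational difference only, and your preliminary remark reducing to an adversarial bounded-$\Delta$ model and noting scale-invariance of $\alpha_t$ is exactly the right framing.

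There is, however, one concrete error in your sketch. You assert that raising a \emph{parallel} eigenvalue $\lambda_i$ ($i\le p$) toward $-\sqrt{\rho\epsilon}$ leaves the perturbation budget $(\rho r/\ell)\|\vect{y}_t\|$ ``no smaller.'' That is false: raising $\lambda_i$ shrinks $\mu_i$, which shrinks $\|\vect{y}_{t,\parallel}\|$ while leaving $\|\vect{y}_{t,\perp}\|$ unchanged, so $\|\vect{y}_t\|$ and the budget both strictly \emph{decrease}. The coordinate-wise transported deviation may then fail $\|\Delta'_t\|\le(\rho r/\ell)\|\vect{y}'_t\|$, and the ``rescale to fit'' patch you propose changes the off-$i$ coordinates of $\Delta'_t$, breaking the invariants your induction needs. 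In fairness, the paper's own proof only works out the perpendicular case $k>p$ (where $\|\vect{y}_{h,t}\|\ge\|\vect{y}_{g,t}\|$ makes the budget automatically slack) and dismisses $k\le p$ with ``similarly''; your write-up faithfully reproduces that gap, but your explicit budget claim turns a silent omission into a visibly wrong step. The parallel-eigenvalue exchange needs a separate argument that does not lean on the budget growing.
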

\begin{proof}
We prove this lemma by contradiction. Suppose for some $\tau>0$ and initial value $\vect{y}_0$, $\alpha_\tau$ achieves its minimum value in some landscape $g$ that does not satisfy Eq.~\eqn{worst-case}. That is to say, there exists some $k\in[n-1]$ such that $\lambda_k\neq-\sqrt{\rho\epsilon}$.

We first consider the case $k>p$. Since $\lambda_{n-1},\ldots,\lambda_{p+1}\geq-\sqrt{\rho\epsilon}$, we have $\lambda_{k}>-\sqrt{\rho\epsilon}$. We use $\left\{\vect{y}_{g,t}\right\}$ to denote the iteration sequence of $\vect{y}_t$ in this landscape.

Consider another landscape $h$ with $\lambda_k=-\sqrt{\rho\epsilon}$ and all other eigenvalues being the same as $g$, and we use $\left\{\vect{y}_{h,t}\right\}$ to denote the iteration sequence of $\vect{y}_t$ in this landscape. Furthermore, we assume that at each gradient query, the deviation from pure quadratic approximation defined in Eq.~\eqn{Delta-defn}, denoted $\Delta_{h,t}$, is the same as the corresponding deviation $\Delta_{g,t}$ in the landscape $g$ along all the directions other than $\vect{u}_k$. As for its component $\Delta_{h,t,k}$ along $\vect{u}_k$, we set $|\Delta_{h,t,k}|=|\Delta_{g,t,k}|$ with its sign being the same as $y_{t,k}$. 

Under these settings, we have $y_{h,\tau,j}=y_{g,\tau,j}$ for any $j\in[n]$ and $j\neq k$. As for the component along $\vect{u}_k$, we have $|y_{h,\tau,j}|>|y_{g,\tau,j}|$. Hence, by the definition of $\vect{y}_{t,\parallel}$ in Eq.~\eqn{para-perp-defn}, we have
\begin{align}
\|\vect{y}_{g,\tau,\parallel}\|=\big(\sum_{j=1}^p y_{g,\tau,j}^2\big)^{1/2}=\big(\sum_{j=1}^p y_{h,\tau,j}^2\big)^{1/2}=\|\vect{y}_{h,\tau,\parallel}\|,
\end{align}
whereas
\begin{align}
\|\vect{y}_{g,\tau}\|=\big(\sum_{j=1}^n y_{g,\tau,j}^2\big)^{1/2}<\big(\sum_{j=1}^n y_{h,\tau,j}^2\big)^{1/2}=\|\vect{y}_{h,\tau}\|,
\end{align}
indicating
\begin{align}
\alpha_{g,\tau}=\frac{\|\vect{y}_{g,\tau,\parallel}\|}{\|\vect{y}_{g,\tau}\|}
>\frac{\|\vect{y}_{h,\tau,\parallel}\|}{\|\vect{y}_{h,\tau}\|}=\alpha_{h,\tau},
\end{align}
contradicting to the supposition that $\alpha_\tau$ achieves its minimum possible value in $g$. Similarly, we can also obtain a contradiction for the case $k\leq p$.
\end{proof}

Equipped with \lem{GD-worstcase}, we are now ready to prove \lem{GD-component-lowerbound}.

\begin{proof}
In this proof, we consider the worst case, where the initial value $\alpha_{0}=\sqrt{\frac{\pi}{n}}\delta_0$. Also, according to \lem{GD-worstcase}, we assume that the eigenvalues satisfy
\begin{align}
\lambda_1=\lambda_2=\cdots=\lambda_p=\lambda_{p+1}=\lambda_{p+2}=\cdots=\lambda_{n-1}=-\sqrt{\rho\epsilon}.
\end{align}
Moreover, we assume that each time we make a gradient call at some point $\vect{x}$, the derivation term $\Delta$ from pure quadratic approximation
\begin{align}
\Delta=\nabla h_f(\vect{x})-\mathcal{H}(\vect{0})\vect{x}
\end{align}
lies in the direction that can make $\alpha_{t}$ as small as possible. Then, the component $\Delta_{\perp}$ in $\mathfrak{S}_{\perp}$ should be in the direction of $\vect{x}_{\perp}$, and the component $\Delta_{\parallel}$ in $\mathfrak{S}_{\parallel}$ should be in the opposite direction to $\vect{x}_{\parallel}$, as long as $\|\Delta_{\parallel}\|\leq\|\vect{x}_{\parallel}\|$. Hence in this case, we have $\|\vect{y}_{t,\perp}\|/\|\vect{y}_t\|$ being non-decreasing. Also, it admits the following recurrence formula:
\begin{align}
\|\vect{y}_{t+1,\perp}\|&=(1+\sqrt{\rho\epsilon}/\ell)\|\vect{y}_{t,\perp}\|+\|\Delta_{\perp}\|/\ell\\
&\leq (1+\sqrt{\rho\epsilon}/\ell)\|\vect{y}_{t,\perp}\|+\|\Delta\|/\ell\\
&\leq\Big(1+\sqrt{\rho\epsilon}/\ell+\frac{\|\Delta\|}{\ell\|\vect{y}_{t,\perp}\|}\Big)\|\vect{y}_{t,\perp}\|,
\end{align}
where the second inequality is due to the fact that $\nu$ can be an arbitrarily small positive number. Note that since $\|\vect{y}_{t,\perp}\|/\|\vect{y}_t\|$ is non-decreasing in this worst-case scenario, we have
\begin{align}
\frac{\|\Delta\|}{\|\vect{y}_{t,\perp}\|}\leq \frac{\|\Delta\|}{\|\vect{y}_{t}\|}\cdot\frac{\|\vect{y}_0\|}{\|\vect{y}_{0,\perp}\|}\leq \frac{2\|\Delta\|}{\|\vect{y}_{t}\|}\leq 2\rho r,
\end{align}
which leads to
\begin{align}\label{eqn:GD-perp-recurrence}
\|\vect{y}_{t+1,\perp}\|\leq(1+\sqrt{\rho\epsilon}/\ell+2\rho r/\ell)\|\vect{y}_{t,\perp}\|.
\end{align}
On the other hand, suppose for some value $t$, we have $\alpha_{k}\geq\alpha_{\min}$ with any $1\leq k\leq t$. Then,
\begin{align}
\|\vect{y}_{t+2,\parallel}\|&=(1+\sqrt{\rho\epsilon}/\ell)\|\vect{y}_{t+1,\perp}\|-\|\Delta_{\parallel}\|/\ell\\
&\geq\Big(1+\sqrt{\rho\epsilon}/\ell-\frac{\|\Delta\|}{\ell\|\vect{y}_{t,\parallel}\|}\Big)\|\vect{y}_{t,\parallel}\|.
\end{align}
Note that since $\|\vect{y}_{t,\parallel}\|/\|\vect{y}_t\|\geq\alpha_{\min}$, we have
\begin{align}
\frac{\|\Delta\|}{\|\vect{y}_{t,\parallel}\|}\geq \frac{\|\Delta\|}{\alpha_{\min}\|\vect{y}_{t}\|}=\rho r/\alpha_{\min},
\end{align}
which leads to
\begin{align}\label{eqn:GD-parallel-recurrence}
\|\vect{y}_{t+1,\parallel}\|\geq(1+\sqrt{\rho\epsilon}/\ell-\rho r/(\alpha_{\min}\ell))\|\vect{y}_{t,\parallel}\|.
\end{align}
Then we can observe that
\begin{align}
\frac{\|\vect{y}_{t,\parallel}\|}{\|\vect{y}_{t,\perp}\|}\geq
\frac{\|\vect{y}_{0,\parallel}\|}{\|\vect{y}_{0,\perp}\|}\cdot\Big(\frac{1+\sqrt{\rho\epsilon}/\ell-\rho r/(\alpha_{\min}\ell)}{1+\sqrt{\rho\epsilon}/\ell+2\rho r/\ell}\Big)^t,
\end{align}
where
\begin{align}
\frac{1+\sqrt{\rho\epsilon}/\ell-\rho r/(\alpha_{\min}\ell)}{1+\sqrt{\rho\epsilon}/\ell+2\rho r/\ell}
&\geq(1+\sqrt{\rho\epsilon}/\ell-\rho r/(\alpha_{\min}\ell))(1-\sqrt{\rho\epsilon}/\ell-2\rho r/\ell)\\
&\geq 1-\rho\epsilon/\ell^2-\frac{2\rho r}{\alpha_{\min}\ell}\geq 1-\frac{1}{\mathscr{T}},
\end{align}
by which we can derive that
\begin{align}
\frac{\|\vect{y}_{t,\parallel\|}}{\|\vect{y}_{t,\perp}\|}
&\geq \frac{\|\vect{y}_{0,\parallel}\|}{\|\vect{y}_{0,\perp}\|}(1-1/\mathscr{T})^{t}\\
&\geq \frac{\|\vect{y}_{0,\parallel}\|}{\|\vect{y}_{0,\perp}\|}\exp\Big(-\frac{t}{\mathscr{T}-1}\Big)\geq\frac{\|\vect{y}_{0,\parallel}\|}{2\|\vect{y}_{0,\perp}\|},
\end{align}
indicating
\begin{align}
\alpha_{t}=\frac{\|\vect{y}_{t,\parallel}\|}{\sqrt{\|\vect{y}_{t,\parallel}\|^2+\|\vect{y}_{t,\perp}\|^2}}\geq \frac{\|\vect{y}_{0,\parallel}\|}{4\|\vect{y}_{0,\perp}\|}\geq\alpha_{\min}.
\end{align}
Hence, as long as $\alpha_{k}\geq\alpha_{\min}$ for any $1\leq k\leq t-1$, we can also have $\alpha_{t}\geq \alpha_{\min}$ if $t\leq \mathscr{T}$. Since we have $\alpha_{0}\geq \alpha_{\min}$, we can thus claim that $\alpha_{t}\geq \alpha_{\min}$ for any $t\leq \mathscr{T}$ using recurrence.
\end{proof}


\subsection{Proof of \prop{ANC-finding}}\label{append:ANC-finding-proof}
To make it easier to analyze the properties and running time of \algo{ANCGD}, we introduce a new \algo{ANC-finding},
\begin{algorithm}[htbp]
\caption{Accelerated Negative Curvature Finding without Renormalization($\tilde{\vect{x}},r',\mathscr{T}'$).}
\label{algo:ANC-finding}
$\vect{x}_0\leftarrow$Uniform$(\mathbb{B}_0(r'))$ where $\mathbb{B}_0(r')$ is the $\ell_{2}$-norm ball centered at $\tilde{\vect{x}}$ with radius $r'$\;
$\vect{z}_0\leftarrow\vect{x}_0$\;
\For{$t=1,...,\mathscr{T}'$}{
$\vect{x}_{t+1}\leftarrow\vect{z}_t-\eta\Big(\frac{\|\vect{z}_t-\tilde{\vect{x}}\|}{r'}\nabla f\Big(r'\cdot\frac{\vect{z}_t-\tilde{\vect{x}}}{\|\vect{z}_t-\tilde{\vect{x}}\|}+\tilde{\vect{x}}\Big)-\nabla f(\tilde{\vect{x}})\Big)$\;
$\vect{v}_{t+1}\leftarrow\vect{x}_{t+1}-\vect{x}_{t}$\;
$\vect{z}_{t+1}\leftarrow\vect{x}_{t+1}+(1-\theta)\vect{v}_{t+1}$\;

}
\textbf{Output} $\frac{\vect{x}_{\mathscr{T}'}-\tilde{\vect{x}}}{\|\vect{x}_{\mathscr{T}'}-\tilde{\vect{x}}\|}$.
\end{algorithm}
which has a more straightforward structure and has the same effect as \algo{ANCGD} near any saddle point $\tilde{\vect{x}}$. Quantitatively, this is demonstrated in the following lemma:
\begin{lemma}\label{lem:ANC-same}
Under the setting of \prop{ANC-finding}, suppose the perturbation in \lin{AGD-perturb} of \algo{ANCGD} is added at $t=0$. Then with the same value of $r'$, $\mathscr{T}'$, $\tilde{\vect{x}}$ and $\vect{x}_0$, the output of \algo{ANC-finding} is the same as the unit vector $\hat{\vect{e}}$ in \algo{ANCGD} obtained $\mathscr{T}'$ steps later. In other words, if we separately denote the iteration sequence of $\left\{\vect{x}_t\right\}$ in \algo{ANCGD} and \algo{ANC-finding} as
\begin{align}
\left\{\vect{x}_{1,0},\vect{x}_{1,1},\ldots,\vect{x}_{1,\mathscr{T}'}\right\},\qquad
\left\{\vect{x}_{2,0},\vect{x}_{2,1},\ldots,\vect{x}_{2,\mathscr{T}'}\right\},
\end{align}
we have
\begin{align}
\frac{\vect{x}_{1,\mathscr{T}'}-\tilde{\vect{x}}}{\|\vect{x}_{1,\mathscr{T}'}-\tilde{\vect{x}}\|}=\frac{\vect{x}_{2,\mathscr{T}'}-\tilde{\vect{x}}}{\|\vect{x}_{2,\mathscr{T}'}-\tilde{\vect{x}}\|}.
\end{align}
\end{lemma}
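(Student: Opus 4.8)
The plan is to show that the two iterate sequences stay \emph{positively proportional relative to the base point $\tilde{\vect{x}}$} throughout the window, so that normalizing at step $\mathscr{T}'$ forces their outputs to agree. Shift so that $\tilde{\vect{x}}$ plays the role of the origin and set $\tilde g(\vect{w}):=\nabla f(\tilde{\vect{x}}+\vect{w})-\nabla f(\tilde{\vect{x}})$; write $\vect{x}^{\mathrm{pre}}_{t},\vect{z}^{\mathrm{pre}}_{t}$ for the \algo{ANCGD} iterates produced at iteration $t-1$ before the renormalization line, and $\vect{x}_{1,t},\vect{z}_{1,t}$ for those after it. Because the uniform perturbation is added at $t=0$, we have $\zeta=\nabla f(\tilde{\vect{x}})$, so inside the window \algo{ANCGD} does
\begin{align}
\vect{x}^{\mathrm{pre}}_{t+1}-\tilde{\vect{x}}=(\vect{z}_{1,t}-\tilde{\vect{x}})-\eta\,\tilde g(\vect{z}_{1,t}-\tilde{\vect{x}}),
\end{align}
then the affine steps $\vect{v}_{t+1}=\vect{x}^{\mathrm{pre}}_{t+1}-\vect{x}_{1,t}$ and $\vect{z}^{\mathrm{pre}}_{t+1}-\tilde{\vect{x}}=(2-\theta)(\vect{x}^{\mathrm{pre}}_{t+1}-\tilde{\vect{x}})-(1-\theta)(\vect{x}_{1,t}-\tilde{\vect{x}})$, then multiplies \emph{both} $\vect{x}^{\mathrm{pre}}_{t+1}-\tilde{\vect{x}}$ and $\vect{z}^{\mathrm{pre}}_{t+1}-\tilde{\vect{x}}$ by the common scalar $r'/\|\vect{z}^{\mathrm{pre}}_{t+1}-\tilde{\vect{x}}\|$ to obtain $\vect{x}_{1,t+1},\vect{z}_{1,t+1}$; in particular $\|\vect{z}_{1,t}-\tilde{\vect{x}}\|=r'$ for $t\ge1$. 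Expressed through $\tilde g$, the \algo{ANC-finding} update is $\vect{x}_{2,t+1}-\tilde{\vect{x}}=(\vect{z}_{2,t}-\tilde{\vect{x}})-\eta\,G(\vect{z}_{2,t}-\tilde{\vect{x}})$ with $G(\vect{w}):=\tfrac{\|\vect{w}\|}{r'}\,\tilde g\!\big(\tfrac{r'}{\|\vect{w}\|}\vect{w}\big)$, followed by the identical affine $\vect v$- and $\vect z$-steps but \emph{without} renormalization. The one structural fact I will lean on is that $G$ is positively homogeneous of degree one, $G(\lambda\vect{w})=\lambda G(\vect{w})$ for $\lambda>0$, and that $G(\vect{w})=\tilde g(\vect{w})$ whenever $\|\vect{w}\|=r'$.

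I would then prove by induction the claim $P(t)$: there is $c_t>0$ with $\vect{x}_{2,t}-\tilde{\vect{x}}=c_t(\vect{x}_{1,t}-\tilde{\vect{x}})$ and $\vect{z}_{2,t}-\tilde{\vect{x}}=c_t(\vect{z}_{1,t}-\tilde{\vect{x}})$ (and $\|\vect{z}_{1,t}-\tilde{\vect{x}}\|=r'$). The base case is a direct computation from the shared start $\vect{x}_0$, $\vect{z}_0=\vect{x}_0$, using $G(\vect{z}_0-\tilde{\vect{x}})=\tilde g(\vect{z}_0-\tilde{\vect{x}})$ (valid since $\vect{x}_0$ lies on $\partial\mathbb{B}_{\tilde{\vect{x}}}(r')$) and reading off $c_1>0$ from \algo{ANCGD}'s first renormalization factor. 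For the inductive step, plug $\vect{z}_{2,t}-\tilde{\vect{x}}=c_t(\vect{z}_{1,t}-\tilde{\vect{x}})$ into the \algo{ANC-finding} update and combine homogeneity with $\|\vect{z}_{1,t}-\tilde{\vect{x}}\|=r'$:
\begin{align}
G\big(c_t(\vect{z}_{1,t}-\tilde{\vect{x}})\big)=c_t\,G(\vect{z}_{1,t}-\tilde{\vect{x}})=c_t\,\tilde g(\vect{z}_{1,t}-\tilde{\vect{x}}),
\end{align}
whence $\vect{x}_{2,t+1}-\tilde{\vect{x}}=c_t\big[(\vect{z}_{1,t}-\tilde{\vect{x}})-\eta\,\tilde g(\vect{z}_{1,t}-\tilde{\vect{x}})\big]=c_t(\vect{x}^{\mathrm{pre}}_{t+1}-\tilde{\vect{x}})$. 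The $\vect v$- and $\vect z$-steps are linear in the shifted variables and use only $\vect{x}^{\mathrm{pre}}_{t+1}-\tilde{\vect{x}}$ and $\vect{x}_{1,t}-\tilde{\vect{x}}$, both now bearing the factor $c_t$, so also $\vect{z}_{2,t+1}-\tilde{\vect{x}}=c_t(\vect{z}^{\mathrm{pre}}_{t+1}-\tilde{\vect{x}})$. Since \algo{ANCGD} next multiplies both $\vect{x}^{\mathrm{pre}}_{t+1}-\tilde{\vect{x}}$ and $\vect{z}^{\mathrm{pre}}_{t+1}-\tilde{\vect{x}}$ by the \emph{same} positive factor $r'/\|\vect{z}^{\mathrm{pre}}_{t+1}-\tilde{\vect{x}}\|$, the claim $P(t+1)$ follows with $c_{t+1}:=c_t\|\vect{z}^{\mathrm{pre}}_{t+1}-\tilde{\vect{x}}\|/r'>0$ and $\|\vect{z}_{1,t+1}-\tilde{\vect{x}}\|=r'$; the sole caveat, $\vect{z}^{\mathrm{pre}}_{t+1}\ne\tilde{\vect{x}}$, holds almost surely over the perturbation.

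Applying $P(\mathscr{T}')$ gives $\vect{x}_{2,\mathscr{T}'}-\tilde{\vect{x}}=c_{\mathscr{T}'}(\vect{x}_{1,\mathscr{T}'}-\tilde{\vect{x}})$ with $c_{\mathscr{T}'}>0$, and dividing by the norm yields $\frac{\vect{x}_{1,\mathscr{T}'}-\tilde{\vect{x}}}{\|\vect{x}_{1,\mathscr{T}'}-\tilde{\vect{x}}\|}=\frac{\vect{x}_{2,\mathscr{T}'}-\tilde{\vect{x}}}{\|\vect{x}_{2,\mathscr{T}'}-\tilde{\vect{x}}\|}$, i.e.\ the output of \algo{ANC-finding} equals the unit vector $\hat{\vect{e}}$ extracted by \algo{ANCGD} at step $\mathscr{T}'$. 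I expect the main obstacle to be purely a matter of careful bookkeeping: keeping the pre-renormalization iterates $\vect{x}^{\mathrm{pre}},\vect{z}^{\mathrm{pre}}$ separate from the renormalized ones, and verifying that \algo{ANCGD}'s renormalization rescales the pair $(\vect{x}_{t+1}-\tilde{\vect{x}},\vect{z}_{t+1}-\tilde{\vect{x}})$ by a single common scalar — it is exactly this common scalar that keeps the two trajectories pointing in the same direction. A secondary subtlety is the base case, where matching the first step requires $\vect{x}_0$ to sit on $\partial\mathbb{B}_{\tilde{\vect{x}}}(r')$ and requires the $\nabla f(\tilde{\vect{x}})$-correction in \algo{ANC-finding} to be precisely the one for which $G$ and $\tilde g$ coincide on that sphere.
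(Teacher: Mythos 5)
Your proposal is correct and follows essentially the same inductive-proportionality argument the paper uses; your bookkeeping via an explicit scalar $c_t>0$ and the degree-one homogeneity of $G$ is a cleaner presentation of what the paper writes as the recurrence $\vect{x}_{2,k}/\|\vect{x}_{2,k}\|=\vect{x}_{1,k}/r'$, $\vect{z}_{2,k}/\|\vect{z}_{2,k}\|=\vect{z}_{1,k}/r'$. One thing you should state explicitly, as the paper does: reduce WLOG to $\nabla f(\tilde{\vect{x}})=\vect{0}$ (by replacing $f$ with $h_f(\vect{x})=f(\vect{x})-\langle\nabla f(\tilde{\vect{x}}),\vect{x}\rangle$, which leaves both algorithms' updates unchanged), because without this your $G(\vect{w})=\tfrac{\|\vect{w}\|}{r'}\tilde g(\tfrac{r'}{\|\vect{w}\|}\vect{w})$ differs from the actual \algo{ANC-finding} gradient by $(1-\|\vect{w}\|/r')\nabla f(\tilde{\vect{x}})$ and is not the algorithm's update off the sphere. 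The base-case caveat you flag — matching requires $\|\vect{x}_0-\tilde{\vect{x}}\|=r'$ even though the algorithms sample uniformly over the ball — is indeed present and is shared by the paper's own proof, so it is not a defect of your argument relative to theirs.
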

\begin{proof}
Without loss of generality, we assume $\tilde{\vect{x}}=\vect{0}$ and $\nabla f(\tilde{\vect{x}})=\vect{0}$. Use recurrence to prove the desired relationship. Suppose the following identities holds for all $k\leq t$ with $t$ being some natural number:
\begin{align}\label{eqn:AGD-recurrence}
\frac{\vect{x}_{2,k}}{\|\vect{x}_{2,k}\|}=\frac{\vect{x}_{1,k}}{r},
\qquad\frac{\vect{z}_{2,k}}{\|\vect{x}_{2,k}\|}=\frac{\vect{z}_{1,k}}{r'}.
\end{align}
Then,
\begin{align}
\vect{x}_{2,t+1}=\vect{z}_{2,t}-\eta\cdot\frac{\|\vect{z}_{2,t}\|}{r'}\nabla f(\vect{z}_{2,t}\cdot r'/\|\vect{z}_{2,t}\|)=\frac{\|\vect{z}_{2,t}\|}{r'}(\vect{z}_{1,t}-\eta\nabla f(\vect{z}_{1,t})).
\end{align}
Adopting the notation in \algo{ANCGD}, we use $\vect{x}_{1,t+1}'$ and $\vect{z}_{1,t+1}'$ to separately denote the value of $\vect{x}_{1,t+1}$ and $\vect{z}_{1,t+1}$ before renormalization:
\begin{align}
\vect{x}_{1,t+1}'=\vect{z}_{1,t}-\eta\nabla f(\vect{z}_{1,t}),\quad
\vect{z}_{1,t+1}'=\vect{x}_{1,t+1}'+(1-\theta)(\vect{x}_{1,t+1}'-\vect{x}_{1,t}).
\end{align}
Then,
\begin{align}
\vect{x}_{2,t+1}=\frac{\|\vect{z}_{2,t}\|}{r'}(\vect{z}_{1,t}-\eta \nabla f(\vect{z}_{1,t}))=\frac{\|\vect{z}_{2,t}\|}{r'}\cdot\vect{x}_{1,t+1}',
\end{align}
which further leads to
\begin{align}\label{eqn:y-recurrence}
\vect{z}_{2,t+1}=\vect{x}_{2,t+1}+(1-\theta)(\vect{x}_{2,t+1}-\vect{x}_{2,t})=\frac{\|\vect{z}_{2,t}\|}{r'}\cdot\vect{z}_{1,t+1}'.
\end{align}
Note that $\vect{z}_{1,t+1}=\frac{r'}{\|\vect{z}_{1,t+1}'\|}\cdot\vect{z}_{1,t+1}'$, we thus have
\begin{align}
\frac{\vect{z}_{2,t+1}}{\|\vect{z}_{2,t+1}\|}=\frac{\vect{z}_{1,t+1}}{r'}.
\end{align}
Hence,
\begin{align}
\vect{x}_{2,t+1}=\frac{\|\vect{z}_{2,t}\|}{r'}\cdot\vect{x}_{1,t+1}'=\frac{\|\vect{z}_{2,t}\|}{r'}\cdot\frac{\|\vect{z}_{1,t+1}\|}{\|\vect{z}_{1,t}\|}\cdot\vect{x}_{1,t+1}=\frac{\|\vect{z}_{2,t+1}\|}{r'}\cdot \vect{x}_{1,t+1}.
\end{align}
Since \eqn{AGD-recurrence} holds for $k=0$, we can now claim that it also holds for $k=\mathscr{T}'$.
\end{proof}
\lem{ANC-same} shows that, \algo{ANC-finding} also works in principle for finding the negative curvature near any saddle point $\tilde{\vect{x}}$. But considering that \algo{ANC-finding} may result in an exponentially large $\|\vect{x}_{t}\|$ during execution, and it is hard to be merged with the AGD algorithm for large gradient scenarios. Hence, only \algo{ANCGD} is applicable in practical situations.

Use $\mathcal{H}(\tilde{\vect{x}})$ to denote the Hessian matrix of $f$ at $\tilde{\vect{x}}$. Observe that $\mathcal{H}(\tilde{\vect{x}})$ admits the following eigen-decomposition:
\begin{align}\label{eqn:AGD-Hessian-decomposition}
\mathcal{H}(\tilde{\vect{x}})=\sum_{i=1}^{n}\lambda_i\vect{u}_{i}\vect{u}_i^{T},
\end{align}
where the set $\{\vect{u}_i\}_{i=1}^{n}$ forms an orthonormal basis of $\mathbb{R}^n$. Without loss of generality, we assume the eigenvalues $\lambda_1,\lambda_2,\ldots,\lambda_n$ corresponding to $\vect{u}_1,\vect{u}_2,\ldots,\vect{u}_n$ satisfy
\begin{align}
\lambda_1\leq\lambda_2\leq\cdots\leq\lambda_n,
\end{align}
in which $\lambda_1\leq-\sqrt{\rho\epsilon}$. If $\lambda_n\leq-\sqrt{\rho\epsilon}/2$, \prop{ANC-finding} holds directly, since no matter the value of $\hat{\vect{e}}$, we can have $f(\vect{x}_{\mathscr{T}'})-f(\tilde{\vect{x}})\leq-\sqrt{\epsilon^3/\rho}/384$. Hence, we only need to prove the case where $\lambda_n>-\sqrt{\rho\epsilon}/2$, where there exists some $p>1$ with
\begin{align}
\lambda_p\leq -\sqrt{\rho\epsilon}\leq \lambda_{p+1}.
\end{align}
We use $\mathfrak{S}_{\parallel}$ to denote the subspace of $\mathbb{R}^{n}$ spanned by $\left\{\vect{u}_1,\vect{u}_2,\ldots,\vect{u}_p\right\}$, and use $\mathfrak{S}_{\perp}$ to denote the subspace spanned by $\left\{\vect{u}_{p+1},\vect{u}_{p+2},\ldots,\vect{u}_{n}\right\}$. Then we can have the following lemma:
\begin{lemma}\label{lem:AGD-component-lowerbound}
Under the setting of \prop{ANC-finding}, we use $\alpha_{t}'$ to denote
\begin{align}
\alpha_{t}'=\frac{\|\vect{x}_{t,\parallel}-\tilde{\vect{x}}_{\parallel}\|}{\|\vect{x}_{t}-\tilde{\vect{x}}\|},
\end{align}
in which $\vect{x}_{t,\parallel}$ is the component of $\vect{x}_t$ in the subspace $\mathfrak{S}_{\parallel}$. Then, during all the $\mathscr{T}'$ iterations of \algo{ANC-finding}, we have $\alpha_{t}'\geq\alpha_{\min}'$ for
\begin{align}
\alpha_{\min}'=\frac{\delta_0}{8}\sqrt{\frac{\pi}{n}},
\end{align}
given that $\alpha_{0}'\geq\sqrt{\frac{\pi}{n}}\delta_0$.
\end{lemma}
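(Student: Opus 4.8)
The plan is to transplant the proof of \lem{GD-component-lowerbound} to the momentum recursion driving \algo{ANC-finding}. Work in the eigenbasis $\{\vect{u}_i\}_{i=1}^n$ of $\mathcal{H}(\tilde{\vect{x}})$ from \eqn{AGD-Hessian-decomposition}, write $\vect{w}_t:=\vect{x}_t-\tilde{\vect{x}}$ (with $\vect{w}_{t,\parallel},\vect{w}_{t,\perp}$ its components in $\mathfrak{S}_{\parallel},\mathfrak{S}_{\perp}$, so $\alpha_t'=\|\vect{w}_{t,\parallel}\|/\|\vect{w}_t\|$), and observe that, after subtracting off $\nabla f(\tilde{\vect{x}})$ exactly as in the definition of $h_f$, one step of \algo{ANC-finding} reads
\begin{align*}
\vect{w}_{t+1}=\big(I-\eta\mathcal{H}(\tilde{\vect{x}})\big)\big(\vect{w}_t+(1-\theta)(\vect{w}_t-\vect{w}_{t-1})\big)+\Delta_t ,
\end{align*}
where, because the rescaling inside \algo{ANC-finding} evaluates $\nabla f$ at a point exactly at distance $r'$ from $\tilde{\vect{x}}$, the deviation satisfies $\|\Delta_t\|\le\eta\rho r'\,\|\vect{z}_t-\tilde{\vect{x}}\|$ by \eqn{gradient-approximation-0} — this relative bound survives even though $\|\vect{z}_t-\tilde{\vect{x}}\|$ may blow up, since \algo{ANC-finding} does not renormalize. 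As $I-\eta\mathcal{H}(\tilde{\vect{x}})$ is diagonal in $\{\vect{u}_i\}$, the noiseless part decouples into $n$ scalar second-order recursions $w_{t+1,i}=(1-\eta\lambda_i)\big((2-\theta)w_{t,i}-(1-\theta)w_{t-1,i}\big)$, each governed by the companion matrix $A_{\lambda_i}$ of $\mu^2-(2-\theta)(1-\eta\lambda_i)\mu+(1-\theta)(1-\eta\lambda_i)$; with the parameters of \eqn{parameter-choice-AGD}, a direct computation shows $A_{-\sqrt{\rho\epsilon}}$ has a real dominant root $\mu_+=1+\Theta(\theta)$, that $\mu_+(\lambda)$ is increasing in $-\lambda$ for $\lambda\le0$, and that $\mu_+(\lambda)\le1$ once $\lambda\ge0$, so in particular $\mu_+(\lambda)\le\mu_+$ for every $\lambda\ge-\sqrt{\rho\epsilon}$.

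First I would prove the accelerated analogue of \lem{GD-worstcase}: for every $t$ and every fixed $\vect{x}_0$, $\alpha_t'$ is smallest in the landscape $\lambda_1=\dots=\lambda_{n-1}=-\sqrt{\rho\epsilon}$ (so $\mathfrak{S}_{\parallel}=\spn\{\vect{u}_1,\dots,\vect{u}_{n-1}\}$ and $\mathfrak{S}_{\perp}=\spn\{\vect{u}_n\}$) with the deviation aimed adversarially — its $\mathfrak{S}_{\perp}$-part along $\vect{w}_{t,\perp}$ and its $\mathfrak{S}_{\parallel}$-part opposite to $\vect{w}_{t,\parallel}$. This reduction is crucial, not merely stylistic: it is exactly what prevents the deviation from being amplified at the fast rate $\mu_+(\lambda_1)$ of a strongly curved direction and leaking into $\mathfrak{S}_{\perp}$. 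The argument is the same coupling as in \lem{GD-worstcase}: moving any $\lambda_k$ with $k\le n-1$ to $-\sqrt{\rho\epsilon}$ shrinks $|w_{t,k}|$ when $k\le p$ and inflates it when $k>p$, while leaving every other coordinate fixed, and either way $\alpha_t'$ drops; the one new point is that the comparison now runs over the two-dimensional states $(w_{t,k},w_{t-1,k})$, so the monotonicity of $|w_{t,k}|$ in $1-\eta\lambda_k$ — transparent for GD — now needs a short two-step induction that handles the over/under-damped transition of $A_\lambda$ (it is immediate for $\lambda_k\le-\sqrt{\rho\epsilon}$, where $A_\lambda$ is over-damped, and reduces to a finite elementary inequality otherwise since $\vect{z}_0=\vect{x}_0$ gives zero initial momentum). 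As in the GD proof I would additionally take the worst $\vect{x}_0$, namely $\alpha_0'=\sqrt{\pi/n}\,\delta_0$, so that $\|\vect{w}_{0,\perp}\|\ge\|\vect{w}_0\|/2$.

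Next, in this worst-case landscape I would set up the two one-step estimates exactly as in \lem{GD-component-lowerbound}: an upper bound $\|\vect{w}_{t+1,\perp}\|\le(\mu_++\eta\rho r')\|\vect{w}_{t,\perp}\|$, valid because over the $\mathscr{T}'$ steps of \eqn{parameter-choice-AGD} the growth of $A_{\lambda_n}$ with $\lambda_n>-\sqrt{\rho\epsilon}$ is at most $\mu_+$ up to a $\poly(\theta^{-1})$ prefactor that is absorbed by the logarithmic factor in $\mathscr{T}'$ — this is precisely the $\alpha(t),\beta(t)$ bookkeeping of \lem{stochastic-ab} (equivalently, the companion-matrix estimates of \cite{jin2018accelerated}) — and a lower bound $\|\vect{w}_{t+1,\parallel}\|\ge(\mu_+-\eta\rho r'/\alpha_{\min}')\|\vect{w}_{t,\parallel}\|$, using that every parallel direction now has eigenvalue $-\sqrt{\rho\epsilon}$ together with the running hypothesis $\alpha_t'\ge\alpha_{\min}'$. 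Dividing and invoking \eqn{parameter-choice-AGD} to make $\eta\rho r'$ and $\eta\rho r'/\alpha_{\min}'$ both $o(1/\mathscr{T}')$, the ratio $\|\vect{w}_{t,\parallel}\|/\|\vect{w}_{t,\perp}\|$ shrinks by at most $\big((\mu_+-\eta\rho r'/\alpha_{\min}')/(\mu_++\eta\rho r')\big)^{\mathscr{T}'}\ge(1-1/\mathscr{T}')^{\mathscr{T}'}$, which is bounded below by an absolute constant, for all $t\le\mathscr{T}'$; hence $\alpha_t'\ge\tfrac18\alpha_0'\ge\alpha_{\min}'$, and a one-line induction on $t$ with base case $\alpha_0'\ge\sqrt{\pi/n}\,\delta_0$ closes the argument, exactly as at the end of \lem{GD-component-lowerbound}.

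The main obstacle, relative to the first-order proof, is the momentum. The matrices $A_\lambda$ are non-normal, so $\mu_+(\lambda)^t$ does not by itself bound $\|A_\lambda^t\|$ for small $t$; near the real/complex transition of the spectrum of $A_\lambda$ the pertinent condition number is $\poly(\theta^{-1})$, and one must confirm that this overhead, together with the transient non-monotonicity that complicates the worst-case coupling, is swallowed by the logarithm in $\mathscr{T}'$. Equally, the accelerated gap is only $\Theta(\theta)=\Theta((\rho\epsilon)^{1/4}/\sqrt{\ell})$ rather than the $\Theta(\eta\sqrt{\rho\epsilon})$ available in \algo{NC-finding}, so the deviation budget is tighter: the whole estimate hinges on $\eta\rho r'\cdot\mathscr{T}'\ll1$ and $\eta\rho r'\cdot\mathscr{T}'/\alpha_{\min}'\ll1$, which amount — via the explicit $r'$ in \eqn{parameter-choice-AGD} — to a smallness condition on $\epsilon$ of the standard type that must be verified. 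All of these are covered by the machinery \cite{jin2018accelerated} already developed for AGD near saddle points, so no new idea beyond careful bookkeeping is required, and \lem{ANC-same} lets this analysis be carried out on the cleaner non-renormalized \algo{ANC-finding} and then transferred back to \algo{ANCGD}.
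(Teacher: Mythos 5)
Your overall plan matches the paper's route: reduce to the worst-case spectrum $\lambda_1=\cdots=\lambda_{n-1}=-\sqrt{\rho\epsilon}$ with an adversarially-aimed deviation (the paper, like you, leans on ``the same proof of \lem{GD-worstcase}'' here, and you are right to flag that the two-dimensional state $(w_{t,k},w_{t-1,k})$ makes that coupling more delicate than in the GD case), write the dynamics in the eigenbasis of $\mathcal{H}(\tilde{\vect{x}})$ as coupled second-order recurrences, track the ratio $\|\vect{x}_{t,\parallel}\|/\|\vect{x}_{t,\perp}\|$, and close by induction from $\alpha_0'\geq\sqrt{\pi/n}\,\delta_0$. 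The gap is in how you handle the momentum recurrence. The one-step estimate $\|\vect{w}_{t+1,\perp}\|\leq(\mu_++\eta\rho r')\|\vect{w}_{t,\perp}\|$ is not valid for a second-order recursion: a large $\|\vect{w}_{t-1,\perp}\|$ (incoming momentum) can push $\|\vect{w}_{t+1,\perp}\|$ far above $\mu_+\|\vect{w}_{t,\perp}\|$. You acknowledge exactly this non-normality obstruction, but the replacement you gesture at — \lem{stochastic-ab}-style $\alpha(t),\beta(t)$ bookkeeping plus a $\poly(\theta^{-1})$ prefactor ``absorbed by the logarithm'' — does not close it: \lem{stochastic-ab} is built for the scalar first-order contraction $(1+\eta\gamma)^t$, not the companion-matrix powers $A_\lambda^t$, and the prefactor claim is never verified.

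The paper sidesteps this entirely by solving the second-order linear recurrence exactly. Writing the characteristic roots as $\frac{1+\kappa}{2}(2-\theta\pm\mu)$ with $\mu:=\sqrt{(2-\theta)^2-\tfrac{4(1-\theta)}{1+\kappa}}$, it computes the coefficients $C_1,C_2$ of the closed form $\xi_t=\big(\tfrac{1+\kappa}{2}\big)^t\big(C_1(2-\theta+\mu)^t+C_2(2-\theta-\mu)^t\big)$ from the initial data $\xi_0$ and $\xi_1=(1+\kappa)\xi_0$, the latter encoding the zero-momentum start $\vect{z}_0=\vect{x}_0$ of \algo{ANC-finding}. Since both $C_1,C_2\geq 0$ with $C_1+C_2=\xi_0$, replacing the smaller root by the larger gives $\|\vect{x}_{t,\perp}\|\leq\|\vect{x}_{0,\perp}\|\cdot\mu_{+,\perp}^t$ directly, while keeping only the dominant term gives $\|\vect{x}_{t,\parallel}\|\geq\tfrac{\mu_\parallel+\theta}{2\mu_\parallel}\|\vect{x}_{0,\parallel}\|\cdot\mu_{+,\parallel}^t\geq\tfrac12\|\vect{x}_{0,\parallel}\|\cdot\mu_{+,\parallel}^t$, where $\mu_+=\tfrac{1+\kappa}{2}(2-\theta+\mu)$. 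The prefactor is thus the constant $2$, not $\poly(\theta^{-1})$, and there is nothing to absorb into $\mathscr{T}'$; the ratio bound then follows by comparing $\mu_{+,\parallel}$ with $\mu_{+,\perp}$ exactly as you anticipate, using $\eqn{parameter-choice-AGD}$ to make the per-step erosion $O(1/\mathscr{T}')$. Swapping in this explicit characteristic-root computation for your per-step bound closes the gap and leaves the rest of your argument intact.
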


\begin{proof}
Without loss of generality, we assume $\tilde{\vect{x}}=\vect{0}$ and $\nabla f(\tilde{\vect{x}})=\vect{0}$. In this proof, we consider the worst case, where the initial value $\alpha_{0}'=\sqrt{\frac{\pi}{n}}\delta_0$ and the component $x_{0,n}$ along $\vect{u}_n$ equals 0. In addition, according to the same proof of \lem{GD-worstcase}, we assume that the eigenvalues satisfy
\begin{align}
\lambda_1=\lambda_2=\lambda_3=\cdots=\lambda_p=\lambda_{p+1}=\lambda_{p+2}=\cdots=\lambda_{n-1}=-\sqrt{\rho\epsilon}.
\end{align}
Out of the same reason, we assume that each time we make a gradient call at point $\vect{z}_t$, the derivation term $\Delta$ from pure quadratic approximation
\begin{align}
\Delta=\frac{\|\vect{z}_t\|}{r'}\cdot\Big(\nabla f(\vect{z}_t\cdot r'/\|\vect{z}_t\|)-\mathcal{H}(\vect{0})\cdot\frac{r'}{\|\vect{z}_t\|}\cdot\vect{z}_t\Big)
\end{align}
lies in the direction that can make $\alpha_{t}'$ as small as possible. Then, the component $\Delta_{\parallel}$ in $\mathfrak{S}_{\parallel}$ should be in the opposite direction to $\vect{v}_{\parallel}$, and the component $\Delta_{\perp}$ in $\mathfrak{S}_{\perp}$ should be in the direction of $\vect{v}_{\perp}$. Hence in this case, we have both $\|\vect{x}_{t,\perp}\|/\|\vect{x}_t\|$ and $\|\vect{z}_{t,\perp}\|/\|\vect{z}_t\|$ being non-decreasing. Also, it admits the following recurrence formula:
\begin{align}
\|\vect{x}_{t+2,\perp}\|&\leq (1+\eta\sqrt{\rho\epsilon})\big(\|\vect{x}_{t+1,\perp}\|+(1-\theta)(\|\vect{x}_{t+1,\perp}\|-\|\vect{x}_{t,\perp}\|)\big)+\eta\|\Delta_{\perp}\|.
\end{align}
Since $\|\vect{x}_{t,\perp}\|/\|\vect{x}_t\|$ is non-decreasing in this worst-case scenario, we have
\begin{align}
\frac{\|\Delta_{\perp}\|}{\|\vect{x}_{t+1,\perp}\|}\leq \frac{\|\Delta\|}{\|\vect{x}_{t+1}\|}\cdot\frac{\|\vect{x}_0\|}{\|\vect{x}_{0,\perp}\|}\leq \frac{2\|\Delta\|}{\|\vect{x}_{t+1}\|}\leq 2\rho r',
\end{align}
which leads to
\begin{align}\label{eqn:perp-recurrence}
\|\vect{x}_{t+2,\perp}\|\leq(1+\eta\sqrt{\rho\epsilon}+2\eta\rho r')\big((2-\theta)\|\vect{x}_{t+1,\perp}\|-(1-\theta)\|\vect{x}_{t,\perp}\|\big).
\end{align}
On the other hand, suppose for some value $t$, we have $\alpha_{k}'\geq\alpha_{\min}'$ with any $1\leq k\leq t+1$. Then,
\begin{align}
\|\vect{x}_{t+2,\parallel}\|&\geq(1+\eta(\sqrt{\rho\epsilon}-\nu))\big(\|\vect{x}_{t+1,\parallel}\|+(1-\theta)(\|\vect{x}_{t+1,\parallel}\|-\|\vect{x}_{t,\parallel}\|)\big)+\eta\|\Delta_{\parallel}\|\\
&\geq(1+\eta\sqrt{\rho\epsilon})\big(\|\vect{x}_{t+1,\parallel}\|+(1-\theta)(\|\vect{x}_{t+1,\parallel}\|-\|\vect{x}_{t,\parallel}\|)\big)-\eta\|\Delta\|.
\end{align}
Note that since $\|\vect{x}_{t+1,\parallel}\|/\|\vect{x}_t\|\geq\alpha_{\min}'$ for all $t>0$, we also have
\begin{align}
\frac{\|\vect{x}_{t+1,\parallel}\|}{\|\vect{x}_t\|}\geq\alpha_{\min}',\quad\forall t\geq 0,
\end{align}
which further leads to 
\begin{align}
\frac{\|\Delta\|}{\|\vect{z}_{t+1,\parallel}\|}\geq \frac{\|\Delta\|}{\alpha_{\min}'\|\vect{z}_{t+1}\|}=\rho r'/\alpha_{\min}',
\end{align}
which leads to
\begin{align}\label{eqn:parallel-recurrence}
\|\vect{x}_{t+2,\parallel}\|\geq(1+\eta\sqrt{\rho\epsilon}-\eta\rho r'/\alpha_{\min}')\big((2-\theta)\|\vect{x}_{t+1,\parallel}\|-(1-\theta)\|\vect{x}_{t,\parallel}\|\big).
\end{align}
Consider the sequences with recurrence that can be written as
\begin{align}
\xi_{t+2}=(1+\kappa)\big((2-\theta)\xi_{t+1}-(1-\theta)\xi_{t}\big)
\end{align}
for some $\kappa>0$. Its characteristic equation can be written as
\begin{align}
x^2-(1+\kappa)(2-\theta)x+(1+\kappa)(1-\theta)=0,
\end{align}
whose roots satisfy
\begin{align}
x=\frac{1+\kappa}{2}\Big((2-\theta)\pm\sqrt{(2-\theta)^2-\frac{4(1-\theta)}{1+\kappa}}\Big),
\end{align}
indicating
\begin{align}
\xi_{t}=\Big(\frac{1+\kappa}{2}\Big)^t\big(C_1(2-\theta+\mu)^t+C_2(2-\theta-\mu)^t\big),
\end{align}
where $\mu:=\sqrt{(2-\theta)^2-\frac{4(1-\theta)}{1+\kappa}}$, for constants $C_1$ and $C_2$ being
\begin{align}
\left\{
\begin{aligned}
&C_1=-\frac{2-\theta-\mu}{2\mu}\xi_0+\frac{1}{(1+\kappa)\mu}\xi_1,
\\
&C_2=\frac{2-\theta+\mu}{2\mu}\xi_0-\frac{1}{(1+\kappa)\mu}\xi_1.
\end{aligned}\right.
\end{align}
Then by the inequalities \eqn{perp-recurrence} and \eqn{parallel-recurrence}, as long as $\alpha_{k}'\geq\alpha_{\min}'$ for any $1\leq k\leq t-1$, the values $\|\vect{x}_{t,\perp}\|$ and $\|\vect{x}_{t,\parallel}\|$ satisfy
\begin{align}
\|\vect{x}_{t,\perp}\|&\leq
\Big(-\frac{2-\theta-\mu_{\perp}}{2\mu_{\perp}}\xi_{0,\perp}+\frac{1}{(1+\kappa_{\perp})\mu_{\perp}}\xi_{1,\perp}\Big)\cdot
\Big(\frac{1+\kappa_{\perp}}{2}\Big)^t\cdot(2-\theta+\mu_{\perp})^t\\
&\quad+\Big(\frac{2-\theta+\mu_{\perp}}{2\mu_{\perp}}\xi_{0,\perp}-\frac{1}{(1+\kappa_{\perp})\mu_{\perp}}\xi_{1,\perp}\Big)\cdot
\Big(\frac{1+\kappa_{\perp}}{2}\Big)^t\cdot(2-\theta-\mu_{\perp})^t,
\end{align}
and
\begin{align}
\|\vect{x}_{t,\parallel}\|&\geq
\Big(-\frac{2-\theta-\mu_{\parallel}}{2\mu_{\parallel}}\xi_{0,\parallel}+\frac{1}{(1+\kappa_{\parallel})\mu_{\parallel}}\xi_{1,\parallel}\Big)\cdot
\Big(\frac{1+\kappa_{\parallel}}{2}\Big)^t\cdot(2-\theta+\mu_{\parallel})^t\\
&\quad+\Big(\frac{2-\theta+\mu_{\parallel}}{2\mu_{\parallel}}\xi_{0,\parallel}-\frac{1}{(1+\kappa_{\parallel})\mu_{\parallel}}\xi_{1,\parallel}\Big)\cdot
\Big(\frac{1+\kappa_{\parallel}}{2}\Big)^t\cdot(2-\theta-\mu_{\parallel})^t,
\end{align}
where
\begin{align}
\kappa_{\perp}&=\eta\sqrt{\rho\epsilon}+2\eta\rho r',
\qquad &\xi_{0,\perp}&=\|\vect{x}_{0,\perp}\|,
\qquad &\xi_{1,\perp}&=(1+\kappa_{\perp})\xi_{0,\perp},\\
\kappa_{\parallel}&=\eta\sqrt{\rho\epsilon}-\eta\rho r'/\alpha_{\min}',
\qquad &\xi_{0,\parallel}&=\|\vect{x}_{0,\parallel}\|,
\qquad &\xi_{1,\parallel}&=(1+\kappa_{\parallel})\xi_{0,\parallel}.
\end{align}
Furthermore, we can derive that
\begin{align}
\|\vect{x}_{t,\perp}\|&\leq
\Big(-\frac{2-\theta-\mu_{\perp}}{2\mu_{\perp}}\xi_{0,\perp}+\frac{1}{(1+\kappa_{\perp})\mu_{\perp}}\xi_{1,\perp}\Big)\cdot
\Big(\frac{1+\kappa_{\perp}}{2}\Big)^t\cdot(2-\theta+\mu_{\perp})^t\\
&\quad+\Big(\frac{2-\theta+\mu_{\perp}}{2\mu_{\perp}}\xi_{0,\perp}-\frac{1}{(1+\kappa_{\perp})\mu_{\perp}}\xi_{1,\perp}\Big)\cdot
\Big(\frac{1+\kappa_{\perp}}{2}\Big)^t\cdot(2-\theta+\mu_{\perp})^t\\
&\leq \xi_{0,\perp}\cdot\Big(\frac{1+\kappa_{\perp}}{2}\Big)^t\cdot(2-\theta+\mu_{\perp})^t=\|\vect{x}_{0,\perp}\|\cdot\Big(\frac{1+\kappa_{\perp}}{2}\Big)^t\cdot(2-\theta+\mu_{\perp})^t,
\end{align}
and
\begin{align}
\|\vect{x}_{t,\parallel}\|&\geq
\Big(-\frac{2-\theta-\mu_{\parallel}}{2\mu_{\parallel}}\xi_{0,\parallel}+\frac{1}{(1+\kappa_{\parallel})\mu_{\parallel}}\xi_{1,\parallel}\Big)\cdot
\Big(\frac{1+\kappa_{\parallel}}{2}\Big)^t\cdot(2-\theta+\mu_{\parallel})^t\\
&\quad+\Big(\frac{2-\theta+\mu_{\parallel}}{2\mu_{\parallel}}\xi_{0,\parallel}-\frac{1}{(1+\kappa_{\parallel})\mu_{\parallel}}\xi_{1,\parallel}\Big)\cdot
\Big(\frac{1+\kappa_{\parallel}}{2}\Big)^t\cdot(2-\theta-\mu_{\parallel})^t\\
&\geq\Big(-\frac{2-\theta-\mu_{\parallel}}{2\mu_{\parallel}}\xi_{0,\parallel}+\frac{1}{(1+\kappa_{\parallel})\mu_{\parallel}}\xi_{1,\parallel}\Big)\cdot
\Big(\frac{1+\kappa_{\parallel}}{2}\Big)^t\cdot(2-\theta+\mu_{\parallel})^t\\
&=\frac{\mu_{\parallel}+\theta}{2\mu_{\parallel}}\xi_{0,\parallel}\cdot\Big(\frac{1+\kappa_{\parallel}}{2}\Big)^t\cdot(2-\theta+\mu_{\parallel})^t\\
&\geq\frac{\|\vect{x}_{0,\parallel}\|}{2}\cdot\Big(\frac{1+\kappa_{\parallel}}{2}\Big)^t\cdot(2-\theta+\mu_{\parallel})^t.
\end{align}
Then we can observe that
\begin{align}
\frac{\|\vect{x}_{t,\parallel}\|}{\|\vect{x}_{t,\perp}\|}\geq \frac{\|\vect{x}_{0,\parallel}\|}{2\|\vect{x}_{0,\perp}\|}\cdot\Big(\frac{1+\kappa_{\parallel}}{1+\kappa_{\perp}}\Big)^t\cdot\Big(\frac{2-\theta+\mu_{\parallel}}{2-\theta+\mu_{\perp}}\Big)^t,
\end{align}
where
\begin{align}
\frac{1+\kappa_{\parallel}}{1+\kappa_{\perp}}&\geq (1+\kappa_{\parallel})(1-\kappa_{\perp})\\
&\geq 1-(2+1/\alpha_{\min}')\eta\rho r'-\kappa_{\parallel}\kappa_{\perp}\\
&\geq 1-2\eta\rho r'/\alpha'_{\min},
\end{align}
and
\begin{align}
\frac{2-\theta+\mu_{\parallel}}{2-\theta+\mu_{\perp}}
&=\frac{1+\mu_{\parallel}/(2-\theta)}{1+\mu_{\perp}/(2-\theta)}\\
&=\frac{1+\sqrt{1-\frac{4(1-\theta)}{(1+\kappa_{\parallel})(2-\theta)^2}}}{1+\sqrt{1-\frac{4(1-\theta)}{(1+\kappa_{\perp})(2-\theta)^2}}}\\
&\geq \Big(1+\frac{1}{2-\theta}\sqrt{\frac{\theta^2+\kappa_{\parallel}(2-\theta)^2}{1+\kappa_{\parallel}}}\Big)\Big(1-\frac{1}{2-\theta}\sqrt{\frac{\theta^2+\kappa_{\perp}(2-\theta)^2}{1+\kappa_{\perp}}}\Big)\\
&\geq 1-\frac{2(\kappa_{\perp}-\kappa_{\parallel})}{\theta}\geq 1-\frac{3\eta\rho r'}{\alpha_{\min}'\theta},
\end{align}
by which we can derive that
\begin{align}
\frac{\|\vect{x}_{t,\parallel\|}}{\|\vect{x}_{t,\perp}\|}
&\geq \frac{\|\vect{x}_{0,\parallel}\|}{2\|\vect{x}_{0,\perp}\|}\cdot\Big(1-\frac{4\rho r'}{\alpha_{\min}'\theta}\Big)^t\\
&\geq \frac{\|\vect{x}_{0,\parallel}\|}{2\|\vect{x}_{0,\perp}\|}(1-1/\mathscr{T}')^{t}\\
&\geq \frac{\|\vect{x}_{0,\parallel}\|}{2\|\vect{x}_{0,\perp}\|}\exp\Big(-\frac{t}{\mathscr{T}'-1}\Big)\geq\frac{\|\vect{x}_{0,\parallel}\|}{4\|\vect{x}_{0,\perp}\|},
\end{align}
indicating
\begin{align}
\alpha_{t}'=\frac{\|\vect{x}_{t,\parallel}\|}{\sqrt{\|\vect{x}_{t,\parallel}\|^2+\|\vect{x}_{t,\perp}\|^2}}\geq \frac{\|\vect{x}_{0,\parallel}\|}{8\|\vect{x}_{0,\perp}\|}\geq \alpha_{\min}'.
\end{align}
Hence, as long as $\alpha_{k}'\geq\alpha_{\min}'$ for any $1\leq k\leq t-1$, we can also have $\alpha_{t}'\geq \alpha_{\min}'$ if $t\leq \mathscr{T}'$. Since we have $\alpha_{0}'\geq \alpha_{\min}'$ and $\alpha_{1}'\geq \alpha_{\min}'$, we can claim that $\alpha_{t}'\geq \alpha_{\min}'$ for any $t\leq \mathscr{T}'$ using recurrence.
\end{proof}

Equipped with \lem{AGD-component-lowerbound}, we are now ready to prove \prop{ANC-finding}.
\begin{proof}
By \lem{ANC-same}, the unit vector $\hat{\vect{e}}$ in  \lin{NC-direction} of \algo{ANCGD} obtained after $\mathscr{T}'$ iterations equals to the output of \algo{ANC-finding} starting from $\tilde{\vect{x}}$. Hence in this proof we consider the output of \algo{ANC-finding} instead of the original \algo{ANCGD}.

If $\lambda_n\leq-\sqrt{\rho\epsilon}/2$, \prop{ANC-finding} holds directly. Hence, we only need to prove the case where $\lambda_n>-\sqrt{\rho\epsilon}/2$, in which there exists some $p'$ with
\begin{align}
\lambda_p'\leq -\sqrt{\rho\epsilon}/2< \lambda_{p+1}.
\end{align}
We use $\mathfrak{S}_{\parallel}'$, $\mathfrak{S}_{\perp}'$ to denote the subspace of $\mathbb{R}^{n}$ spanned by $\left\{\vect{u}_1,\vect{u}_2,\ldots,\vect{u}_{p'}\right\}$, $\left\{\vect{u}_{p'+1},\vect{u}_{p+2},\ldots,\vect{u}_{n}\right\}$. Furthermore, we define $\vect{x}_{t,\parallel'}:=\sum_{i=1}^{p'}\left\<\vect{u}_i,\vect{x}_t\right\>\vect{u}_i$,
$\vect{x}_{t,\perp'}:=\sum_{i=p'}^n\left\<\vect{u}_i,\vect{x}_t\right\>\vect{u}_i$, $\vect{v}_{t,\parallel'}:=\sum_{i=1}^{p'}\left\<\vect{u}_i,\vect{v}_t\right\>\vect{u}_i$,
$\vect{v}_{t,\perp'}:=\sum_{i=p'}^n\left\<\vect{u}_i,\vect{v}_t\right\>\vect{u}_i$
respectively to denote the component of $\vect{x}_t'$ and $\vect{v}_t'$ in \algo{ANC-finding} in the subspaces $\mathfrak{S}_{\parallel}'$, $\mathfrak{S}_{\perp}'$, and let $\alpha_{t}':=\|\vect{x}_{t,\parallel}\|/\|\vect{x}_{t}\|$.  Consider the case where $\alpha_{0}'\geq\sqrt{\frac{\pi}{n}}\delta_0$, which can be achieved with probability
\begin{align}
\Pr\left\{\alpha_0'\geq\sqrt{\frac{\pi}{n}}\delta_0\right\}\geq1-\sqrt{\frac{\pi}{n}}\delta_0\cdot\frac{\text{Vol}(\mathbb{B}_0^{n-1}(1))}{\text{Vol}(\mathbb{B}_0^{n}(1))}\geq 1-\sqrt{\frac{\pi}{n}}\delta_0\cdot\sqrt{\frac{n}{\pi}}=1-\delta_0,
\end{align}
 we prove that there exists some $t_0$ with $1\leq t_0\leq \mathscr{T}'$ such that
\begin{align}
\frac{\|\vect{x}_{t_0,\perp'}\|}{\|\vect{x}_{t_0}\|}\leq\frac{\sqrt{\rho\epsilon}}{8\ell}.
\end{align}
Assume the contrary, for any $t$ with $1\leq t\leq\mathscr{T}'$, we all have $\frac{\|\vect{x}_{t,\perp'}\|}{\|\vect{x}_{t}\|}>\frac{\sqrt{\rho\epsilon}}{8\ell}$ and $\frac{\|\vect{z}_{t,\perp'}\|}{\|\vect{z}_{t}\|}>\frac{\sqrt{\rho\epsilon}}{8\ell}$. Focus on the case where $\|\vect{x}_{t,\perp'}\|$, the component of $\vect{x}_{t}$ in subspace $\mathfrak{S}_{\perp}'$, achieves the largest value possible. Then in this case, we have the following recurrence formula:
\begin{align}
\|\vect{x}_{t+2,\perp'}\|\leq (1+\eta\sqrt{\rho\epsilon}/2)\big(\|\vect{x}_{t+1,\perp'}\|+(1-\theta)(\|\vect{x}_{t+1,\perp'}\|-\|\vect{x}_{t,\perp'}\|)\big)+\eta\|\Delta_{\perp'}\|.
\end{align}
Since $\frac{\|\vect{z}_{k,\perp'}\|}{\|\vect{z}_{k}\|}\geq \frac{\sqrt{\rho\epsilon}}{8\ell}$ for any $1\leq k\leq t+1$, we can derive that
\begin{align}
\frac{\|\Delta_{\perp}\|}{\|\vect{x}_{t+1,\perp}\|+(1-\theta)(\|\vect{x}_{t+1,\perp}\|-\|\vect{x}_{t,\perp}\|)}\leq\frac{\|\Delta\|}{\|\vect{z}_{t,\perp'}\|}\leq\frac{2\rho r'}{\sqrt{\rho\epsilon}},
\end{align}
which leads to
\begin{align}
\|\vect{x}_{t+2,\perp'}\|
&\leq (1+\eta\sqrt{\rho\epsilon}/2)\big(\|\vect{x}_{t+1,\perp'}\|+(1-\theta)(\|\vect{x}_{t+1,\perp'}\|-\|\vect{x}_{t,\perp'}\|)\big)+\eta\|\Delta_{\perp'}\|\\
&\leq (1+\eta\sqrt{\rho\epsilon}/2+2\rho r'/\sqrt{\rho\epsilon})\big((2-\theta)\|\vect{x}_{t+1,\perp'}\|-(1-\theta)\|\vect{x}_{t,\perp'}\|\big).
\end{align}
Using similar characteristic function techniques shown in the proof of \lem{AGD-component-lowerbound}, it can be further derived that
\begin{align}\label{eqn:perp'-recursion}
\|\vect{x}_{t,\perp'}\|\leq \|\vect{x}_{0,\perp'}\|\cdot\Big(\frac{1+\kappa_{\perp'}}{2}\Big)^t\cdot(2-\theta+\mu_{\perp'})^t,
\end{align}
for $\kappa_{\perp'}=\eta\sqrt{\rho\epsilon}/2+2\rho r'/\sqrt{\rho\epsilon}$ and $\mu_{\perp'}=\sqrt{(2-\theta)^2-\frac{4(1-\theta)}{1+\kappa_{\perp'}}}$, given $\frac{\|\vect{x}_{k,\perp'}\|}{\|\vect{x}_{k}\|}\geq \frac{\sqrt{\rho\epsilon}}{8\ell}$ and $\frac{\|\vect{z}_{k,\perp'}\|}{\|\vect{z}_{k}\|}\geq \frac{\sqrt{\rho\epsilon}}{8\ell}$ for any $1\leq k\leq t-1$. Due to \lem{AGD-component-lowerbound},
\begin{align}
\alpha_{t}'\geq\alpha_{\min}'=\frac{\delta_0}{8}\sqrt{\frac{\pi}{n}},\qquad \forall 1\leq t\leq\mathscr{T}'.
\end{align}
and it is demonstrated in the proof of \lem{AGD-component-lowerbound} that,
\begin{align}
\|\vect{x}_{t,\parallel}\|\geq\frac{\|\vect{x}_{0,\parallel}\|}{2}\cdot\Big(\frac{1+\kappa_{\parallel}}{2}\Big)^t\cdot(2-\theta+\mu_{\parallel})^t,\qquad\forall 1\leq t\leq\mathscr{T}',
\end{align}
for $\kappa_{\parallel}=\eta\sqrt{\rho\epsilon}-\eta\rho r'/\alpha_{\min}'$ and $\mu_{\parallel}=\sqrt{(2-\theta)^2-\frac{4(1-\theta)}{1+\kappa_{\parallel}}}$. Observe that
\begin{align}
\frac{\|\vect{x}_{\mathscr{T}',\perp'}\|}{\|\vect{x}_{\mathscr{T}',\parallel}\|}&\leq \frac{2\|\vect{x}_{0,\perp'}\|}{\|\vect{x}_{0,\parallel}\|}\cdot\Big(\frac{1+\kappa_{\perp'}}{1+\kappa_{\parallel}}\Big)^{\mathscr{T}'}\cdot\Big(\frac{2-\theta+\mu_{\perp'}}{2-\theta+\mu_{\parallel}}\Big)^{\mathscr{T}'}\\
&\leq\frac{2}{\delta_{0}}\sqrt{\frac{n}{\pi}}\Big(\frac{1+\kappa_{\perp'}}{1+\kappa_{\parallel}}\Big)^{\mathscr{T}'}\cdot\Big(\frac{2-\theta+\mu_{\perp'}}{2-\theta+\mu_{\parallel}}\Big)^{\mathscr{T}'},
\end{align}
where
\begin{align}
\frac{1+\kappa_{\perp'}}{1+\kappa_{\parallel}}
\leq \frac{1}{1+(\kappa_{\parallel}-\kappa_{\perp'})}
=1-\frac{1}{\eta\sqrt{\rho\epsilon}/2+\rho r'(\eta/\alpha_{\min'}+2/\sqrt{\rho\epsilon})}
\leq 1-\frac{\eta\sqrt{\rho\epsilon}}{4},
\end{align}
and
\begin{align}
\frac{2-\theta+\mu_{\perp'}}{2-\theta+\mu_{\parallel}}&=\frac{1+\sqrt{1-\frac{4(1-\theta)}{(1+\kappa_{\perp'})(2-\theta)^2}}}{1+\sqrt{1-\frac{4(1-\theta)}{(1+\kappa_{\parallel})(2-\theta)^2}}}\\
&\leq \frac{1}{1+\Big(\sqrt{1-\frac{4(1-\theta)}{(1+\kappa_{\perp'})(2-\theta)^2}}-\sqrt{1-\frac{4(1-\theta)}{(1+\kappa_{\parallel})(2-\theta)^2}}\Big)}\\
&\leq 1-\frac{\kappa_{\parallel}-\kappa_{\perp'}}{\theta}\\
&\leq 1-\frac{\eta\sqrt{\rho\epsilon}}{4\theta}=1-\frac{(\rho\epsilon)^{1/4}}{16\sqrt{\ell}}.
\end{align}
Hence,
\begin{align}
\frac{\|\vect{x}_{\mathscr{T}',\perp'}\|}{\|\vect{x}_{\mathscr{T}',\parallel}\|}&\leq\frac{2}{\delta_0}\sqrt{\frac{n}{\pi}}\Big(1-\frac{(\rho\epsilon)^{1/4}}{16\sqrt{\ell}}\Big)^{\mathscr{T}'}\leq\frac{\sqrt{\rho\epsilon}}{8\ell}.
\end{align}

Since $\|\vect{x}_{\mathscr{T}',\parallel}\|\leq\|\vect{x}_{\mathscr{T}'}\|$, we have $\frac{\|\vect{x}_{\mathscr{T}',\perp'}\|}{\|\vect{x}_{\mathscr{T}'}\|}\leq\frac{\sqrt{\rho\epsilon}}{8\ell}$, contradiction. Hence, there here exists some $t_0$ with $1\leq t_0\leq \mathscr{T}'$ such that $\frac{\|\vect{x}_{t_0,\perp'}\|}{\|\vect{x}_{t_0}\|}\leq\frac{\sqrt{\rho\epsilon}}{8\ell}$. Consider the normalized vector $\hat{\vect{e}}=\vect{x}_{t_0}/r$, we use $\hat{\vect{e}}_{\perp'}$ and $\hat{\vect{e}}_{\parallel'}$ to separately denote the component of $\hat{\vect{e}}$ in $\mathfrak{S}_{\perp}'$ and $\mathfrak{S}_{\parallel}'$. Then, $\|\hat{\vect{e}}_{\perp'}\|\leq\sqrt{\rho\epsilon}/(8\ell)$ whereas $\|\hat{\vect{e}}_{\parallel'}\|\geq 1-\rho\epsilon/(8\ell)^2$. Then,
\begin{align}
\hat{\vect{e}}^{T}\mathcal{H}(\vect{0})\hat{\vect{e}}=(\hat{\vect{e}}_{\perp'}+\hat{\vect{e}}_{\parallel'})^{T}\mathcal{H}(\vect{0})(\hat{\vect{e}}_{\perp'}+\hat{\vect{e}}_{\parallel'}),
\end{align}
since $\mathcal{H}(\vect{0})\hat{\vect{e}}_{\perp'}\in\mathfrak{S}_{\perp}'$ and $\mathcal{H}(\vect{0})\hat{\vect{e}}_{\parallel'}\in\mathfrak{S}_{\parallel}'$, it can be further simplified to
\begin{align}
\hat{\vect{e}}^{T}\mathcal{H}(\vect{0})\hat{\vect{e}}=\hat{\vect{e}}_{\perp'}^{T}\mathcal{H}(\vect{0})\hat{\vect{e}}_{\perp'}+\hat{\vect{e}}_{\parallel'}^{T}\mathcal{H}(\vect{0})\hat{\vect{e}}_{\parallel'},
\end{align}
Due to the $\ell$-smoothness of the function, all eigenvalue of the Hessian matrix has its absolute value upper bounded by $\ell$. Hence,
\begin{align}
\hat{\vect{e}}_{\perp'}^{T}\mathcal{H}(\vect{0})\hat{\vect{e}}_{\perp'}\leq\ell\|\hat{\vect{e}}_{\perp'}^{T}\|_{2}^2=\frac{\rho\epsilon}{64\ell^2}.
\end{align}
Further according to the definition of $\mathfrak{S}_{\parallel}$, we have
\begin{align}
\hat{\vect{e}}_{\parallel'}^{T}\mathcal{H}(\vect{0})\hat{\vect{e}}_{\parallel'}\leq-\frac{\sqrt{\rho\epsilon}}{2}\|\hat{\vect{e}}_{\parallel'}\|^2.
\end{align}
Combining these two inequalities together, we can obtain
\begin{align}
\hat{\vect{e}}^{T}\mathcal{H}(\vect{0})\hat{\vect{e}}&=\hat{\vect{e}}_{\perp}^{T}\mathcal{H}(\vect{0})\hat{\vect{e}}_{\perp'}+\hat{\vect{e}}_{\parallel'}^{T}\mathcal{H}(\vect{0})\hat{\vect{e}}_{\parallel'}
\leq-\frac{\sqrt{\rho\epsilon}}{2}\|\hat{\vect{e}}_{\parallel'}\|^2+\frac{\rho\epsilon}{64\ell^2}\leq-\frac{\sqrt{\rho\epsilon}}{4}.
\end{align}
\end{proof}


\section{Proof details of escaping from saddle points by negative curvature finding}

\subsection{Algorithms for escaping from saddle points using negative curvature finding}\label{append:explicit-algorithms}

In this subsection, we first present algorithm for escaping from saddle points using \algo{NC-finding} as \algo{PGD+NC}.
\begin{algorithm}[htbp]
\caption{Perturbed Gradient Descent with Negative Curvature Finding}
\label{algo:PGD+NC}
\textbf{Input:} $\vect{x}_{0}\in\mathbb{R}^n$\;
\For{$t=0,1,...,T$}{
\If{$\|\nabla f(\vect{x}_{t})\|\leq\epsilon$}{
$\hat{\vect{e}}\leftarrow$NegativeCurvatureFinding($\vect{x}_t,r,\mathscr{T}$)
\;
$\vect{x}_{t}\leftarrow \vect{x}_{t}-\frac{f'_{\hat{\vect{e}}}(\vect{x}_0)}{4|f'_{\hat{\vect{e}}}(\vect{x}_0)|}\sqrt{\frac{\epsilon}{\rho}}\cdot\hat{\vect{e}}$\;
}
$\vect{x}_{t+1}\leftarrow\vect{x}_{t}-\frac{1}{\ell}\nabla f(\vect{x}_{t})$\;
}
\end{algorithm}

Observe that \algo{PGD+NC} and \algo{ANCGD} are similar to perturbed gradient descent and perturbed accelerated gradient descent but the uniform perturbation step is replaced by our negative curvature finding algorithms. One may wonder that \algo{PGD+NC} seems to involve nested loops since negative curvature finding algorithm are contained in the primary loop, contradicting our previous claim that \algo{PGD+NC} only contains a single loop. But actually, \algo{PGD+NC} contains only two operations: gradient descents and one perturbation step, the same as operations outside the negative curvature finding algorithms. Hence, \algo{PGD+NC} is essentially single-loop algorithm, and we count their iteration number as the total number of gradient calls.


\subsection{Proof details of escaping saddle points using \algo{NC-finding}}\label{append:PGD+NC}
In this subsection, we prove:
\begin{theorem}\label{thm:PGD+NC-Complexity}
For any $\epsilon>0$ and $0<\delta\leq 1$, \algo{PGD+NC} with parameters chosen in \prop{NC-finding} satisfies that at least $1/4$ of its iterations will be $\epsilon$-approximate second-order stationary point, using
\begin{align*}
\tilde{O}\Big(\frac{(f(\vect{x}_{0})-f^{*})}{\epsilon^{2}}\cdot\log n\Big)
\end{align*}
iterations, with probability at least $1-\delta$, where $f^{*}$ is the global minimum of $f$.
\end{theorem}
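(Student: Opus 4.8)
The plan is to classify the outer iterations of \algo{PGD+NC} by what holds at $\vect x_t$, argue that every iteration that fails to be an $\epsilon$-approximate second-order stationary point pays for itself with a quantitative decrease of $f$, and then read off both the iteration count and the ``at least one quarter'' claim from the lower bound $f\ge f^*$. First fix a run of $T$ gradient queries and run every call of \algo{NC-finding} with confidence $\delta_0:=\delta/T$ in \eqn{parameter-choice}; since $\log(1/\delta_0)$ enters only logarithmically, $\mathscr T$ stays $\tilde O(\log n/\sqrt{\rho\epsilon})$, and a union bound over the at most $T$ calls gives that, with probability $\ge 1-\delta$, every invocation at a point with $\lambda_{\min}(\mathcal H(\vect x_t))\le -\sqrt{\rho\epsilon}$ returns a unit $\hat{\vect e}$ with $\hat{\vect e}^{T}\mathcal H(\vect x_t)\hat{\vect e}\le -\sqrt{\rho\epsilon}/4$, by \prop{NC-finding}. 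Condition on this event.

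Now split the iterations into three types. Type (i): $\|\nabla f(\vect x_t)\|>\epsilon$, where \lem{descent-lemma} gives $f(\vect x_{t+1})\le f(\vect x_t)-\epsilon^2/(2\ell)$ at the cost of one query. Type (ii): $\|\nabla f(\vect x_t)\|\le\epsilon$ and the \algo{NC-finding}-plus-perturbation block lowers $f$ by at least $\tfrac1{384}\sqrt{\epsilon^3/\rho}$ — which, on the conditioning event, is forced whenever $\lambda_{\min}(\mathcal H(\vect x_t))\le-\sqrt{\rho\epsilon}$ by feeding the $\hat{\vect e}$ of \prop{NC-finding} into \lem{utilize-NC} (with $\vect x_t$ in the role of $\vect x_0$ and the estimate \eqn{function-decrease}), and which costs $O(\mathscr T)$ queries. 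Type (iii): everything else. By the contrapositive of the type-(ii) analysis, on the conditioning event any type-(iii) iterate has $\|\nabla f(\vect x_t)\|\le\epsilon$ \emph{and} $\lambda_{\min}(\mathcal H(\vect x_t))>-\sqrt{\rho\epsilon}$, i.e.\ it satisfies \eqn{eps-approx-local-min}. Letting $N_1,N_2$ count iterations of types (i),(ii), telescoping the function value with $f(\vect x_T)\ge f^*$ bounds $N_1\cdot\tfrac{\epsilon^2}{2\ell}+N_2\cdot\tfrac1{384}\sqrt{\epsilon^3/\rho}$ by $f(\vect x_0)-f^*$ plus the function-value increase produced by the (radius-$\tfrac14\sqrt{\epsilon/\rho}$) perturbations in type (iii); this yields $N_1=\tilde O\big((f(\vect x_0)-f^*)/\epsilon^2\big)$ and $N_2=\tilde O\big((f(\vect x_0)-f^*)\sqrt{\rho/\epsilon^3}\big)$, so that the total query count is $N_1+O(\mathscr T)N_2=\tilde O\big((f(\vect x_0)-f^*)\log n/\epsilon^2\big)$ since $\sqrt{\rho/\epsilon^3}\cdot\mathscr T=\tilde O(\ell\log n/\epsilon^2)$; choosing the run length to realize $\Theta$ of this many queries then forces the type-(iii) iterates to make up at least a quarter of all iterates.

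The step I expect to be the main obstacle is exactly this last bookkeeping for type (iii): a perturbation issued at a point that is \emph{already} an approximate second-order stationary point need not decrease $f$, so the telescoping above is not monotone and one must control the cumulative increase. The cleanest resolution — and the one I would adopt — is to equip \algo{PGD+NC} with the termination rule of the Remark following \thm{PAGD+ANC-Complexity}: record the value before the \algo{NC-finding}-plus-perturbation block, and if it fails to drop by $\tfrac1{384}\sqrt{\epsilon^3/\rho}$, halt and output $\vect x_t$ (which is then $\epsilon$-second-order stationary on the conditioning event); this makes type (iii) occur at most once. Absent such a rule one amortizes the bounded type-(iii) increases against the gradient-descent steps that undo them and checks that the constant fraction survives. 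Everything else is a routine assembly of \lem{descent-lemma}, \prop{NC-finding}, and \lem{utilize-NC}.
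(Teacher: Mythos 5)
Your proof plan follows the same skeleton as the paper's proof of \thm{PGD+NC-Complexity}: set the run length $T$ by the two budget constraints, condition on the success of every \algo{NC-finding} call via \prop{NC-finding} and a union bound, then telescope the function value using \lem{descent-lemma} for large-gradient steps and \lem{utilize-NC} for genuine saddle perturbations. Where you diverge from the paper is precisely the spot you flag as the main obstacle — and you are right to flag it. The paper's proof asserts that ``\algo{NC-finding} can be called for at most $384(f(\vect{x}_0)-f^*)\sqrt{\rho/\epsilon^3}$ times, for otherwise the function value decrease will be greater than $f(\vect{x}_0)-f^*$,'' and later that the number of large-gradient steps is bounded by a separate telescoping argument; both inferences implicitly treat the trajectory as monotone, i.e.\ they ignore that \algo{PGD+NC} still issues a radius-$\tfrac14\sqrt{\epsilon/\rho}$ perturbation whenever $\|\nabla f(\vect x_t)\|\le\epsilon$ even when $\lambda_{\min}(\mathcal H(\vect x_t))>-\sqrt{\rho\epsilon}$, in which case \lem{utilize-NC} gives no decrease guarantee and the step can increase $f$ by up to $\Theta(\ell\epsilon/\rho)$. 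The paper's proof text does not close this loop (its \lem{descent-lemma} is explicitly ``for any step $t$ in which Negative Curvature Finding is not called''); the accompanying Remark hints at the termination rule that repairs it, but only informally and only for the accelerated variant.

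Your proposed fix — halt and output $\vect x_{t_0}$ the first time the \algo{NC-finding}-plus-perturbation block fails to drop $f$ by $\tfrac1{384}\sqrt{\epsilon^3/\rho}$, and note that on the conditioning event this $\vect x_{t_0}$ must satisfy \eqn{eps-approx-local-min} — is the cleanest way to make the telescoping sound, and is consistent with what the paper's Remark sketches. One caveat worth noting: with that termination rule the natural conclusion is ``the algorithm identifies an $\epsilon$-approximate second-order stationary point with probability $1-\delta$,'' which is a cleaner and slightly different guarantee than the verbatim ``at least $1/4$ of its iterations'' statement; the latter phrasing really wants the non-terminating algorithm, whose bookkeeping is exactly what needs the type-(iii) amortization you describe as the alternative. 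Either way, your identification of the gap is correct, your conditioning step ($\delta_0=\delta/T$, union bound) is what the paper does (up to the particular choice of $\delta_0$), and your assembly of \lem{descent-lemma}, \prop{NC-finding} and \lem{utilize-NC} matches the paper's, so the substance of the proposal is sound and in fact more careful than the printed proof.
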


\begin{proof}
Let the parameters be chosen according to \eqn{parameter-choice}, and set the total step number $T$ to be:
\begin{align}
T=\max\left\{\frac{8\ell(f(\vect{x_{0}})-f^{*})}{\epsilon^{2}},768(f(\vect{x_{0}})-f^{*})\cdot\sqrt{\frac{\rho}{\epsilon^3}}\right\},
\end{align}
similar to the perturbed gradient descent algorithm {\cite[Algorithm 4]{jin2019stochastic}}. We first assume that for each $\vect{x}_t$ we apply negative curvature finding (\algo{NC-finding}) with $\delta_0$ contained in the parameters be chosen as
\begin{align}
\delta_0=\frac{1}{384(f(\vect{x}_0-f^{*})}\sqrt{\frac{\epsilon^3}{\rho}}\delta,
\end{align}
we can successfully obtain a unit vector $\hat{\vect{e}}$ with $\hat{\vect{e}}^{T}\mathcal{H}\hat{\vect{e}}\leq-\sqrt{\rho\epsilon}/4$, as long as $\lambda_{\min}(\mathcal{H}(\vect{x}_t))\leq-\sqrt{\rho\epsilon}$. The error probability of this assumption is provided later.

Under this assumption, \algo{NC-finding} can be called for at most $384(f(\vect{x_{0}})-f^{*})\sqrt{\frac{\rho}{\epsilon^3}}\leq \frac{T}{2}$ times, for otherwise the function value decrease will be greater than $f(\vect{x_{0}})-f^{*}$, which is not possible. Then, the error probability that some calls to \algo{NC-finding} fails is upper bounded by
\begin{align}
384(f(\vect{x_{0}})-f^{*})\sqrt{\frac{\rho}{\epsilon^3}}\cdot\delta_0=\delta.
\end{align}

For the rest of iterations in which \algo{NC-finding} is not called, they are either large gradient steps, $\|\nabla f(\vect{x}_{t})\|\geq \epsilon$, or $\epsilon$-approximate second-order stationary points. Within them, we know that the number of large gradient steps cannot be more than $T/4$ because otherwise, by \lem{descent-lemma} in \append{existing-lemma}:
\begin{align*}
f(\vect{x}_{T})\leq f(\vect{x}_{0})-T\eta\epsilon^{2}/8<f^{*},
\end{align*}
a contradiction. Therefore, we conclude that at least $T/4$ of the iterations must be $\epsilon$-approximate second-order stationary points, with probability at least $1-\delta$.

The number of iterations can be viewed as the sum of two parts, the number of iterations needed for gradient descent, denoted by $T_{1}$, and the number of iterations needed for negative curvature finding, denoted by $T_{2}$. with probability at least $1-\delta$,
\begin{align}
T_{1}=T=\tilde{O}\Big(\frac{(f(\vect{x}_{0})-f^{*})}{\epsilon^{2}}\Big).
\end{align}
As for $T_2$, with probability at least $1-\delta$, \algo{NC-finding} is called for at most $384(f(\vect{x_{0}})-f^{*})\sqrt{\frac{\rho}{\epsilon^3}}$ times, and by \prop{NC-finding} it takes $\tilde{O}\Big(\frac{\log n}{\sqrt{\rho\epsilon}}\Big)$ iterations each time. Hence,
\begin{align}
T_2=384(f(\vect{x_{0}})-f^{*})\sqrt{\frac{\rho}{\epsilon^3}}\cdot \tilde{O}\Big(\frac{\log n}{\sqrt{\rho\epsilon}}\Big)=\tilde{O}\Big(\frac{(f(\vect{x}_{0})-f^{*})}{\epsilon^{2}}\cdot\log n\Big).
\end{align}
As a result, the total iteration number $T_1+T_2$ is
\begin{align}
\tilde{O}\Big(\frac{(f(\vect{x}_{0})-f^{*})}{\epsilon^{2}}\cdot\log n\Big).
\end{align}
\end{proof}


\subsection{Proof details of escaping saddle points using \algo{ANCGD}}
\label{append:PAGD+ANC}
We first present here the Negative Curvature Exploitation algorithm proposed in proposed in {\cite[Algorithm 3]{jin2018accelerated}} appearing in \lin{NCE} of \algo{ANCGD}:
\begin{algorithm}[htbp]
\caption{Negative Curvature Exploitation($\vect{x}_t,\vect{v}_t,s)$}
\label{algo:NCE}
\If{$\|\vect{v}_t\|\geq s$}{
$\vect{x}_{t+1}\leftarrow\vect{x}_t$\;
}
\Else{
$\xi=s\cdot \vect{v}_t/\|\vect{v}\|_t$\;
$\vect{x}_t\leftarrow\text{argmin}_{\vect{x}\in \left\{\vect{x}_t+\xi,\vect{x}_t-\xi\right\} }f(\vect{x})$\;
}
\textbf{Output} $(\vect{z}_{t+1},\vect{0})$.
\end{algorithm}

Now, we give the full version of \thm{PAGD+ANC-Complexity} as follows:
\begin{theorem}\label{thm:PAGD+ANC-full}
Suppose that the function $f$ is $\ell$-smooth and $\rho$-Hessian Lipschitz. For any $\epsilon>0$ and a constant $0<\delta\leq 1$, we choose the parameters appearing in \algo{ANCGD} as follows:
\begin{align}\label{eqn:ANCGD-parameter-choice}
\delta_{0}&=\frac{\delta}{384\Delta_f}\sqrt{\frac{\epsilon^3}{\rho}},
&\mathscr{T}'&=\frac{32\sqrt{\ell}}{(\rho\epsilon)^{1/4}}\log\Big(\frac{\ell}{\delta_0}\sqrt{\frac{n}{\rho\epsilon}}\Big),
&\zeta &=\frac{\ell}{\sqrt{\rho\epsilon}},\\
r'&=\frac{\delta_0\epsilon}{32}\sqrt{\frac{\pi}{\rho n}}
,
&\eta &=\frac{1}{4\ell},
&\theta &=\frac{1}{4\sqrt{\zeta}},
\\
\mathscr{E} &=\sqrt{\frac{\epsilon^{3}}{\rho}}\cdot c_{A}^{-7},
&\gamma &=\frac{\theta^2}{\eta},
&s &=\frac{\gamma}{4\rho},
\end{align}
where $\Delta_f:=f(\vect{x}_0)-f^{*}$ and $f^*$ is the global minimum of $f$, and the constant $c_A$ is chosen large enough to satisfy both the condition in \lem{AGD-large-gradient} and $c_A\geq(384)^{1/7}$. Then, \algo{ANCGD} satisfies that at least one of the iterations $\vect{z}_{t}$ will be an $\epsilon$-approximate second-order stationary point in
\begin{align}
\tilde{O}\Big(\frac{(f(\vect{x}_{0})-f^{*})}{\epsilon^{1.75}}\cdot\log n\Big)
\end{align}
iterations, with probability at least $1-\delta$.
\end{theorem}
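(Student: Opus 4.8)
The plan is a potential–function argument in the spirit of {\cite[Theorem 3]{jin2018accelerated}}, carried out by contradiction, with the Hamiltonian $E_\tau:=f(\vect{x}_\tau)+\frac{1}{2\eta}\|\vect{v}_\tau\|^2$ of \eqn{Lemma-A7} as the potential. Fix $T=\tilde O\!\big(\Delta_f\log n/\epsilon^{1.75}\big)$, made precise at the end, and assume towards a contradiction that none of the iterates $\vect{x}_0,\dots,\vect{x}_T$ of \algo{ANCGD} is an $\epsilon$-approximate second-order stationary point. Then each step $t\le T$ belongs to exactly one of four classes: (a) a \emph{large-gradient} step, $\|\nabla f(\vect{x}_t)\|\ge\epsilon$, outside every negative-curvature-finding window; (b) a step of a negative-curvature-finding window — one of the $\mathscr{T}'$ steps following a uniform perturbation in \lin{AGD-perturb}, together with the directed step taken immediately after it; (c) a \emph{perturbation-trigger} step, $\|\nabla f(\vect{x}_t)\|\le\epsilon$ with the cooldown elapsed; (d) a leftover small-gradient step still inside the cooldown. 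I will bound the number of steps in each class and add.

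At a type-(c) step, either $\lambda_{\min}(\mathcal{H}(\vect{x}_t))\ge-\sqrt{\rho\epsilon}$, in which case $\vect{x}_t$ is an $\epsilon$-approximate second-order stationary point, contradicting the assumption; or $\lambda_{\min}(\mathcal{H}(\vect{x}_t))<-\sqrt{\rho\epsilon}$, and then the uniform perturbation of \lin{AGD-perturb} is injected at $\tilde{\vect{x}}=\vect{x}_t$ at the start of the ensuing window, putting us exactly in the hypothesis of \prop{ANC-finding}. Hence with probability at least $1-\delta_0$ the unit vector $\hat{\vect{e}}$ read off in \lin{NC-direction} satisfies $\hat{\vect{e}}^{T}\mathcal{H}(\vect{x}_t)\hat{\vect{e}}\le-\sqrt{\rho\epsilon}/4$, and on that event \lem{utilize-NC} applied at $\vect{x}_0=\tilde{\vect{x}}$ — which is precisely the directed step $\vect{x}_t\leftarrow\tilde{\vect{x}}-\tfrac{f'_{\hat{\vect{e}}}(\tilde{\vect{x}})}{4|f'_{\hat{\vect{e}}}(\tilde{\vect{x}})|}\sqrt{\epsilon/\rho}\,\hat{\vect{e}}$ — decreases $f$ by at least $\tfrac1{384}\sqrt{\epsilon^3/\rho}$ relative to $f(\tilde{\vect{x}})$; since the momentum is simultaneously reset to $\vect{0}$, this is a net drop of at least $\tfrac1{384}\sqrt{\epsilon^3/\rho}\ge\mathscr{E}$ in $E$ across the whole window (the last inequality uses $c_A\ge(384)^{1/7}$, which is exactly why the parameter choice imposes it — it puts saddle progress on the same scale $\mathscr{E}$ as large-gradient progress). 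Because $E\ge f^*$ always and $E_0=f(\vect{x}_0)$, the number $N$ of type-(c) windows is at most $\Delta_f/\mathscr{E}=O\!\big(\Delta_f c_A^7\sqrt{\rho/\epsilon^3}\big)$; a union bound over those at most $N$ invocations of \prop{ANC-finding}, with $\delta_0=\tfrac{\delta}{384\Delta_f}\sqrt{\epsilon^3/\rho}$ as in \eqn{ANCGD-parameter-choice}, makes the total failure probability at most $\delta$, and the rest of the argument runs on the complementary event.

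Outside the negative-curvature-finding windows, \algo{ANCGD} is ordinary accelerated gradient descent with the Negative Curvature Exploitation step of \algo{NCE}, so \lem{Hamiltonian-decrease} gives $E_{\tau+1}\le E_\tau$ on all type-(a) and type-(d) steps, and \lem{AGD-large-gradient} says any $\tilde{\mathscr{T}}=\Theta\!\big(\sqrt{\ell}/(\rho\epsilon)^{1/4}\big)$ consecutive type-(a) steps force a further drop of $\mathscr{E}$ in $E$. Since type-(a) steps inside a single maximal non-window segment are consecutive (the cooldown steps form a contiguous prefix of each segment), the number of complete $\tilde{\mathscr{T}}$-blocks of type-(a) steps plus the number $N$ of type-(c) windows is at most $\Delta_f/\mathscr{E}$; hence the type-(a) count is at most $\tilde{\mathscr{T}}\cdot O(\Delta_f/\mathscr{E})=\tilde O(\Delta_f/\epsilon^{1.75})$ (no $\log n$), the type-(b) count is $N(\mathscr{T}'+1)=O\!\big(\Delta_f\sqrt{\rho/\epsilon^3}\big)\cdot\tilde O\!\big(\sqrt{\ell}/(\rho\epsilon)^{1/4}\big)=\tilde O(\Delta_f\log n/\epsilon^{1.75})$ — the single place the $\log n$ of $\mathscr{T}'$ enters — and the type-(d) count is at most $N$ times the cooldown length $O(\mathscr{T}')$, again $\tilde O(\Delta_f\log n/\epsilon^{1.75})$. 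Adding the four bounds, the $T+1$ steps $t=0,\dots,T$ number at most $\tilde O(\Delta_f\log n/\epsilon^{1.75})$; choosing $T$ to be a slightly larger $\tilde O(\Delta_f\log n/\epsilon^{1.75})$ makes this strictly less than $T+1$, a contradiction. Therefore, with probability at least $1-\delta$, some iterate among the first $T$ is an $\epsilon$-approximate second-order stationary point.

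The step I expect to be the main obstacle is making the Hamiltonian bookkeeping airtight at the seams between the accelerated-gradient phase and the negative-curvature-finding windows. \lem{AGD-large-gradient} and \lem{Hamiltonian-decrease} are stated for the pure AGD-with-NCE recursion, whereas inside a window \algo{ANCGD} instead renormalizes $\vect{z}_{t+1},\vect{x}_{t+1}$ onto the sphere of radius $r'$ about $\tilde{\vect{x}}$, so $E$ is genuinely uncontrolled there; I must verify that (i) the net $E$-change over an entire window equals the $f$-change of the directed step, because the momentum is zeroed at its two ends so the quadratic term drops out at the boundary (this is where \lem{utilize-NC} plugs in), and (ii) \lem{AGD-large-gradient} may be reapplied afresh on each maximal run of non-window steps, so the block-counting loses at most a constant number of blocks per window. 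Establishing (i)–(ii) is exactly the part not already contained in {\cite[Theorem 3]{jin2018accelerated}}; everything else is a re-parametrization of their large-gradient analysis with the constants of \eqn{ANCGD-parameter-choice}.
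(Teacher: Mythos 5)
Your proof takes the same route as the paper's: the Hamiltonian $E_\tau$ as potential, the contradiction at a chosen $T$, the same two auxiliary results (\lem{Hamiltonian-decrease} and \lem{AGD-large-gradient}), and the union bound over calls to \prop{ANC-finding}. There is, however, one quantitative slip that breaks the union bound as written. You bound the number of windows by $N\le\Delta_f/\mathscr{E}=c_A^7\Delta_f\sqrt{\rho/\epsilon^3}$ and then assert $N\delta_0\le\delta$ with $\delta_0=\tfrac{\delta}{384\Delta_f}\sqrt{\epsilon^3/\rho}$. But this gives $N\delta_0\le c_A^7\delta/384$, and since $c_A\ge(384)^{1/7}$ (and is allowed to be strictly larger), $c_A^7/384\ge1$, so this does not yield $N\delta_0\le\delta$. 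The fix is already in your own sentence: \lem{utilize-NC} gives a per-window function-value drop of at least $\tfrac{1}{384}\sqrt{\epsilon^3/\rho}$, and using that directly --- which is what the paper does --- yields $N\le384\Delta_f\sqrt{\rho/\epsilon^3}$, hence $N\delta_0=\delta$ exactly. Reserve the weaker threshold $\mathscr{E}$ for the large-gradient block count only, and count windows with the raw drop $\tfrac{1}{384}\sqrt{\epsilon^3/\rho}$.

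One structural remark: the paper counts perturbations purely by the decrease in $f$, and invokes the Hamiltonian only on the non-window segments, where \lem{Hamiltonian-decrease} and \lem{AGD-large-gradient} apply directly. You instead try to run $E$ through the windows, which is precisely what forces you to flag items (i) and (ii) at the end; the algorithm's renormalization and momentum-handling inside a window do not satisfy the hypotheses of those lemmas, so tracking $E$ there is genuinely delicate. You can bypass it entirely by bounding the perturbation count with $f$ alone, not $E$ --- you already carry out that arithmetic, so this is a change of accounting rather than a new idea.
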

\begin{proof}
Set the total step number $T$ to be:
\begin{align}
T=\max\left\{\frac{4\Delta_f(\tilde{\mathscr{T}}+\mathscr{T}')}{\mathscr{E}},768\Delta_f\mathscr{T}'\sqrt{\frac{\rho}{\epsilon^3}}\right\}=\tilde{O}\Big(\frac{(f(\vect{x}_{0})-f^{*})}{\epsilon^{1.75}}\cdot\log n\Big),
\end{align}
where $\tilde{\mathscr{T}}=\sqrt{\zeta}\cdot c_A$ as defined in \lem{AGD-large-gradient}, similar to the perturbed accelerated gradient descent algorithm {\cite[Algorithm 2]{jin2018accelerated}}. We first assert that for each iteration $\vect{x}_t$ that a uniform perturbation is added, after $\mathscr{T}'$ iterations we can successfully obtain a unit vector $\hat{\vect{e}}$ with $\hat{\vect{e}}^{T}\mathcal{H}\hat{\vect{e}}\leq-\sqrt{\rho\epsilon}/4$, as long as $\lambda_{\min}(\mathcal{H}(\vect{x}_t))\leq-\sqrt{\rho\epsilon}$. The error probability of this assumption is provided later.

Under this assumption, the uniform perturbation can be called for at most $384(f(\vect{x_{0}})-f^{*})\sqrt{\frac{\rho}{\epsilon^3}}$ times, for otherwise the function value decrease will be greater than $f(\vect{x_{0}})-f^{*}$, which is not possible. Then, the probability that at least one negative curvature finding subroutine after uniform perturbation fails is upper bounded by
\begin{align}
384(f(\vect{x_{0}})-f^{*})\sqrt{\frac{\rho}{\epsilon^3}}\cdot\delta_0=\delta.
\end{align}

For the rest of steps which is not within $\mathscr{T}'$ steps after uniform perturbation, they are either large gradient steps, $\|\nabla f(\vect{x}_{t})\|\geq \epsilon$, or $\epsilon$-approximate second-order stationary points. Next, we demonstrate that at least one of these steps is an $\epsilon$-approximate stationary point.

Assume the contrary. We use $N_{\tilde{\mathscr{T}}}$ to denote the number of disjoint time periods with length larger than $\tilde{\mathscr{T}}$ containing only large gradient steps and do not contain any step within $\mathscr{T}'$ steps after uniform perturbation. Then, it satisfies
\begin{align}
N_{\tilde{\mathscr{T}}}\geq\frac{T}{2(\tilde{\mathscr{T}}+\mathscr{T}')}-384\Delta_f\sqrt{\frac{\rho}{\epsilon^3}}\geq(2c_A^7-384)\Delta_f\sqrt{\frac{\rho}{\epsilon^3}}\geq \frac{\Delta_f}{\mathscr{E}}.
\end{align}
From \lem{AGD-large-gradient}, during these time intervals the Hamiltonian will decrease in total at least $N_{\tilde{\mathscr{T}}}\cdot\mathscr{E}=\Delta_f$, which is impossible due to \lem{Hamiltonian-decrease}, the Hamiltonian decreases monotonically for every step except for the $\mathscr{T}'$ steps after uniform perturbation, and the overall decrease cannot be greater than $\Delta_f$, a contradiction. Therefore, we conclude that at least one of the iterations must be an $\epsilon$-approximate second-order stationary point, with probability at least $1-\delta$.
\end{proof}


\section{Proofs of the stochastic setting}\label{append:PSGD+SNC}
\subsection{Proof details of negative curvature finding using stochastic gradients}\label{append:SNC-finding}
In this subsection, we demonstrate that \algo{SNC-finding} can find a negative curvature efficiently. Specifically, we prove the following proposition:
\begin{proposition}\label{prop:SNC-finding}
Suppose the function $f\colon\mathbb{R}^n\to\mathbb{R}$ is $\ell$-smooth and $\rho$-Hessian Lipschitz. For any $0<\delta<1$, we specify our choice of parameters and constants we use as follows:
\begin{align}\label{eqn:stochastic-parameter-choice}
\mathscr{T}_s&=\frac{8\ell}{\sqrt{\rho\epsilon}}\cdot\log\Big(\frac{\ell n}{\delta\sqrt{\rho\epsilon}}\Big),
&\iota &=10\log\Big(\frac{n\mathscr{T}_s^2}{\delta}\log\Big(\frac{\sqrt{n}}{\eta r_s}\Big)\Big),\\
r_s&=\frac{\delta}{480\rho n\mathscr{T}_s}\sqrt{\frac{\rho\epsilon}{\iota}},& m&=\frac{160(\ell+\tilde{\ell})}{\delta\sqrt{\rho\epsilon}}\sqrt{\mathscr{T}_s\iota},
\end{align}
Then for any point $\tilde{\vect{x}}\in\mathbb{R}^{n}$ satisfying $\lambda_{\min}(\mathcal{H}(\tilde{\vect{x}}))\leq-\sqrt{\rho\epsilon}$, with probability at least $1-3\delta$, \algo{SNC-finding} outputs a unit vector $\hat{\vect{e}}$ satisfying
\begin{align}
\hat{\vect{e}}^{T}\mathcal{H}(\tilde{\vect{x}})\hat{\vect{e}}\leq-\frac{\sqrt{\rho\epsilon}}{4},
\end{align}
where $\mathcal{H}$ stands for the Hessian matrix of function $f$, using $O(m\cdot\mathscr{T}_s)=\tilde{O}\Big(\frac{\log^2 n}{\delta\epsilon^{1/2}}\Big)$ iteartions.
\end{proposition}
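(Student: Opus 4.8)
The plan is to treat \algo{SNC-finding} as an inexact power iteration for the matrix $I-\mathcal{H}(\tilde{\vect{x}})/\ell$ and to replay the argument of \prop{NC-finding}, now absorbing the three extra error sources special to the stochastic setting: the minibatch gradient noise, the per-step injected Gaussian $\xi_t$, and the renormalization on \lin{y_t-renormalization-stochastic}. As in the proof of \prop{NC-finding} I would first shift $\tilde{\vect{x}}$ to the origin and pass to $h_f$, write $\mathcal{H}(\tilde{\vect{x}})=\sum_i\lambda_i\vect{u}_i\vect{u}_i^{T}$ with $\lambda_1\le\cdots\le\lambda_n$ and $\lambda_1\le-\sqrt{\rho\epsilon}$, dispose of the trivial case $\lambda_n\le-\sqrt{\rho\epsilon}/2$, and otherwise fix the thresholds $\lambda_p\le-\sqrt{\rho\epsilon}\le\lambda_{p+1}$ and $\lambda_{p'}\le-\sqrt{\rho\epsilon}/2<\lambda_{p'+1}$ together with the subspaces $\mathfrak{S}_{\parallel},\mathfrak{S}_{\perp},\mathfrak{S}_{\parallel}',\mathfrak{S}_{\perp}'$. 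The whole goal is then to show that, with probability at least $1-3\delta$, there is some $t_0\le\mathscr{T}_s$ with $\|\vect{y}_{t_0,\perp'}\|/\|\vect{y}_{t_0}\|\le\sqrt{\rho\epsilon}/(8\ell)$; granting this, setting $\hat{\vect{e}}=\vect{y}_{t_0}/r_s$ and copying the final displays of the proof of \prop{NC-finding} verbatim (split $\hat{\vect{e}}=\hat{\vect{e}}_{\perp'}+\hat{\vect{e}}_{\parallel'}$, use $\ell$-smoothness for $\hat{\vect{e}}_{\perp'}^{T}\mathcal{H}\hat{\vect{e}}_{\perp'}\le\rho\epsilon/(64\ell^2)$ and the definition of $\mathfrak{S}_{\parallel}'$ for $\hat{\vect{e}}_{\parallel'}^{T}\mathcal{H}\hat{\vect{e}}_{\parallel'}\le-\sqrt{\rho\epsilon}\|\hat{\vect{e}}_{\parallel'}\|^2/2$) gives $\hat{\vect{e}}^{T}\mathcal{H}(\tilde{\vect{x}})\hat{\vect{e}}\le-\sqrt{\rho\epsilon}/4$.

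Next I would unfold the update of \algo{SNC-finding}. Since, exactly as in the Remark after \prop{NC-finding}, the renormalization on \lin{y_t-renormalization-stochastic} does not change the direction of $\vect{y}_t$ (the virtual unrenormalized iterate being $L_t\vect{y}_t/r_s$, which is the sequence for which \lem{stochastic-qp-norm} is phrased), none of the ratios above is affected by it; ignoring it, the update reads $\vect{y}_t=(I-\mathcal{H}(\vect{0})/\ell)\vect{y}_{t-1}-\tfrac1\ell\Delta_t-\tfrac1\ell\vect{n}_t+\tfrac{1}{\ell L_{t-1}}\xi_t$, where $\Delta_t$ is the deviation from the quadratic model, with $\|\Delta_t\|\le\rho\|\vect{y}_{t-1}\|^2=\rho r_s^2$ by \eqn{gradient-approximation-0} and renormalization; $\vect{n}_t=\tfrac1m\sum_{j}\big(\zeta(\vect{y}_{t-1};\theta^{(j)})-\zeta(\vect{0};\theta^{(j)})\big)$ is the averaged stochastic-gradient error; and $\xi_t\sim\mathcal{N}(\vect{0},(r_s^2/d)I)$. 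Under \assum{stochastic-variance} and \assum{stochastic-lipschitz} each summand of $m\vect{n}_t$ is zero-mean of norm at most $(\ell+\tilde{\ell})\|\vect{y}_{t-1}\|=(\ell+\tilde{\ell})r_s$, hence norm-subGaussian in the sense of \defn{nSG}, so $\{\vect{n}_t\}$ is a martingale-difference sequence of $\mathrm{nSG}\big((\ell+\tilde{\ell})r_s/\sqrt{m}\big)$ vectors. I would then split $\vect{y}_t$ into the $\xi_\tau$-driven part $\vect{q}_t$ (propagated through the power iteration, as in the PSGD analysis of \cite{jin2019stochastic}) and a residual carrying the $\Delta_\tau,\vect{n}_\tau$ contributions. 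For the $\xi$-part, \lem{stochastic-qp-norm} — with $-\gamma=\lambda_{\min}(\tilde{\mathcal{H}})\le-\sqrt{\rho\epsilon}$, so $\eta\gamma\ge\sqrt{\rho\epsilon}/\ell$ and $\beta(t)=(1+\eta\gamma)^t/\sqrt{2\eta\gamma}$ — gives, with probability $\ge1-2\delta$ (the second inequality union-bounded over $t\le\mathscr{T}_s$ for the lower bound, the first for the upper bound), $\tfrac{\beta(t)\eta r_s\delta}{10\sqrt{n}\,\mathscr{T}_s}\le\|\vect{q}_p(t)\|\le\beta(t)\eta r_s\sqrt{\iota}$, where $\vect{q}_p$ is the component of $\vect{q}_t$ in $\operatorname{span}\{\vect{u}_i:\lambda_i\le-\sqrt{\rho\epsilon}\}=\mathfrak{S}_{\parallel}$; its growth rate is at least $1+\eta\gamma\ge1+\sqrt{\rho\epsilon}/\ell$ per step (using $\alpha(t)\le\beta(t)$ from \lem{stochastic-ab}). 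For the residual, its projection onto any fixed subspace is $\sum_\tau w_{t,\tau}\tfrac1\ell\vect{n}_\tau$ with power-iteration weights $w_{t,\tau}$, a norm-subGaussian martingale; \lem{stochastic-contraction-37} bounds it, with probability $\ge1-\delta$ and at the price of the log-factor $\iota=10\log(\cdots)$ from the union bound over $t$ and over the $O(\log(\sqrt{n}/(\eta r_s)))$ dyadic scales in that lemma, by $O\big(\beta(\mathscr{T}_s)(\ell+\tilde{\ell})r_s\sqrt{\iota}/(\ell\sqrt{m})\big)$, which the choice $m=\tfrac{160(\ell+\tilde{\ell})}{\delta\sqrt{\rho\epsilon}}\sqrt{\mathscr{T}_s\iota}$ makes a small multiple — of order $\delta/(10\sqrt{n}\,\mathscr{T}_s)$ — of the lower bound on $\|\vect{q}_p(t)\|$; finally $\Delta_\tau$ contributes only a per-step multiplicative factor $1+8\rho r_s/\sqrt{\rho\epsilon}$, negligible since $r_s=\tfrac{\delta}{480\rho n\mathscr{T}_s}\sqrt{\rho\epsilon/\iota}$, exactly as in the step leading to \eqn{GD-perp'-recursion}.

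With these estimates, assume for contradiction that $\|\vect{y}_{t,\perp'}\|/\|\vect{y}_t\|>\sqrt{\rho\epsilon}/(8\ell)$ for every $t\le\mathscr{T}_s$. Then $\|\vect{y}_{t,\perp'}\|$ — a power iteration on eigenvalues $>-\sqrt{\rho\epsilon}/2$ fed by the controlled residual and by the $\xi_\tau$-contributions lying in $\mathfrak{S}_{\perp'}$, the errors being absorbed multiplicatively thanks to the standing assumption $\|\vect{y}_{t,\perp'}\|\ge r_s\sqrt{\rho\epsilon}/(8\ell)$ — grows by at most $1+5\sqrt{\rho\epsilon}/(8\ell)$ per step, while $\|\vect{y}_{t,\parallel}\|\ge\|\vect{q}_p(t)\|-(\text{residual})\ge\tfrac12\|\vect{q}_p(t)\|$ grows by at least $1+7\sqrt{\rho\epsilon}/(8\ell)$ per step; hence $\|\vect{y}_{\mathscr{T}_s,\perp'}\|/\|\vect{y}_{\mathscr{T}_s,\parallel}\|\le\tfrac{C\sqrt{n}}{\delta}\big(\tfrac{1+5\sqrt{\rho\epsilon}/(8\ell)}{1+7\sqrt{\rho\epsilon}/(8\ell)}\big)^{\mathscr{T}_s}$, and $\mathscr{T}_s=\tfrac{8\ell}{\sqrt{\rho\epsilon}}\log(\tfrac{\ell n}{\delta\sqrt{\rho\epsilon}})$ drives this below $\sqrt{\rho\epsilon}/(8\ell)$, contradicting $\|\vect{y}_{\mathscr{T}_s,\parallel}\|\le\|\vect{y}_{\mathscr{T}_s}\|$. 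So the desired $t_0$ exists with probability $\ge1-3\delta$, and the proposition follows; the iteration count is $\mathscr{T}_s$ outer steps, each a minibatch of $2m$ gradient queries, i.e.\ $O(m\mathscr{T}_s)$, which is the stated bound after substituting \eqn{stochastic-parameter-choice}.

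\textbf{Main obstacle.} The crux is the joint control of the three noise sources while the power iteration amplifies exponentially: one must (i) organize the accumulated stochastic-gradient error as a norm-subGaussian martingale with the correct power-iteration weights so that \lem{stochastic-contraction-37} applies, paying only one $\delta$ and the $\iota$-factor over a union bound across all $t\le\mathscr{T}_s$ and the dyadic scales of that lemma; and (ii) verify that the prescribed $m$, $r_s$, $\iota$ make both the minibatch noise and the quadratic-deviation term uniformly — over all $\mathscr{T}_s$ iterations — a negligible fraction of the Gaussian-seeded signal of \lem{stochastic-qp-norm}, which is precisely what dictates $m\sim\sqrt{\mathscr{T}_s\iota}/\delta$ and $r_s\sim\delta\sqrt{\rho\epsilon}/(\rho n\mathscr{T}_s)$. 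Unlike the deterministic proof, no worst-case-landscape reduction (\lem{GD-worstcase}) is needed: the lower bound on the signal component comes directly from the injected-perturbation estimate of \lem{stochastic-qp-norm} rather than from preserving a constant fraction $\alpha_t\ge\alpha_{\min}$. The renormalization bookkeeping — identifying the actual $\vect{y}_t$ with the virtual sequence of \lem{stochastic-qp-norm} — is conceptually routine but must be threaded through every inequality consistently.
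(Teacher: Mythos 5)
Your plan follows essentially the same route as the paper: analyze the unrenormalized iterate as an inexact power iteration for $I-\mathcal{H}/\ell$, decompose it into a Gaussian-seeded part $\vect{q}_p$, a stochastic-gradient-error part $\vect{q}_{sg}$, and a quadratic-deviation part $\vect{q}_h$ (this is exactly the paper's \lem{CS-dynamics}); use \lem{stochastic-qp-norm} and \lem{stochastic-ab} to anti-concentrate the signal and \lem{stochastic-contraction-37} to concentrate the minibatch noise; verify the parameter choices make the latter a small fraction of the former; and finish with the final eigen-decomposition displays from \prop{NC-finding}. Two of your editorial claims, however, are off. First, you assert that ``no worst-case-landscape reduction is needed,'' but the paper's proof of \lem{SNC-overlap} explicitly fixes the worst case where $\lambda_1=-\sqrt{\rho\epsilon}$ is the unique eigenvalue $\le -\sqrt{\rho\epsilon}/2$ and all others equal $-\sqrt{\rho\epsilon}/2+\nu$; this is what makes the split $\vect{q}_p=\vect{q}_{p,1}+\vect{q}_{p,\perp'}$ one-dimensional and gives the clean $\beta(t)$ versus $\beta_{\perp'}(t)$ rate comparison.

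Second and more substantively, the final contradiction step as you phrase it would not go through. You claim that under the standing assumption $\|\vect{y}_{t,\perp'}\|\ge r_s\sqrt{\rho\epsilon}/(8\ell)$, the injected Gaussians' contributions to $\mathfrak{S}_{\perp'}$ are ``absorbed multiplicatively'' so that $\|\vect{y}_{t,\perp'}\|$ grows by at most $1+5\sqrt{\rho\epsilon}/(8\ell)$ per step — i.e., you port the per-step recurrence from the proof of \prop{NC-finding}. But unlike the deviation term $\Delta_t$ (whose norm is $\rho r_s^2$, genuinely tiny relative to $r_s\sqrt{\rho\epsilon}/(8\ell)$), the injected term $\xi_t/(\ell L_{t-1})$ has norm $\approx r_s/(\ell L_{t-1})$, and since $L_{t-1}$ starts at $O(1/\ell)$ and only accumulates geometrically, for roughly the first $\Omega(\mathscr{T}_s/\log n)$ steps this is of the \emph{same order} as the standing lower bound, not a small multiplicative fraction of it. The paper never attempts such a per-step bound: in the proof of \lem{qp-dominating} it controls the cumulative $\|\vect{q}_{p,\perp'}(t)\|$ via \lem{stochastic-qp-norm} (with the slower rate $\beta_{\perp'}(t)=(1+\eta\gamma/2)^t/\sqrt{\eta\gamma}$), runs a two-phase induction split at $t_{\perp'}=(\log n)/(\eta\gamma)$, and then directly bounds $\|\hat{\vect{e}}_{\perp'}\|$ at the final iterate rather than by contradiction. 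Since you do invoke the cumulative lemmas and the $\vect{q}_p$ decomposition elsewhere in the plan, the gap is repairable by dropping the per-step multiplicative recurrence for the $\perp'$ component in favor of the cumulative bound $\|\vect{y}_{t,\perp'}\|\le \|\vect{q}_{p,\perp'}(t)\|+\|\vect{q}_h(t)+\vect{q}_{sg}(t)\|$ — but as written, the step that closes the contradiction does not hold.
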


Similarly to \algo{NC-finding} and \algo{ANCGD}, the renormalization step \lin{y_t-renormalization-stochastic} in \algo{SNC-finding} only guarantees that the value $\|\vect{y}_t\|$ would not scales exponentially during the algorithm, and does not affect the output. We thus introduce the following \algo{SNC-finding-WN}, which is the no-renormalization version of \algo{SNC-finding} that possess the same output and a simpler structure. Hence in this subsection, we analyze \algo{SNC-finding-WN} instead of \algo{SNC-finding}.
\begin{algorithm}[htbp]
\caption{Stochastic Negative Curvature Finding without Renormalization($\tilde{\vect{x}},r_s,\mathscr{T}_s,m$).}
\label{algo:SNC-finding-WN}
$\vect{z}_0\leftarrow0$\;
\For{$t=1,...,\mathscr{T}_s$}{
Sample $\left\{\theta^{(1)},\theta^{(2)},\cdots,\theta^{(m)}\right\}\sim\mathcal{D}$\;
$\vect{g}(\vect{z}_{t-1})\leftarrow\frac{\|\vect{z}_{t-1}\|}{r_s}\cdot\frac{1}{m}\sum_{j=1}^m\Big(\vect{g}\Big(\tilde{\vect{x}}+\frac{r_s}{\|\vect{z}_{t-1}\|}\vect{z}_{t-1};\theta^{(j)}\Big)-\vect{g}(\tilde{\vect{x}};\theta^{(j)})\Big)$\;
$\vect{z}_t\leftarrow\vect{z}_{t-1}-\frac{1}{\ell}(\vect{g}(\vect{z}_{t-1})+\xi_t),\qquad\xi_t\sim\mathcal{N}\Big(0,\frac{r_s^2}{d}I\Big)$\label{lin:z_t-definition}\;
}
\textbf{Output} $\vect{z}_{\mathscr{T}}/\|\vect{z}_{\mathscr{T}}\|$.
\end{algorithm}

Without loss of generality, we assume $\tilde{\vect{x}}=\vect{0}$ by shifting $\mathbb{R}^n$ such that $\tilde{\vect{x}}$ is mapped to $\vect{0}$. As argued in the proof of \prop{NC-finding}, $\mathcal{H}(\vect{0})$ admits the following following eigen-decomposition:
\begin{align}
\mathcal{H}(\vect{0})=\sum_{i=1}^{n}\lambda_i\vect{u}_i\vect{u}_i^T,
\end{align}
where the set $\{\vect{u}_i\}_{i=1}^{n}$ forms an orthonormal basis of $\mathbb{R}^n$. Without loss of generality, we assume the eigenvalues $\lambda_1,\lambda_2,\ldots,\lambda_n$ corresponding to $\vect{u}_1,\vect{u}_2,\ldots,\vect{u}_n$ satisfy
\begin{align}
\lambda_1\leq\lambda_2\leq\cdots\leq\lambda_n,
\end{align}
where $\lambda_1\leq-\sqrt{\rho\epsilon}$. If $\lambda_n\leq-\sqrt{\rho\epsilon}/2$, \prop{SNC-finding} holds directly. Hence, we only need to prove the case where $\lambda_n>-\sqrt{\rho\epsilon}/2$, where there exists some $p>1$ and $p'>1$ with
\begin{align}
\lambda_p\leq -\sqrt{\rho\epsilon}\leq \lambda_{p+1},\quad\lambda_{p'}\leq -\sqrt{\rho\epsilon}/2< \lambda_{p'+1}.
\end{align}

\paragraph{Notation:}Throughout this subsection, let $\tilde{\mathcal{H}}:=\mathcal{H}(\tilde{\vect{x}})$. Use $\mathfrak{S}_{\parallel}$, $\mathfrak{S}_{\perp}$ to separately denote the subspace of $\mathbb{R}^{n}$ spanned by $\left\{\vect{u}_1,\vect{u}_2,\ldots,\vect{u}_p\right\}$, $\left\{\vect{u}_{p+1},\vect{u}_{p+2},\ldots,\vect{u}_{n}\right\}$, and use $\mathfrak{S}_{\parallel}'$, $\mathfrak{S}_{\perp}'$ to denote the subspace of $\mathbb{R}^{n}$ spanned by $\left\{\vect{u}_1,\vect{u}_2,\ldots,\vect{u}_{p'}\right\}$, $\left\{\vect{u}_{p'+1},\vect{u}_{p+2},\ldots,\vect{u}_{n}\right\}$. Furthermore, define $\vect{z}_{t,\parallel}:=\sum_{i=1}^p\left\<\vect{u}_i,\vect{z}_t\right\>\vect{u}_i$,
$\vect{z}_{t,\perp}:=\sum_{i=p}^n\left\<\vect{u}_i,\vect{z}_t\right\>\vect{u}_i$,
$\vect{z}_{t,\parallel'}:=\sum_{i=1}^{p'}\left\<\vect{u}_i,\vect{z}_t\right\>\vect{u}_i$,
$\vect{z}_{t,\perp'}:=\sum_{i=p'}^n\left\<\vect{u}_i,\vect{z}_t\right\>\vect{u}_i$
respectively to denote the component of $\vect{z}_t$ in \lin{z_t-definition} of \algo{SNC-finding-WN} in the subspaces $\mathfrak{S}_{\parallel}$, $\mathfrak{S}_{\perp}$, $\mathfrak{S}_{\parallel}'$, $\mathfrak{S}_{\perp}'$, and let $\gamma=-\lambda_1$.

To prove \prop{SNC-finding}, we first introduce the following lemma:

\begin{lemma}\label{lem:SNC-overlap}
Under the setting of \prop{SNC-finding}, for any point $\tilde{\vect{x}}\in\mathbb{R}^{n}$ satisfying $\lambda_{\min}(\nabla^2f(\tilde{\vect{x}})\leq-\sqrt{\rho\epsilon}$, with probability at least $1-3\delta$, \algo{SNC-finding} outputs a unit vector $\hat{\vect{e}}$ satisfying
\begin{align}
\|\hat{\vect{e}}_{\perp'}\|
:=\Big\|\sum_{i=p'}^n\left\<\vect{u}_i,\hat{\vect{e}}\right\>\vect{u}_i\Big\|\leq\frac{\sqrt{\rho\epsilon}}{8\ell}
\end{align}
using $O(m\cdot\mathscr{T}_s)=\tilde{O}\Big(\frac{\log^2 n}{\delta\epsilon^{1/2}}\Big)$ iteartions.
\end{lemma}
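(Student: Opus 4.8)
## Proof proposal for Lemma \ref{lem:SNC-overlap}

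\textbf{Overall strategy.} The plan is to mirror the argument used in the proof of \prop{NC-finding}, but carefully tracking the two additional sources of error present in the stochastic setting: (i) the mini-batch estimation error $\vect{g}(\vect{z}_{t-1})-\nabla h_f(\vect{z}_{t-1})$, which under \assum{stochastic-variance} and \assum{stochastic-lipschitz} is norm-subGaussian with parameter $\sigma\sim (\ell+\tilde\ell)r_s/\sqrt m$ once we have applied the Lipschitz bound to reduce the variance of the gradient difference at scale $r_s$; and (ii) the injected Gaussian noise $\xi_t\sim\mathcal N(0,(r_s^2/d)I)$. As in the non-stochastic case we shift so that $\tilde{\vect{x}}=\vect 0$, subtract the linear term (here handled by subtracting $\vect g(\tilde{\vect x};\theta^{(j)})$ inside the batch), and use the quadratic approximation $\nabla h_f(\vect x)=\mathcal H(\vect 0)\vect x + \vect e(\vect x)$ with $\|\vect e(\vect x)\|\le \rho\|\vect x\|^2$ from \eqn{gradient-approximation-0}. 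The effective iteration is then $\vect z_t \approx (I-\tilde{\mathcal H}/\ell)\vect z_{t-1} + \text{(noise)}_t$, i.e. a noisy power method on $I-\tilde{\mathcal H}/\ell$, whose top eigendirection lies in $\mathfrak S_\parallel'$.

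\textbf{Key steps, in order.} First I would set up a high-probability event via a union bound over all $t\le\mathscr T_s$: on this event the batch error at each step is $\lesssim (\ell+\tilde\ell)r_s\sqrt{\iota}/(m\sqrt{\cdot})$ (using \lem{stochastic-contraction-37} / norm-subGaussian concentration, \defn{nSG}), and the Gaussian perturbations $\xi_t$ are all bounded by $O(r_s\sqrt{\iota})$; the parameter choices in \eqn{stochastic-parameter-choice} are exactly calibrated so that the total per-step deviation from the pure quadratic power iteration is at most $O(\rho r_s/\sqrt{\rho\epsilon})\cdot\|\vect z_t\|$, i.e. the same order as the deterministic deviation $\Delta$ in \eqn{Delta-defn}. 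Second, I would lower bound the initial overlap $\|\vect z_{t,\parallel'}\|/\|\vect z_t\|$ with the negative-curvature subspace: unlike \algo{NC-finding}, here $\vect z_0=\vect 0$, so the first nonzero iterate is driven by $\xi_1$ plus the batch noise; since $\xi_1$ is isotropic Gaussian, its projection onto $\mathfrak S_\parallel'$ (which is at least one-dimensional because $\lambda_1\le-\sqrt{\rho\epsilon}<-\sqrt{\rho\epsilon}/2$) has magnitude $\gtrsim (\delta/\mathscr T_s)\cdot r_s/\sqrt n$ with probability $\ge 1-\delta/\mathscr T_s$ (this is the analogue of \eqn{alpha0-probability}; it is essentially \lem{stochastic-qp-norm} applied to the seeded component). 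Third, I would run the contradiction argument: suppose $\|\vect z_{t,\perp'}\|/\|\vect z_t\| > \sqrt{\rho\epsilon}/(8\ell)$ for all $t\le\mathscr T_s$. Then $\|\vect z_{t,\perp'}\|$ satisfies a recursion with growth factor at most $1+5\sqrt{\rho\epsilon}/(8\ell)$ (eigenvalues in $\mathfrak S_\perp'$ are $\ge -\sqrt{\rho\epsilon}/2$, plus the controlled noise), while the component along $\vect u_1$ grows with factor at least $1+7\sqrt{\rho\epsilon}/(8\ell)$; I would also need a stochastic analogue of \lem{GD-component-lowerbound} showing the parallel fraction stays $\ge$ some $\alpha_{\min}\sim \delta\sqrt{\pi/n}/(\text{poly})$ so the deviation, measured relative to $\|\vect z_{t,\parallel'}\|$, remains small. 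Comparing the two growth rates over $\mathscr T_s = \frac{8\ell}{\sqrt{\rho\epsilon}}\log(\ell n/(\delta\sqrt{\rho\epsilon}))$ steps drives $\|\vect z_{\mathscr T_s,\perp'}\|/\|\vect z_{\mathscr T_s,\parallel'}\|$ below $\sqrt{\rho\epsilon}/(8\ell)$, contradicting the assumption; hence some iterate, and in particular (after checking monotonicity of the ratio as in \prop{NC-finding}) the final output, satisfies $\|\hat{\vect e}_{\perp'}\|\le\sqrt{\rho\epsilon}/(8\ell)$. Finally I would collect the failure probabilities — $\delta$ from the initial-overlap seeding, $\delta$ from the batch-error union bound, $\delta$ from the Gaussian-tail union bound — to get the claimed $1-3\delta$, and note the iteration count is $m\cdot\mathscr T_s=\tilde O(\log^2 n/(\delta\sqrt\epsilon))$ by multiplying the two parameter expressions.

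\textbf{Main obstacle.} The hard part will be the stochastic analogue of \lem{GD-component-lowerbound}: in the deterministic proof one can adversarially choose the worst-case landscape (\lem{GD-worstcase}) and worst-case direction of $\Delta$, but here the noise is genuinely random and, more importantly, the iterate is \emph{re-seeded} with fresh Gaussian noise $\xi_t$ at every step, so the parallel component is not simply a deterministically-amplified version of its initial value — one must argue that the accumulated noise does not wash out the parallel signal, and that the injected noise cannot anti-concentrate the parallel component to near zero at any single step $t\le\mathscr T_s$. I expect to handle this by a union bound over the $\mathscr T_s$ steps using the two-sided bounds of \lem{stochastic-qp-norm} on $\|\vect q_p(t)\|$ (the projection of the noise-driven part onto the leading block), which is exactly why $\iota$ is chosen logarithmically large in $n,\mathscr T_s,\delta$ and why $r_s$ carries the $1/(\rho n\mathscr T_s)$ factor. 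A secondary nuisance is bounding the batch error uniformly over all $t$ despite $\|\vect z_{t-1}\|$ appearing as a scaling in the gradient query; but the renormalization built into the query (we always evaluate at radius exactly $r_s$ from $\tilde{\vect x}$) means the Lipschitz bound gives a clean $\sigma \le (\ell+\tilde\ell)r_s$-scale subGaussian error per sample independent of $\|\vect z_{t-1}\|$, so this reduces to a standard norm-subGaussian mean concentration handled by \lem{stochastic-contraction-37}.
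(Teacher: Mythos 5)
Your high-level picture of the dynamics is right, but your plan takes a genuinely different route from the paper's, and it has a real gap exactly where you yourself anticipate trouble. You propose to port the contradiction argument of \prop{NC-finding}: lower-bound the parallel fraction $\alpha_t=\|\vect z_{t,\parallel'}\|/\|\vect z_t\|$ uniformly in $t$ via a stochastic version of \lem{GD-component-lowerbound}, then compare multiplicative growth rates of the perpendicular and parallel parts. The difficulty you flag --- that the iterate is re-seeded with fresh isotropic noise $\xi_t$ at \emph{every} step, so the parallel component is not a deterministically amplified copy of its initial value --- is not just a technical nuisance; it is fatal to a ratio-tracking argument. Since $\vect z_0=\vect 0$, the early iterates are noise-dominated, with $\|\vect z_t\|\sim r_s$ and $\alpha_t\sim 1/\sqrt n$ a genuinely random quantity, and a fresh $\xi_t$ of scale $r_s$ arriving each step is \emph{not} small relative to $\|\vect z_{t,\parallel}\|$, which is the precondition that makes the recurrences in \lem{GD-component-lowerbound} close. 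The worst-case reduction (\lem{GD-worstcase}) also does not transfer: the noise is random rather than adversarially aligned, and its interaction with the running iterate is not monotone in the eigenvalue configuration.

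The paper sidesteps $\alpha_t$ entirely. It uses an exact linear superposition (\lem{CS-dynamics}): $\vect z_t=-\vect q_h(t)-\vect q_{sg}(t)-\vect q_p(t)$, where $\vect q_p(t)=\eta\sum_{\tau<t}(I-\eta\tilde{\mathcal H})^{t-1-\tau}\xi_\tau$ is the injected Gaussian noise filtered through the exact linearized power iteration, $\vect q_{sg}$ collects the mini-batch errors, and $\vect q_h$ collects the Hessian-Lipschitz deviations. Because $\vect q_p(t)$ is a sum of \emph{independent} Gaussians with explicitly, geometrically weighted covariances, its component along $\vect u_1$ (in the worst case $\mathfrak S_{\parallel'}=\mathrm{span}\{\vect u_1\}$) admits two-sided bounds from \lem{stochastic-qp-norm}, giving directly that $\vect q_p(\mathscr T_s)$ is strongly aligned with $\vect u_1$ --- no lower bound on the parallel fraction of the \emph{iterate} $\vect z_t$ is ever needed. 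A separate induction (\lem{qp-dominating}) controls $\|\vect q_h(t)+\vect q_{sg}(t)\|$ as a small fraction of $\|\vect q_p(t)\|$; the key trick is that the induction bounds $\|\vect z_\tau\|$ for $\tau\le t$ in terms of $\|\vect q_p\|$ itself, not in terms of any $\alpha_\tau$. The conclusion $\|\hat{\vect e}_{\perp'}\|\le\sqrt{\rho\epsilon}/(8\ell)$ then follows by the triangle inequality, and the three $\delta$'s you correctly anticipated come from the two-sided $\vect q_p$ bound plus this induction. To rescue your plan you would essentially need to reinvent this decomposition, because the uniform $\alpha_{\min}$ bound you want is simply false during the noise-seeding phase.
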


\subsubsection{Proof of \lem{SNC-overlap}}
In the proof of \lem{SNC-overlap}, we consider the worst case, where $\lambda_1=-\gamma=-\sqrt{\rho\epsilon}$ is the only eigenvalue less than $-\sqrt{\rho\epsilon}/2$, and all other eigenvalues equal to $-\sqrt{\rho\epsilon}/2+\nu$ for an arbitrarily small constant $\nu$. Under this scenario, the component $\vect{z}_{t,\perp'}$ is as small as possible at each time step.

The following lemma characterizes the dynamics of \algo{SNC-finding-WN}:

\begin{lemma}\label{lem:CS-dynamics}
Consider the sequence $\left\{\vect{z}_i\right\}$ and let $\eta=1/\ell$. Further, for any $0\leq t\leq \mathscr{T}_s$ we define
\begin{align}
\zeta_{t}:=\vect{g}(\vect{z}_{t-1})-\frac{\|\vect{z}_{t}\|}{r_s}\Big(\nabla f\Big(\tilde{\vect{x}}+\frac{r_s}{\|\vect{z}_t\|}\vect{z}_t\Big)-\nabla f(\tilde{\vect{x}})\Big),
\end{align}
to be the errors caused by the stochastic gradients. Then $\vect{z}_t=-\vect{q}_h(t)-\vect{q}_{sg}(t)-\vect{q}_p(t)$, where:
\begin{align}
\vect{q}_h(t):=\eta\sum_{\tau=0}^{t-1}(I-\eta\tilde{\mathcal{H}})^{t-1-\tau}\Delta_{\tau}\hat{\vect{z}}_{\tau},
\end{align}
for $\Delta_{\tau}=\int_{0}^{1}\mathcal{H}_f\big(\psi\frac{r_s}{\|\vect{z}_{\tau}\|}\vect{z}_{\tau}\big)\d \psi-\tilde{\mathcal{H}}$, and
\begin{align}
\vect{q}_{sg}(t):=\eta\sum_{\tau=0}^{t-1}(I-\eta\tilde{\mathcal{H}})^{t-1-\tau}\zeta_{\tau},\quad\vect{q}_{p}(t):=\eta\sum_{\tau=0}^{t-1}(I-\eta\tilde{\mathcal{H}})^{t-1-\tau}\xi_{\tau}.
\end{align}
\end{lemma}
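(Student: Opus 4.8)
\emph{Proof plan.} Since \lem{CS-dynamics} is an exact algebraic identity with no probabilistic content, the plan is simply to unroll the linear recursion that governs $\{\vect{z}_t\}$ in \algo{SNC-finding-WN} and read off the three summands. As usual we may assume $\tilde{\vect{x}}=\vect{0}$ after translating $\mathbb{R}^n$, and the prescription $\eta=1/\ell$ is used only to match the notation of the algorithm — the identity itself holds verbatim for any fixed step size.

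First I would rewrite \lin{z_t-definition} as $\vect{z}_t=\vect{z}_{t-1}-\eta\big(\vect{g}(\vect{z}_{t-1})+\xi_t\big)$ with $\vect{z}_0=\vect{0}$, and expand the effective oracle $\vect{g}(\vect{z}_{t-1})$. By the very definition of $\zeta_t$ we have $\vect{g}(\vect{z}_{t-1})=\tfrac{\|\vect{z}_{t-1}\|}{r_s}\big(\nabla f(\vect{w}_{t-1})-\nabla f(\vect{0})\big)+\zeta_t$, where $\vect{w}_{t-1}:=\tfrac{r_s}{\|\vect{z}_{t-1}\|}\vect{z}_{t-1}$ is the point of norm $r_s$ along $\vect{z}_{t-1}$. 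The integral (mean-value) form of the gradient gives $\nabla f(\vect{w}_{t-1})-\nabla f(\vect{0})=\big(\int_0^1\mathcal{H}_f(\psi\vect{w}_{t-1})\,\d\psi\big)\vect{w}_{t-1}$, and since $\tfrac{\|\vect{z}_{t-1}\|}{r_s}\vect{w}_{t-1}=\vect{z}_{t-1}$, this collapses to $\big(\tilde{\mathcal{H}}+\Delta_{t-1}\big)\vect{z}_{t-1}$ with $\Delta_{t-1}$ exactly as in the statement. Substituting back yields the one-step recursion $\vect{z}_t=(I-\eta\tilde{\mathcal{H}})\vect{z}_{t-1}-\eta\big(\Delta_{t-1}\vect{z}_{t-1}+\zeta_t+\xi_t\big)$.

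Next I would iterate this recursion. Because $\vect{z}_0=\vect{0}$, the homogeneous part $(I-\eta\tilde{\mathcal{H}})^t\vect{z}_0$ disappears and one obtains $\vect{z}_t=-\eta\sum_{\tau=1}^{t}(I-\eta\tilde{\mathcal{H}})^{t-\tau}\big(\Delta_{\tau-1}\vect{z}_{\tau-1}+\zeta_\tau+\xi_\tau\big)$; a shift of the summation index then separates this into precisely $-\vect{q}_h(t)-\vect{q}_{sg}(t)-\vect{q}_p(t)$ with the three series as stated, keeping the operator power $(I-\eta\tilde{\mathcal{H}})^{t-1-\tau}$ written on the left of $\Delta_\tau$ so that no commutation between $\tilde{\mathcal{H}}$ and the $\Delta_\tau$'s is ever invoked. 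This finishes the identity.

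There is no genuine analytic obstacle here; the only thing requiring care is the index bookkeeping — tracking that the stochastic-gradient error $\zeta$ and the injected Gaussian $\xi$ at a given iteration are paired with the Hessian-deviation matrix $\Delta$ evaluated at the \emph{previous} iterate, and that the telescoped operator powers land in the stated ranges. The reason this decomposition matters — and what the rest of \append{SNC-finding} exploits — is that each piece can then be controlled separately: $\vect{q}_h$ via the $\rho$-Hessian-Lipschitz bound $\|\Delta_\tau\|\le\tfrac{\rho}{2}\|\vect{w}_\tau\|=\tfrac{\rho r_s}{2}$; $\vect{q}_{sg}$ via \assum{stochastic-variance}, \assum{stochastic-lipschitz} and the norm-subGaussian martingale estimate \lem{stochastic-contraction-37} (the $m$-sample averaging shrinks the per-step variance); and $\vect{q}_p$ via Gaussian concentration as in \lem{stochastic-qp-norm}. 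Meanwhile the common factor $(I-\eta\tilde{\mathcal{H}})^{t-1-\tau}$ geometrically amplifies the eigendirection of $\lambda_1=-\gamma$, which is the mechanism that makes the power method work, exactly mirroring the deterministic argument behind \prop{NC-finding}.
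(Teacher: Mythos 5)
Your proposal is correct and takes essentially the same route as the paper's proof: rewrite the update in \lin{z_t-definition} using the integral (mean-value) form of the gradient difference to extract $\tilde{\mathcal{H}}+\Delta_{t-1}$, then unroll the linear recursion from $\vect{z}_0=\vect{0}$ and split the sum into the three claimed pieces. Your observation that the stochastic error and noise at step $t$ pair with $\Delta$ evaluated at the previous iterate, and your careful index shift, correctly reconcile the (slightly loose) indexing in the statement of the lemma.
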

\begin{proof}
Without loss of generality we assume $\tilde{\vect{x}}=\vect{0}$. The update formula for $\vect{z}_t$ can be written as
\begin{align}
\vect{z}_{t+1}=\vect{z}_t-\eta\Big(\frac{\|\vect{z}_{t}\|}{r_s}\Big(\nabla f\Big(\frac{r_s}{\|\vect{z}_t\|}\vect{z}_t\Big)-\nabla f(\vect{0})\Big)+\zeta_t+\xi_t\Big),
\end{align}
where
\begin{align}
\frac{\|\vect{z}_{t}\|}{r_s}\Big(\nabla f\Big(\frac{r_s}{\|\vect{z}_t\|}\vect{z}_t\Big)-\nabla f(\vect{0})\Big)
=\frac{\|\vect{z}_{t}\|}{r_s}\int_{0}^{1}\mathcal{H}_f\Big(\psi\frac{r_s}{\|\vect{z}_t\|}\vect{z}_t\Big)\frac{r_s}{\|\vect{z}_t\|}\vect{z}_t\d\psi=(\tilde{\mathcal{H}}+\Delta_t)\vect{z}_t,
\end{align}
indicating
\begin{align}
\vect{z}_{t+1}&=(I-\eta\tilde{\mathcal{H}})\vect{x}_t-\eta(\Delta_t\vect{z}_t+\zeta_t+\xi_t)\\
&=-\eta\sum_{\tau=0}^t(I-\eta\tilde{\mathcal{H}})^{t-\tau}(\Delta_t\vect{z}_t+\zeta_t+\xi_t),
\end{align}
which finishes the proof.
\end{proof}
At a high level, under our parameter choice in \prop{SNC-finding}, $\vect{q}_p(t)$ is the dominating term controlling the dynamics, and $\vect{q}_h(t)+\vect{q}_{sg}(t)$ will be small compared to $\vect{q}_p(t)$. Quantitatively, this is shown in the following lemma:
\begin{lemma}\label{lem:qp-dominating}
Under the setting of \prop{SNC-finding} while using the notation in \lem{stochastic-ab} and \lem{CS-dynamics}, we have
\begin{align}
\Pr\Big(\|\vect{q}_h(t)+\vect{q}_{sg}(t)\|\leq\frac{\beta(t)\eta r_s\delta}{20\sqrt{n}}\cdot\frac{\sqrt{\rho\epsilon}}{16\ell},\ \forall t\leq\mathscr{T}_s\Big)\geq 1-\delta,
\end{align}
where $-\gamma:=\lambda_{\min}(\tilde{\mathcal{H}})=-\sqrt{\rho\epsilon}$.
\end{lemma}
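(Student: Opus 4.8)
The plan is to bound $\|\vect{q}_h(t)\|$ and $\|\vect{q}_{sg}(t)\|$ separately, each by a quantity that is much smaller than the lower bound on $\|\vect{q}_p(t)\|$ coming from \lem{stochastic-qp-norm}. Throughout I will use the operator-norm estimate $\|(I-\eta\tilde{\mathcal{H}})^{t-1-\tau}\|\le(1+\eta\gamma)^{t-1-\tau}$, which holds because the eigenvalues of $\tilde{\mathcal{H}}$ lie in $[-\gamma,\ell]$ and $\eta=1/\ell$, so $I-\eta\tilde{\mathcal{H}}$ has eigenvalues in $[0,1+\eta\gamma]$. Summing the geometric-type series and recalling $\alpha(t)=\big[\sum_{\tau=0}^{t-1}(1+\eta\gamma)^{2(t-1-\tau)}\big]^{1/2}$ and $\beta(t)=(1+\eta\gamma)^t/\sqrt{2\eta\gamma}$ from \lem{stochastic-ab}, the natural scale appearing in all estimates is $\eta\,\alpha(t)$, and by \lem{stochastic-ab}(1) we have $\alpha(t)\le\beta(t)$, so every bound I derive in terms of $\alpha(t)$ can be replaced by one in terms of $\beta(t)$.

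First I would handle the deterministic "Hessian-deviation'' term $\vect{q}_h(t)=\eta\sum_{\tau=0}^{t-1}(I-\eta\tilde{\mathcal{H}})^{t-1-\tau}\Delta_\tau\hat{\vect{z}}_\tau$. Since $\Delta_\tau=\int_0^1\mathcal{H}_f\big(\psi\frac{r_s}{\|\vect{z}_\tau\|}\vect{z}_\tau\big)\d\psi-\tilde{\mathcal{H}}$ and $f$ is $\rho$-Hessian Lipschitz, $\|\Delta_\tau\|\le\rho r_s$; as $\hat{\vect{z}}_\tau$ is a unit vector this gives $\|\Delta_\tau\hat{\vect{z}}_\tau\|\le\rho r_s$. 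Hence $\|\vect{q}_h(t)\|\le\eta\rho r_s\sum_{\tau=0}^{t-1}(1+\eta\gamma)^{t-1-\tau}\le\eta\rho r_s\cdot\alpha(t)\cdot\sqrt{t}\le\eta\rho r_s\sqrt{\mathscr{T}_s}\,\beta(t)$, using Cauchy--Schwarz to pass from the $\ell_1$ sum to $\alpha(t)\sqrt t$. Plugging in $r_s=\frac{\delta}{480\rho n\mathscr{T}_s}\sqrt{\rho\epsilon/\iota}$ one checks that $\eta\rho r_s\sqrt{\mathscr{T}_s}=\frac{\eta\delta}{480 n}\sqrt{\rho\epsilon/(\mathscr{T}_s\iota)}$, which is comfortably below $\frac{\eta r_s\delta}{40\sqrt n}\cdot\frac{\sqrt{\rho\epsilon}}{16\ell}$ — this is a routine arithmetic comparison of the chosen constants, and it is deterministic, so it contributes nothing to the failure probability.

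The stochastic-gradient term $\vect{q}_{sg}(t)=\eta\sum_{\tau=0}^{t-1}(I-\eta\tilde{\mathcal{H}})^{t-1-\tau}\zeta_\tau$ is the substantive part. Here $\zeta_\tau$ is the difference between the $m$-sample averaged finite-difference estimate and its expectation $\frac{\|\vect{z}_\tau\|}{r_s}\big(\nabla f(\tilde{\vect{x}}+\tfrac{r_s}{\|\vect{z}_\tau\|}\vect{z}_\tau)-\nabla f(\tilde{\vect{x}})\big)$. Each summand $\vect{g}(\vect{x}_0+\vect{y};\theta^{(j)})-\vect{g}(\vect{x}_0;\theta^{(j)})$, after subtracting its mean and scaling by $\|\vect{z}_\tau\|/r_s$, is norm-subGaussian: by \assum{stochastic-lipschitz} the two stochastic-gradient evaluations differ by at most $\tilde\ell\cdot r_s$ in a $\theta$-wise deterministic way, and by \assum{stochastic-variance} each gradient has subGaussian parameter $\sigma$, so the centered finite difference is nSG with parameter $O((\ell+\tilde\ell)r_s)$; averaging $m$ i.i.d.\ copies and rescaling by $\|\vect{z}_\tau\|/r_s\le 1$ (after renormalization $\|\vect{z}_\tau\|$ is controlled, or one argues directly on $\hat{\vect z}_\tau$) gives $\zeta_\tau$ nSG with parameter $\sigma_\tau=O((\ell+\tilde\ell)/\sqrt m)$. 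The sequence $\{(I-\eta\tilde{\mathcal{H}})^{t-1-\tau}\zeta_\tau\}_\tau$ is a martingale difference sequence with per-step nSG parameter $(1+\eta\gamma)^{t-1-\tau}\sigma_\tau$, whose squared sum is $O((\ell+\tilde\ell)^2/m)\cdot\alpha(t)^2$. Applying the norm-subGaussian martingale concentration \lem{stochastic-contraction-37} with $b$ a small polynomial floor, $B$ chosen so that the "$\sum\sigma_i^2\ge B$'' branch is vacuous, and $\iota$ as in \eqn{stochastic-parameter-choice}, and taking a union bound over all $t\le\mathscr{T}_s$ (the $\mathscr{T}_s^2$ and $\log(\sqrt n/(\eta r_s))$ factors inside $\iota$ are exactly what absorbs this union bound and the internal $2n\log(B/b)$ factor), yields with probability $\ge1-\delta$ that $\|\vect{q}_{sg}(t)\|\le c\eta\alpha(t)\sqrt{\iota}\cdot(\ell+\tilde\ell)/\sqrt m\le\eta\beta(t)\sqrt{\iota}\cdot c(\ell+\tilde\ell)/\sqrt m$ for all such $t$. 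Substituting $m=\frac{160(\ell+\tilde\ell)}{\delta\sqrt{\rho\epsilon}}\sqrt{\mathscr{T}_s\iota}$ makes $c(\ell+\tilde\ell)\sqrt\iota/\sqrt m$ smaller than $\frac{r_s\delta}{40\sqrt n}\cdot\frac{\sqrt{\rho\epsilon}}{16\ell}$ — again a direct constant check given the choices in \prop{SNC-finding}. Adding the two bounds and using $\frac{1}{40}+\frac{1}{40}=\frac{1}{20}$ gives the claimed $\frac{\beta(t)\eta r_s\delta}{20\sqrt n}\cdot\frac{\sqrt{\rho\epsilon}}{16\ell}$, and the only randomness used is the $1-\delta$ event from the martingale bound.

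The main obstacle is the stochastic-gradient step: one must correctly identify the nSG parameter of the single-sample centered finite difference (this is where \assum{stochastic-lipschitz} is essential — it makes $\vect{g}(\vect{x}_0+\vect{y};\theta)-\vect{g}(\vect{x}_0;\theta)$ bounded $\theta$-wise rather than merely subGaussian with a variance that blows up as $r_s\to 0$), then set up $\{(I-\eta\tilde{\mathcal H})^{t-1-\tau}\zeta_\tau\}$ as a martingale difference sequence (the filtration is generated by $\xi_1,\dots,\xi_{t-1}$ and the sampled $\theta$'s, and one needs that $\vect z_\tau$ hence the direction $\hat{\vect z}_\tau$ is measurable with respect to the past), and apply \lem{stochastic-contraction-37} with the bookkeeping of $b,B,\iota$ arranged so the union bound over $t\le\mathscr T_s$ and the logarithmic factor in the lemma are both swallowed by the definition of $\iota$. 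Everything else — the $\vect q_h$ bound and the two final constant comparisons — is elementary.
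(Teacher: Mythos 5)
The central gap in your proposal is that it does not carry out the bootstrap/induction that the paper's proof actually rests on. Despite the typography in \lem{CS-dynamics}, $\vect{q}_h(t)$ in this setting is $\eta\sum_{\tau<t}(I-\eta\tilde{\mathcal{H}})^{t-1-\tau}\Delta_\tau\vect{z}_\tau$ with the \emph{unnormalized} iterate $\vect{z}_\tau$ of \algo{SNC-finding-WN} --- this is the only definition compatible with the decomposition $\vect{z}_t=-\vect{q}_h(t)-\vect{q}_{sg}(t)-\vect{q}_p(t)$, which you must use, and the proof of \lem{CS-dynamics} writes $\Delta_\tau\vect{z}_\tau$ explicitly. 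Since $\|\vect{z}_\tau\|$ grows exponentially (essentially like $\beta(\tau)\eta r_s\sqrt{\iota}$), the summand $\|\Delta_\tau\|\|\vect{z}_\tau\|\leq\rho r_s\|\vect{z}_\tau\|$ is not uniformly $O(\rho r_s)$, and your bound $\|\vect{q}_h(t)\|\leq\eta\rho r_s\sum_\tau(1+\eta\gamma)^{t-1-\tau}$ does not hold. Controlling $\|\vect{z}_\tau\|$ requires exactly the bound $\|\vect{q}_h(\tau)+\vect{q}_{sg}(\tau)\|\lesssim\beta(\tau)\eta r_s\cdot\text{(small)}$ that you are trying to establish, together with \lem{stochastic-qp-norm} for $\|\vect{q}_p(\tau)\|$; this circularity is broken by the inductive recurrence over $\tau\leq t$ in the paper. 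Your proposal asserts both bounds directly, which is a genuinely different and invalid route. The same problem infects the stochastic term: the nSG parameter of $\zeta_\tau$ is $(\ell+\tilde\ell)\|\vect{z}_\tau\|/\sqrt{m}$, not $(\ell+\tilde\ell)/\sqrt{m}$, so you again need $\|\vect{z}_\tau\|$ under control before you can apply \lem{stochastic-contraction-37}. The hedge ``after renormalization $\|\vect{z}_\tau\|$ is controlled'' does not help, since the lemma is proved for the non-renormalized \algo{SNC-finding-WN}.

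Separately, even granting your (incorrect) unit-vector reading of $\hat{\vect{z}}_\tau$, the deterministic constant check for $\vect{q}_h$ fails: you obtain $\|\vect{q}_h(t)\|\lesssim\eta\rho r_s\sqrt{\mathscr{T}_s}\,\beta(t)$, and the target is $\frac{\eta r_s\delta}{40\sqrt n}\cdot\frac{\sqrt{\rho\epsilon}}{16\ell}\beta(t)$; dividing out $\eta r_s\beta(t)$, the requirement becomes $\rho\sqrt{\mathscr{T}_s}\lesssim\frac{\delta\sqrt{\rho\epsilon}}{640\ell\sqrt n}$, whose left side is bounded below by a constant while the right side scales like $\delta/\sqrt n$ --- not ``comfortably below'', it is not below at all for large $n$. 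You are also missing the two-phase structure that is where the factor $1/\sqrt n$ actually comes from: the paper first bounds $\|\vect{q}_h(\tau)+\vect{q}_{sg}(\tau)\|$ against the slower rate $\beta_{\perp'}(\tau)=(1+\eta\gamma/2)^\tau/\sqrt{\eta\gamma}$ for $\tau\leq t_{\perp'}=\log n/(\eta\gamma)$, then switches at $t_{\perp'}$ (where $\beta(t)/\beta_{\perp'}(t)\approx\sqrt n$) to a bound against $\beta(\tau)/\sqrt n$, using the lower bound on $\|\vect{q}_{p,1}\|$ from \lem{stochastic-qp-norm} to restart the induction. Your one-shot estimate cannot reproduce this.
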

\begin{proof}
Divide $\vect{q}_p(t)$ into two parts:
\begin{align}
\vect{q}_{p,1}(t):=\left\<\vect{q}_{p}(t),\vect{u}_1\right\>\vect{u}_1,
\end{align}
and
\begin{align}
\vect{q}_{p,\perp'}(t):=\vect{q}_p(t)-\vect{q}_{p,1}(t).
\end{align}
Then by \lem{stochastic-qp-norm}, we have
\begin{align}
\Pr\Big(\|\vect{q}_{p,1}(t)\|\leq\frac{\beta(t)\eta r_s}{\sqrt{n}}\cdot\sqrt{\iota}\Big)\geq 1-2e^{-\iota},
\end{align}
and
\begin{align}
\Pr\Big(\|\vect{q}_{p,1}(t)\|\geq\frac{\beta(t)\eta r_s}{20\sqrt{n}}\cdot\delta\Big)\geq 1-\delta/4.
\end{align}
Similarly,
\begin{align}
\Pr\Big(\|\vect{q}_{p,\perp'}(t)\|\leq\beta_{\perp'}(t)\eta r_s\cdot\sqrt{\iota}\Big)\geq 1-2e^{-\iota},
\end{align}
and
\begin{align}
\Pr\Big(\|\vect{q}_{p,\perp'}(t)\|\geq\frac{\beta_{\perp'}(t)\eta r_s}{20}\cdot\delta\Big)\geq 1-\delta/4,
\end{align}
where $\beta_{\perp'}(t):=\frac{(1+\eta\gamma/2)^t}{\sqrt{\eta\gamma}}$. Set $t_{\perp'}:=\frac{\log n}{\eta\gamma}$. Then for all $\tau\leq t_{\perp'}$, we have
\begin{align}
\frac{\beta(\tau)}{\beta_{\perp'}(\tau)}\leq\sqrt{n},
\end{align}
which further leads to
\begin{align}
\Pr\Big(\|\vect{q}_{p,\perp'}(\tau)\|\leq 2\beta_{\perp'}(t)\eta r_s\cdot\sqrt{\iota}\Big)\geq 1-2e^{-\iota}.
\end{align}
Next, we use induction to prove that the following inequality holds for all $t\leq t_{\perp'}$:
\begin{align}\label{eqn:first-recurrence}
\Pr\Big(\|\vect{q}_h(\tau)+\vect{q}_{sg}(\tau)\|\leq\beta_{\perp'}(\tau)\eta r_s\cdot\frac{\delta}{20},\ \forall \tau\leq t\Big)\geq 1-10n t^2\log\Big(\frac{\sqrt{n}}{\eta r_s}\Big)e^{-\iota}.
\end{align}
For the base case $t=0$, the claim holds trivially. Suppose it holds for all $\tau\leq t$ for some $t$. Then due to \lem{stochastic-qp-norm}, with probability at least $1-2t_{\perp'} e^{-\iota}$, we have
\begin{align}
\|\vect{z}_{t}\|\leq \eta\|\vect{q}_p(t)\|+\eta\|\vect{q}_h(t)+\vect{q}_{sg}(t)\|\leq 3\beta_{\perp'}(\tau)\eta r_s\cdot\sqrt{\iota}.
\end{align}
By the Hessian Lipschitz property, $\Delta_{\tau}$ satisfies:
\begin{align}
\|\Delta_{\tau}\|\leq\rho r_s.
\end{align}
Hence,
\begin{align}
\|\vect{q}_h(t+1)\|&\leq\big\|\eta\sum_{\tau=0}^{t}(I-\eta\tilde{\mathcal{H}})^{t-\tau}\Delta_{\tau}\vect{z}_{\tau}\big\|\\
&\leq \eta\rho r_s\sum_{\tau=0}^{t}(I-\eta\tilde{\mathcal{H}})^{t-\tau}\|\vect{z}_{\tau}\|\\
&\leq(\eta\rho r_s n\mathscr{T}_s)\cdot(3\beta_{\perp'}(t)\eta r_s)\cdot\sqrt{\iota}\\
&\leq\frac{\beta_{\perp'}(t+1)\eta r_s}{10\sqrt{n}}\cdot\frac{\delta\sqrt{\rho\epsilon}}{16\ell}.
\end{align}
As for $\vect{q}_{sg}(t)$, note that $\hat{\zeta}_{\tau}|\mathcal{F}_{\tau-1}$ satisfies the norm-subGaussian property defined in \defn{nSG}. Specifically, $\hat{\zeta}_{\tau}|\mathcal{F}_{\tau-1}\sim \text{nSG}((\ell+\tilde{\ell})\|\hat{\vect{z}}_{\tau}\|/\sqrt{m})$. By applying \lem{stochastic-contraction-37} with $b=\alpha^2(t)\cdot\eta^2(\ell+\tilde{\ell})^2/m$ and $b=\alpha^2(t)\eta^2(\ell+\tilde{\ell})^2\eta^2 r_s^2/(mn)$, with probability at least
\begin{align}
1-4n\cdot\log\Big(\frac{\sqrt{n}}{\eta r_s}\Big)\cdot e^{-\iota},
\end{align}
we have
\begin{align}
\|\vect{q}_{sg}(t+1)\|\leq\frac{\eta(\ell+\tilde{\ell})\sqrt{t}}{m}\cdot(\beta_{\perp}(t)\eta r_s)\cdot\sqrt{\iota}\leq\frac{\beta_{\perp'}(t+1)\eta r_s}{20}\cdot\frac{\delta\sqrt{\rho\epsilon}}{8\ell}.
\end{align}
Then by union bound, with probability at least
\begin{align}
1-10n (t+1)^2\log\Big(\frac{\sqrt{n}}{\eta r_s}\Big)e^{-\iota},
\end{align}
we have
\begin{align}
\|\vect{q}_h(t+1)+\vect{q}_{sg}(t+1)\|\leq\beta_{\perp'}(t+1)\eta r_s\cdot\frac{\delta}{20}\cdot\frac{\sqrt{\rho\epsilon}}{8\ell},
\end{align}
indicating that \eqn{first-recurrence} holds. Then with probability at least
\begin{align}
1-10n t_{\perp'}^2\log\Big(\frac{\sqrt{n}}{\eta r_s}\Big)e^{-\iota}-\delta/4,
\end{align}
we have
\begin{align}
\|\vect{q}_h(t_{\perp'})+\vect{q}_{sg}(t_{\perp'})\|\leq\|\vect{q}_{p,1}(t_{\perp'})\|\cdot\frac{\sqrt{\rho\epsilon}}{16\ell}.
\end{align}
Based on this, we prove that the following inequality holds for any $t_{\perp'}\leq t\leq\mathscr{T}_s$:
\begin{align}\label{eqn:second-recurrence}
\Pr\Big(\|\vect{q}_h(\tau)+\vect{q}_{sg}(\tau)\|\leq\frac{\beta(\tau)\eta r_s}{20\sqrt{n}}\cdot\frac{\delta\sqrt{\rho\epsilon}}{16\ell},\ \forall t_{\perp'}\leq\tau\leq t\Big)\geq 1-10n t^2\log\Big(\frac{\sqrt{n}}{\eta r_s}\Big)e^{-\iota}.
\end{align}
We still use recurrence to prove it. Note that its base case $\tau=t_{\perp'}$ is guaranteed by \eqn{first-recurrence}. Suppose it holds for all $\tau\leq t$ for some $t$. Then with probability at least $1-2t e^{-\iota}$, we have
\begin{align}
\|\vect{z}_{t}\|
&\leq \eta\|\vect{q}_p(t)\|+\eta\|\vect{q}_h(t)+\vect{q}_{sg}(t)\|\\
&\leq 2\|\vect{q}_{p,1}(t)\|+\eta\|\vect{q}_h(t)+\vect{q}_{sg}(t)\|\\
&\leq \frac{3\beta(\tau)\eta r_s}{\sqrt{n}}\cdot\sqrt{\iota}.
\end{align}
Then following a similar procedure as before, we can claim that
\begin{align}
\|\vect{q}_h(t+1)+\vect{q}_{sg}(t+1)\|\leq\frac{\beta(t+1)\eta r_s}{\sqrt{n}}\cdot\frac{\delta}{20}\cdot\frac{\sqrt{\rho\epsilon}}{8\ell},
\end{align}
holds with probability
\begin{align}
1-10n (t+1)^2\log\Big(\frac{\sqrt{n}}{\eta r_s}\Big)e^{-\iota}-\frac{\delta}{4},
\end{align}
indicating that \eqn{second-recurrence} holds. Then under our choice of parameters, the desired inequality
\begin{align}
\|\vect{q}_h(t)+\vect{q}_{sg}(t)\|\leq\frac{\beta(t)\eta r_s\delta}{20\sqrt{n}}\cdot\frac{\sqrt{\rho\epsilon}}{16\ell}
\end{align}
holds with probability at least $1-\delta$.
\end{proof}

Equipped with \lem{qp-dominating}, we are now ready to prove \lem{SNC-overlap}.
\begin{proof}
First note that under our choice of $\mathscr{T}_s$, we have
\begin{align}
\Pr\Big(\frac{\|\vect{q}_{p,\perp'}(\mathscr{T}_s)\|}{\|\vect{q}_{p,1}(\mathscr{T}_s)\|}\leq\frac{\sqrt{\rho\epsilon}}{16\ell}\Big)\geq 1-\delta.
\end{align}
Further by \lem{qp-dominating} and union bound, with probability at least $1-2\delta$,
\begin{align}
\frac{\|\vect{q}_h(\mathscr{T}_s)+\vect{q}_{sg}(\mathscr{T}_s)\|}{\|\vect{q}_p(\mathscr{T}_s)\|}
\leq \|\vect{q}_h(\mathscr{T}_s)+\vect{q}_{sg}(\mathscr{T}_s)\|\cdot \frac{20\sqrt{n}}{\delta \beta(t)\eta r_s}
\leq \frac{\sqrt{\rho\epsilon}}{16\ell}.
\end{align}
For the output $\hat{\vect{e}}$, observe that its component $\hat{\vect{e}}_{\perp'}=\hat{\vect{e}}-\hat{\vect{e}}_1$, since $\vect{u}_1$ is the only component in subspace $\mathfrak{S}_{\parallel'}$.
Then with probability at least $1-3\delta$,
\begin{align}
\|\hat{\vect{e}}_{\perp'}\|\leq\sqrt{\rho\epsilon}/(8\ell).
\end{align}
\end{proof}

\subsubsection{Proof of \prop{SNC-finding}}
Based on \lem{SNC-overlap}, we present the proof of \prop{SNC-finding} as follows:
\begin{proof}
By \lem{SNC-overlap}, the component $\hat{\vect{e}}_{\perp'}$ of output $\vect{e}$ satisfies
\begin{align}
\|\hat{\vect{e}}_{\perp'}\|\leq\frac{\sqrt{\rho\epsilon}}{8\ell}.
\end{align}
Since $\hat{\vect{e}}=\hat{\vect{e}}_{\parallel'}+\hat{\vect{e}}_{\perp'}$, we can derive that
\begin{align}
\|\hat{\vect{e}}_{\parallel'}\|\geq\sqrt{1-\frac{\rho\epsilon}{(8\ell)^2}}\geq 1-\frac{\rho\epsilon}{(8\ell)^2}.
\end{align}
Note that
\begin{align}
\hat{\vect{e}}^{T}\tilde{\mathcal{H}}\hat{\vect{e}}=(\hat{\vect{e}}_{\perp'}+\hat{\vect{e}}_{\parallel'})^{T}\tilde{\mathcal{H}}(\hat{\vect{e}}_{\perp'}+\hat{\vect{e}}_{\parallel'}),
\end{align}
which can be further simplified to
\begin{align}
\hat{\vect{e}}^{T}\tilde{\mathcal{H}}\hat{\vect{e}}=\hat{\vect{e}}_{\perp'}^{T}\tilde{\mathcal{H}}\hat{\vect{e}}_{\perp'}+\hat{\vect{e}}_{\parallel'}^{T}\tilde{\mathcal{H}}\hat{\vect{e}}_{\parallel'}.
\end{align}
Due to the $\ell$-smoothness of the function, all eigenvalue of the Hessian matrix has its absolute value upper bounded by $\ell$. Hence,
\begin{align}
\hat{\vect{e}}_{\perp}^{T}\tilde{\mathcal{H}}\hat{\vect{e}}_{\perp}\leq\ell\|\hat{\vect{e}}_{\perp}^{T}\|_{2}^2=\frac{\rho\epsilon}{64\ell^2},
\end{align}
whereas
\begin{align}
\hat{\vect{e}}_{\parallel'}^{T}\tilde{\mathcal{H}}\hat{\vect{e}}_{\parallel'}\leq-\frac{\sqrt{\rho\epsilon}}{2}\|\hat{\vect{e}}_{\parallel'}\|^2.
\end{align}
Combining these two inequalities together, we can obtain
\begin{align}
\hat{\vect{e}}^{T}\tilde{\mathcal{H}}\hat{\vect{e}}&=\hat{\vect{e}}_{\perp'}^{T}\tilde{\mathcal{H}}\hat{\vect{e}}_{\perp'}+\hat{\vect{e}}_{\parallel'}^{T}\tilde{\mathcal{H}}\hat{\vect{e}}_{\parallel'}
\leq-\frac{\sqrt{\rho\epsilon}}{2}\|\hat{\vect{e}}_{\parallel'}\|^2+\frac{\rho\epsilon}{64\ell^2}\leq-\frac{\sqrt{\rho\epsilon}}{4}.
\end{align}
\end{proof}

\subsection{Proof details of escaping saddle points using \algo{SNC-finding}}\label{append:PSGD+SNC}
In this subsection, we demonstrate that \algo{SNC-finding} can be used to escape from saddle points in the stochastic setting. We first present the explicit \algo{SGD+NC}, and then introduce the full version \thm{PSGD+SNC-Complexity} with proof.
\begin{algorithm}[htbp]
\caption{Stochastic Gradient Descent with Negative Curvature Finding.}
\label{algo:SGD+NC}
\textbf{Input:} $\vect{x}_{0}\in\mathbb{R}^n$\;
\For{$t=0,1,...,T$}{
Sample $\left\{\theta^{(1)},\theta^{(2)},\cdots,\theta^{(M)}\right\}\sim\mathcal{D}$\;
$\vect{g}(\vect{x}_{t})=\frac{1}{M}\sum_{j=1}^M\vect{g}(\vect{x}_t;\theta^{(j)})$\;
\If{$\|\vect{g}(\vect{x}_{t})\|\leq3\epsilon/4$}{
$\hat{\vect{e}}\leftarrow$StochasticNegativeCurvatureFinding($\vect{x}_t,r_s,\mathscr{T}_s,m$)\;
$\vect{x}_{t}\leftarrow \vect{x}_{t}-\frac{f'_{\hat{\vect{e}}}(\vect{x}_0)}{4|f'_{\hat{\vect{e}}}(\vect{x}_0)|}\sqrt{\frac{\epsilon}{\rho}}\cdot\hat{\vect{e}}$\;
Sample $\left\{\theta^{(1)},\theta^{(2)},\cdots,\theta^{(M)}\right\}\sim\mathcal{D}$\;
$\vect{g}(\vect{x}_{t})=\frac{1}{M}\sum_{j=1}^M\vect{g}(\vect{x}_t;\theta^{(j)})$\;
}
$\vect{x}_{t+1}\leftarrow\vect{x}_{t}-\frac{1}{\ell}\vect{g}(\vect{x}_t;\theta_t)$\;
}
\end{algorithm}
\begin{theorem}[Full version of \thm{PSGD+SNC-Complexity}]\label{thm:PSGD+SNC-full}
Suppose that the function $f$ is $\ell$-smooth and $\rho$-Hessian Lipschitz. For any $\epsilon>0$ and a constant $0<\delta_s\leq 1$, we choose the parameters appearing in \algo{SGD+NC} as
\begin{align}\label{eqn:stochastic-parameter-choice-thm}
\delta &= \frac{\delta_s}{2304\Delta_f}\sqrt{\frac{\epsilon^3}{\rho}}
&\mathscr{T}_s&=\frac{8\ell}{\sqrt{\rho\epsilon}}\cdot\log\Big(\frac{\ell n}{\delta\sqrt{\rho\epsilon}}\Big),
&\iota &=10\log\Big(\frac{n\mathscr{T}_s^2}{\delta}\log\Big(\frac{\sqrt{n}}{\eta r_s}\Big)\Big),\\
r_s&=\frac{\delta}{480\rho n\mathscr{T}_s}\sqrt{\frac{\rho\epsilon}{\iota}},& m&=\frac{160(\ell+\tilde{\ell})}{\delta\sqrt{\rho\epsilon}}\sqrt{\mathscr{T}_s\iota},
&M&=\frac{16\ell\Delta_f}{\epsilon^{2}}
\end{align}
where $\Delta_f:=f(\vect{x}_0)-f^{*}$ and $f^{*}$ is the global minimum of $f$. Then, \algo{SGD+NC} satisfies that at least $1/4$ of the iterations $\vect{x}_{t}$ will be $\epsilon$-approximate second-order stationary points, using
\begin{align}
\tilde{O}\Big(\frac{(f(\vect{x}_{0})-f^{*})}{\epsilon^{4}}\cdot\log^2 n\Big)
\end{align}
iterations, with probability at least $1-\delta_s$.
\end{theorem}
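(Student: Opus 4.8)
The plan is to run the same three–way accounting used in the proof of \thm{PGD+NC-Complexity} in \append{PGD+NC}, but now carried out in a high–probability sense because of the stochastic gradients. I would partition the $T$ outer iterations of \algo{SGD+NC} into: (i) iterations at which the mini-batch gradient test $\|\vect g(\vect x_t)\|\le 3\epsilon/4$ fires at a genuine saddle, so that \algo{SNC-finding} together with the directed perturbation of \lem{utilize-NC} decreases $f$ by at least $\tfrac{1}{384}\sqrt{\epsilon^3/\rho}$; (ii) ``large-gradient'' iterations with $\|\vect g(\vect x_t)\|>3\epsilon/4$, where an ordinary stochastic descent step is taken; and (iii) the remaining iterations, at which $\|\vect g(\vect x_t)\|\le 3\epsilon/4$ but the Hessian is almost positive semidefinite, i.e.\ the $\epsilon$-approximate second-order stationary points. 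The goal is to show that group (i) has size at most $T/2$ and group (ii) at most $T/4$, so that group (iii) has size at least $T/4$.

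First I would fix $T$ as the larger of two thresholds: one making $T/4$ large-gradient steps already account for more than $\Delta_f=f(\vect x_0)-f^*$ of function decrease, and one equal to $768\,\Delta_f\,\mathscr{T}_s\sqrt{\rho/\epsilon^3}$, so that — since \lem{utilize-NC} caps the number of productive \algo{SNC-finding} invocations at $384\,\Delta_f\sqrt{\rho/\epsilon^3}$ — the total length of the $\mathscr{T}_s$-step windows occupied by \algo{SNC-finding} is at most $T/2$. To control group (i) I would invoke \prop{SNC-finding} at each of these at most $384\,\Delta_f\sqrt{\rho/\epsilon^3}$ invocations: each returns a unit $\hat{\vect e}$ with $\hat{\vect e}^{T}\mathcal H(\tilde{\vect x})\hat{\vect e}\le-\sqrt{\rho\epsilon}/4$ with probability at least $1-3\delta$, and the choice $\delta=\tfrac{\delta_s}{2304\Delta_f}\sqrt{\epsilon^3/\rho}$ makes the union bound over all invocations cost exactly $\delta_s/2$; on that event \lem{utilize-NC} turns every saddle encounter into the claimed decrease, validating the budget.

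Next I would handle groups (ii) and (iii) together. Concentration of the size-$M$ mini-batch (the norm-subGaussian tail of \assum{stochastic-variance}, amplified over $M$ samples) gives $\|\vect g(\vect x_t)-\nabla f(\vect x_t)\|\le\epsilon/4$ simultaneously for all $t\le T$ with probability at least $1-\delta_s/4$, once $M=\tilde O(1/\epsilon^2)$; hence on the ``not a saddle'' iterations in group (iii) we indeed have $\|\nabla f(\vect x_t)\|\le\epsilon$ together with $\lambda_{\min}(\mathcal H(\vect x_t))>-\sqrt{\rho\epsilon}$, while on the large-gradient iterations $\|\nabla f(\vect x_t)\|>\epsilon/2$. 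Applying \lem{stochastic-descent-lemma} window-by-window between consecutive \algo{SNC-finding} calls — with the averaged gradient, whose sub-Gaussian parameter is $\sigma/\sqrt M$, so that the residual noise term $c\sigma^2(t+\iota)/\ell$ is dominated by the $\Omega(t\epsilon^2/\ell)$ progress, and with $\iota=10\log(\cdots)$ large enough to make the total failure probability at most $\delta_s/4$ — shows that more than $T/4$ large-gradient steps would force $f$ below $f^*$, a contradiction. Combining the three error budgets, with probability at least $1-\delta_s$ at least $T/4$ of the iterations are $\epsilon$-approximate second-order stationary points; an explicit termination test (as in the remark after \thm{PGD+NC-Complexity}) identifies one of them.

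Finally I would collect the cost exactly as in the deterministic proof, with two modifications: every descent step now costs a mini-batch of $M=\tilde O(1/\epsilon^2)$ queries, turning the $\tilde O(\Delta_f/\epsilon^{2})$ descent steps (the usual SGD rate for reaching $\|\nabla f\|\lesssim\epsilon$) into $T_1=\tilde O(\Delta_f/\epsilon^{4})$ queries; and each of the $\tilde O(\Delta_f/\epsilon^{1.5})$ \algo{SNC-finding} calls costs $O(m\mathscr{T}_s)=\tilde O(\log^2 n/\epsilon^{1/2})$ queries by \prop{SNC-finding} (the $\log^2 n$, versus the $\log n$ of \prop{NC-finding}, being the only genuinely new source of $\log$-factors), giving $T_2=\tilde O(\Delta_f\log^2 n/\epsilon^{2})$; summing $T_1+T_2$ yields the stated $\tilde O\!\big((f(\vect x_0)-f^*)\log^2 n/\epsilon^{4}\big)$. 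I expect the real obstacle to be the stochastic bookkeeping of group (ii): one must show the martingale noise accumulated over the $\tilde O(1/\epsilon^4)$ descent steps never overwhelms the $\Omega(\epsilon^2/\ell)$ per-step progress, and simultaneously keep $\delta$ small enough that all $\tilde O(1/\epsilon^{1.5})$ independent calls to \algo{SNC-finding} succeed — it is precisely this coupling that forces the scalings of $M$, $m$, $r_s$ and $\iota$ in \eqn{stochastic-parameter-choice-thm}, and threading it together (rather than any individual estimate, which is routine given \prop{SNC-finding} and \lem{stochastic-descent-lemma}) is the delicate part.
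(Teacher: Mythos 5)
Your proposal reproduces the paper's own argument: the same three-way split into SNC-finding calls, large-gradient steps, and stationary points; the same budget of at most $384\Delta_f\sqrt{\rho/\epsilon^3}$ calls to \algo{SNC-finding} (so that $\delta$ is tuned to make the union-bound failure probability exactly $\delta_s/2$); the same use of \lem{stochastic-descent-lemma} to show more than $T/4$ large-gradient steps would force $f$ below $f^*$; and the same $T_1+T_2$ gradient-query count. Two small bookkeeping corrections, neither of which changes the final bound: first, in \algo{SGD+NC} each call to \algo{SNC-finding} occupies a single outer iteration $t$ (its inner loop is a subroutine), so the paper sets $T=\max\{8\ell\Delta_f/\epsilon^2,\ 768\Delta_f\sqrt{\rho/\epsilon^3}\}$ with no extra $\mathscr{T}_s$ factor as you wrote; second, the per-call cost from \prop{SNC-finding} is $\tilde{O}\big(\log^2 n/(\delta\sqrt{\rho\epsilon})\big)$, carrying a $1/\delta\propto\Delta_f\epsilon^{-1.5}$ factor that you dropped, so $T_2=\tilde{O}\big(\Delta_f^2\sqrt{\rho}\log^2 n/(\delta_s\epsilon^{3.5})\big)$ rather than your $\tilde{O}(\Delta_f\log^2 n/\epsilon^2)$. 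Both are still absorbed into the stated $\tilde{O}\big((f(\vect{x}_0)-f^*)\log^2 n/\epsilon^4\big)$ since $T_1=O(MT)$ remains the dominant term in $\epsilon$. Your explicit mini-batch concentration step and the window-by-window application of \lem{stochastic-descent-lemma} between consecutive SNC-finding calls are in fact slightly more careful than the paper's treatment, which applies the lemma over the full run of $T$ steps without explicitly handling the breaks introduced by the SNC-finding calls.
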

\begin{proof}
Let the parameters be chosen according to \eqn{parameter-choice}, and set the total step number $T$ to be:
\begin{align}
T=\max\left\{\frac{8\ell(f(\vect{x_{0}})-f^{*})}{\epsilon^{2}},768(f(\vect{x_{0}})-f^{*})\cdot\sqrt{\frac{\rho}{\epsilon^3}}\right\}.
\end{align}
We will show that the following two claims hold simultaneously with probability $1-\delta_s$:
\begin{enumerate}
\item At most $T/4$ steps have gradients larger than $\epsilon$;
\item \algo{SNC-finding} can be called for at most $384\Delta_f\sqrt{\frac{\rho}{\epsilon^3}}$ times.
\end{enumerate}
Therefore, at least $T/4$ steps are $\epsilon$-approximate secondary stationary points. We prove the two claims separately.
\paragraph{Claim 1.}Suppose that within $T$ steps, we have more than $T/4$ steps with gradients larger than $\epsilon$. Then with probability $1-\delta_s/2$,
\begin{align}
f(\vect{x}_T)-f(\vect{x}_0)\leq -\frac{\eta}{8}\sum_{i=0}^{T-1}\|\nabla f(\vect{x}_i)\|^2+c\cdot\frac{\sigma^2}{M\ell}(T+\log(1/\delta_s))\leq f^{*}-f(\vect{x}_0),
\end{align}
contradiction.
\paragraph{Claim 2.}
We first assume that for each $\vect{x}_t$ we apply negative curvature finding (\algo{SNC-finding}), we can successfully obtain a unit vector $\hat{\vect{e}}$ with $\hat{\vect{e}}^{T}\mathcal{H}(\vect{x}_t)\hat{\vect{e}}\leq-\sqrt{\rho\epsilon}/4$, as long as $\lambda_{\min}(\mathcal{H}(\vect{x}_t))\leq-\sqrt{\rho\epsilon}$. The error probability of this assumption is provided later.

Under this assumption, \algo{SNC-finding} can be called for at most $384(f(\vect{x_{0}})-f^{*})\sqrt{\frac{\rho}{\epsilon^3}}\leq \frac{T}{2}$ times, for otherwise the function value decrease will be greater than $f(\vect{x_{0}})-f^{*}$, which is not possible. Then, the error probability that some calls to \algo{SNC-finding} fails is upper bounded by
\begin{align}
384(f(\vect{x_{0}})-f^{*})\sqrt{\frac{\rho}{\epsilon^3}}\cdot(3\delta)=\delta_s/2.
\end{align}

The number of iterations can be viewed as the sum of two parts, the number of iterations needed in large gradient scenario, denoted by $T_{1}$, and the number of iterations needed for negative curvature finding, denoted by $T_{2}$. With probability at least $1-\delta_s$,
\begin{align}
T_{1}=O(M\cdot T)=\tilde{O}\Big(\frac{(f(\vect{x}_{0})-f^{*})}{\epsilon^{4}}\Big).
\end{align}
As for $T_2$, with probability at least $1-\delta_s$, \algo{SNC-finding} is called for at most $384(f(\vect{x_{0}})-f^{*})\sqrt{\frac{\rho}{\epsilon^3}}$ times, and by \prop{SNC-finding} it takes $\tilde{O}\Big(\frac{\log^2 n}{\delta\sqrt{\rho\epsilon}}\Big)$ iterations each time. Hence,
\begin{align}
T_2=384(f(\vect{x_{0}})-f^{*})\sqrt{\frac{\rho}{\epsilon^3}}\cdot \tilde{O}\Big(\frac{\log^2 n}{\delta\sqrt{\rho\epsilon}}\Big)=\tilde{O}\Big(\frac{(f(\vect{x}_{0})-f^{*})}{\epsilon^{4}}\cdot\log^2 n\Big).
\end{align}
As a result, the total iteration number $T_1+T_2$ is
\begin{align}
\tilde{O}\Big(\frac{(f(\vect{x}_{0})-f^{*})}{\epsilon^{4}}\cdot\log^2 n\Big).
\end{align}
\end{proof}


\section{More numerical experiments}\label{append:more-numerics}
In this section, we present more numerical experiment results that support our theoretical claims from a few different perspectives compared to \sec{numerical}. Specifically, considering that previous experiments all lies in a two-dimensional space, and theoretically our algorithms have a better dependence on the dimension of the problem $n$, it is reasonable to check the actual performance of our algorithm on high-dimensional test functions, which is presented in \append{dimension-dependence}. Then in \append{ANCGD&PAGD}, we introduce experiments on various landscapes that demonstrate the advantage of \algo{ANCGD} over PAGD \cite{jin2018accelerated}. Moreover, we compare the performance of our \algo{ANCGD} with the NEON$^+$ algorithm \cite{xu2017neon} on a few test functions in \append{ANCGD&NEON+}. To be more precise, we compare the negative curvature extracting part of NEON$^+$ with \algo{ANCGD} at saddle points in different types of nonconvex landscapes.


\subsection{Dimension dependence}\label{append:dimension-dependence}
Recall that $n$ is the dimension of the problem. We choose a test function $h(x) = \frac{1}{2}x^T \mathcal{H} x+\frac{1}{16}x_1^4$ where $\mathcal{H}$ is an $n$-by-$n$ diagonal matrix: $\mathcal{H} = \text{diag}(-\epsilon, 1, 1, ..., 1)$. The function $h(x)$ has a saddle point at the origin, and only one negative curvature direction. Throughout the experiment, we set $\epsilon = 1$. For the sake of comparison, the iteration numbers are chosen in a manner such that the statistics of \algo{NC-finding} and PGD in each category of the histogram are of similar magnitude.
\begin{figure}[htbp]
    \centering
    \includegraphics[width=0.8\textwidth]{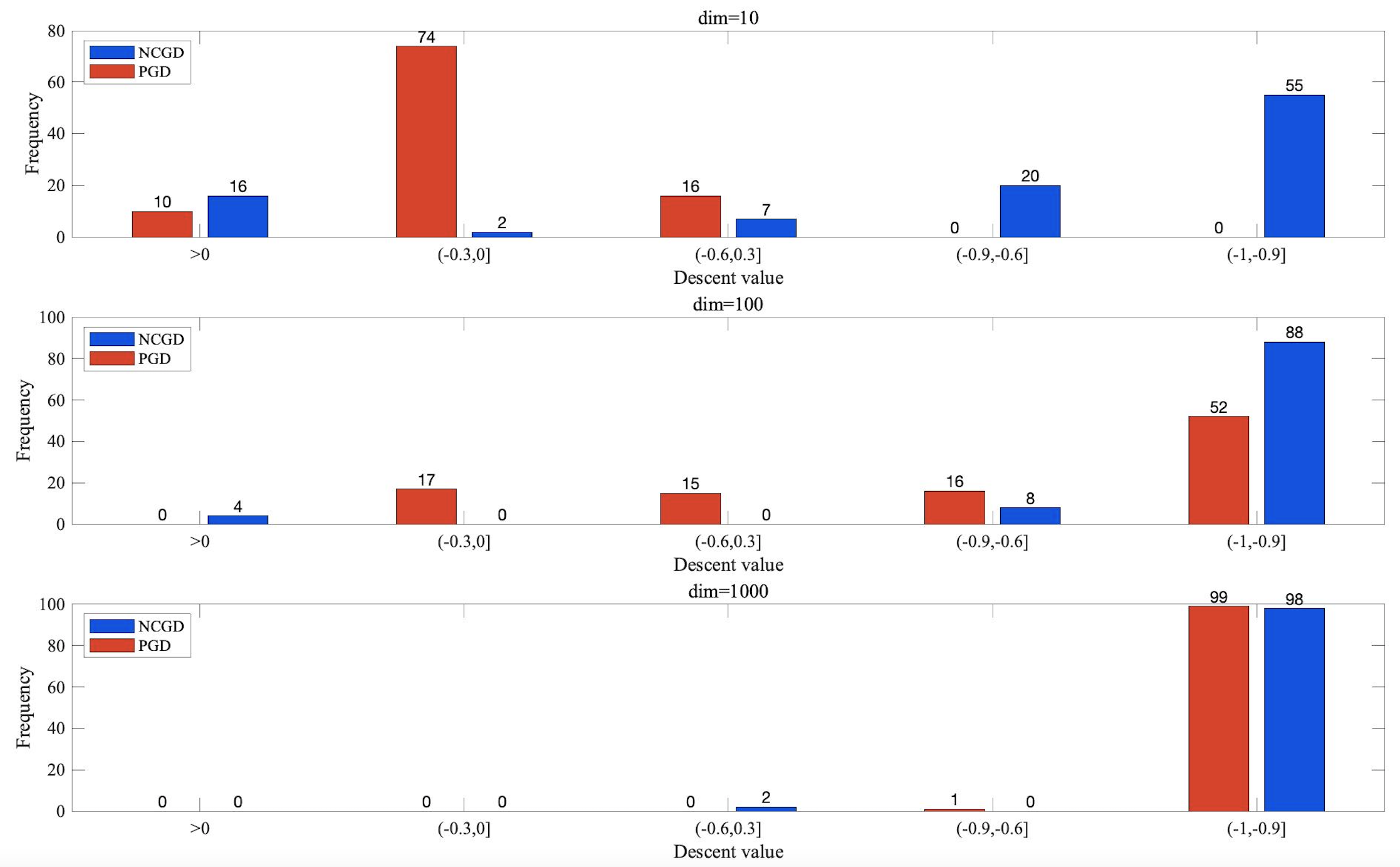}
    \caption{Dimension dependence of \algo{NC-finding} and PGD. We set $\epsilon = 0.01$, $r = 0.1$, $n = 10^p$ for $p = 1, 2, 3$. The iteration number of \algo{NC-finding} and PGD are separately set to be $30p$ and $20p^2+10$, and the sample size $M = 100$. As we can see, to maintain the same performance, the number of iterations in PGD grows faster than the number of iterations in \algo{NC-finding}.}
    \label{fig:dimension-dependence}
\end{figure}


\subsection{Comparison between \algo{ANCGD} and PAGD on various nonconvex landscapes}\label{append:ANCGD&PAGD}

\paragraph{Quartic-type test function}
Consider the test function $f(x_1,x_2)=\frac{1}{16}x_1^4-\frac{1}{2}x_1^2+\frac{9}{8}x_2^2$ with a saddle point at $(0,0)$. The advantage of \algo{ANCGD} is illustrated in \fig{AGD}.

\begin{figure}[H]
\centering\includegraphics[width = 12.2cm]{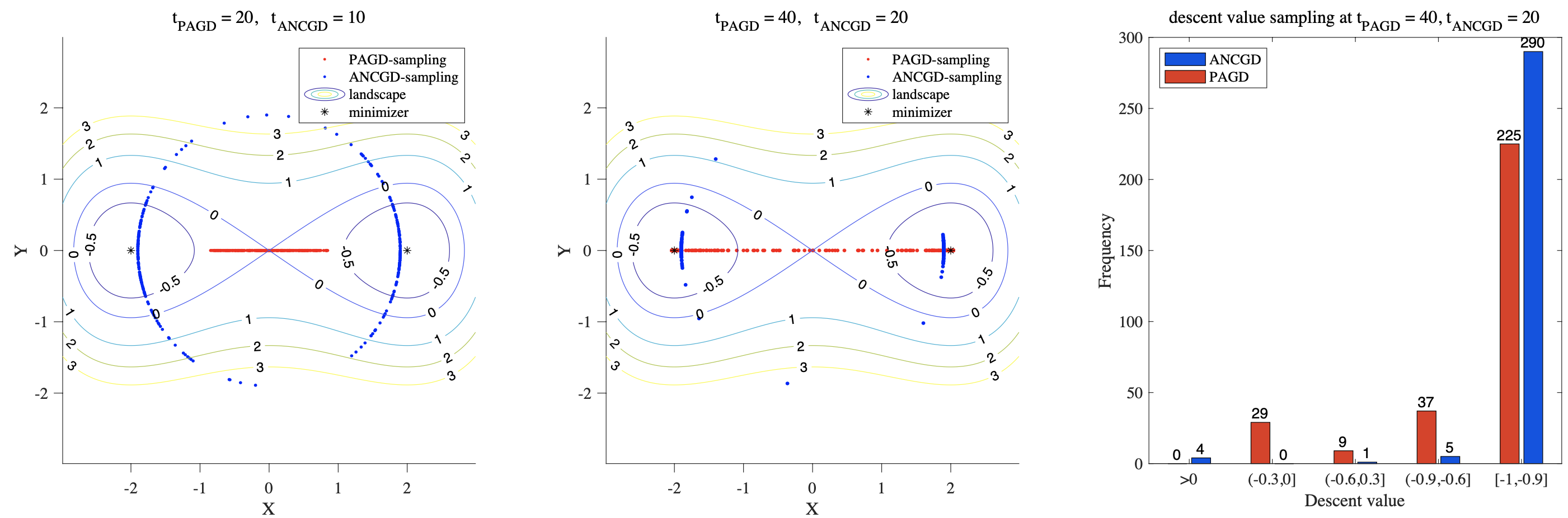}
\caption{Run \algo{ANCGD} and PAGD on landscape $f(x_1,x_2)=\frac{1}{16}x_1^4-\frac{1}{2}x_1^2+\frac{9}{8}x_2^2$. Parameters: $\eta =0.05$ (step length), $r = 0.08$ (ball radius in PAGD and parameter $r$ in \algo{ANCGD}), $M = 300$ (number of samplings).\\
\textbf{Left}: The contour of the landscape is placed on the background with labels being function values. Blue points represent samplings of \algo{ANCGD} at time step $t_{\text{ANCGD}}=10$ and $t_{\text{ANCGD}}=20$, and red points represent samplings of PAGD at time step $t_{\text{PAGD}}=20$ and $t_{\text{PAGD}}=40$. Similarly to \algo{NC-finding}, \algo{ANCGD} transforms an initial uniform-circle distribution into a distribution concentrating on two points indicating negative curvature, and these two figures represent intermediate states of this process. It converges faster than PAGD even when $t_{\text{ANCGD}}\ll t_{\text{PAGD}}$.\\
\textbf{Right}: A histogram of descent values obtained by \algo{ANCGD} and PAGD, respectively. Set $t_{\text{ANCGD}} = 20$ and $t_{\text{PAGD}} = 40$. Although we run two times of iterations in PAGD, there are still over $20\%$ of PAGD paths with function value decrease no greater than $0.9$, while this ratio for \algo{ANCGD} is less than $5\%$.}
\label{fig:AGD}
\end{figure}

\paragraph{Triangle-type test function.}
Consider the test function $f(x_1,x_2)=\frac{1}{2}\cos(\pi x_1)+\frac{1}{2}\Big(x_2+\frac{\cos(2\pi x_1)-1}{2}\Big)^2-\frac{1}{2}$ with a saddle point at $(0,0)$. The advantage of \algo{ANCGD} is illustrated in \fig{AGD-triangle}.

\begin{figure}[htbp]
\centering\includegraphics[width = 12.2cm]{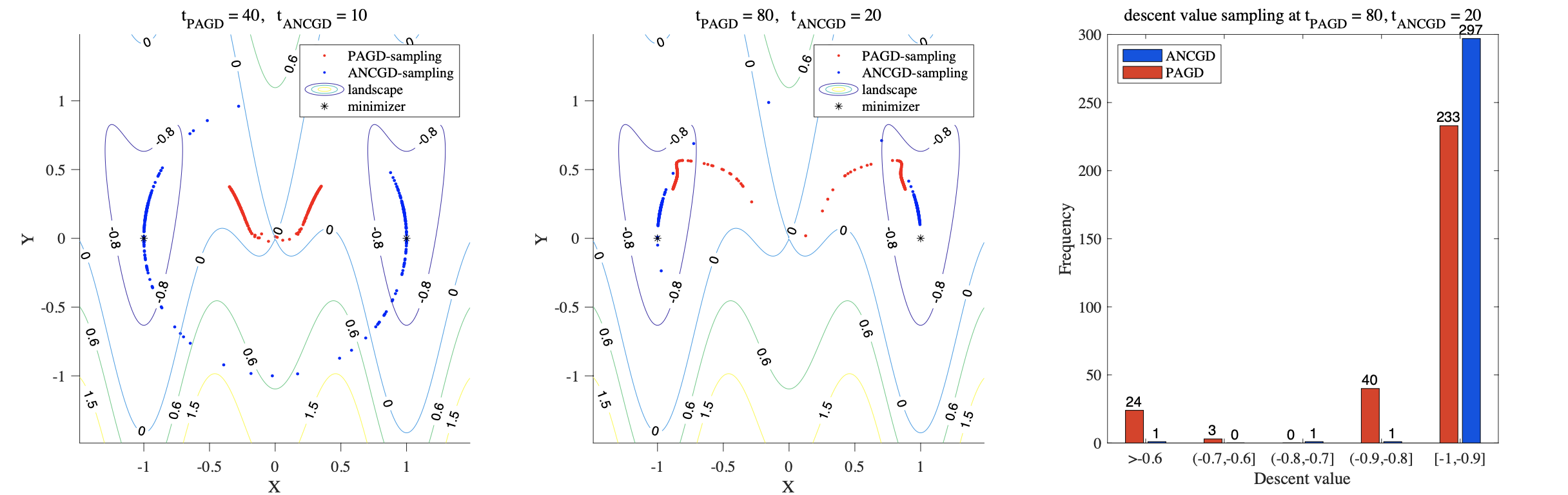}
\caption{Run \algo{ANCGD} and PAGD on landscape $f(x_1,x_2)=\frac{1}{2}\cos(\pi x_1)+\frac{1}{2}\Big(x_2+\frac{\cos(2\pi x_1)-1}{2}\Big)^2-\frac{1}{2}$. Parameters: $\eta =0.01$ (step length), $r = 0.1$ (ball radius in PAGD and parameter $r$ in \algo{ANCGD}), $M = 300$ (number of samplings).\\
\textbf{Left}: The contour of the landscape is placed on the background with labels being function values. Blue points represent samplings of \algo{ANCGD} at time step $t_{\text{ANCGD}}=10$ and $t_{\text{ANCGD}}=20$, and red points represent samplings of PAGD at time step $t_{\text{PAGD}}=40$ and $t_{\text{PAGD}}=80$. \algo{ANCGD} converges faster than PAGD even when $t_{\text{ANCGD}}\ll t_{\text{PAGD}}$.\\
\textbf{Right}: A histogram of descent values obtained by \algo{ANCGD} and PAGD, respectively. Set $t_{\text{ANCGD}} = 20$ and $t_{\text{PAGD}} = 80$. Although we run four times of iterations in PAGD, there are still over $20\%$ of gradient descent paths with function value decrease no greater than $0.9$, while this ratio for \algo{ANCGD} is less than $5\%$.}
\label{fig:AGD-triangle}
\end{figure}

\paragraph{Exponential-type test function.}
Consider the test function $f(x_1,x_2)=\frac{1}{1+e^{x_1^2}}+\frac{1}{2}\big(x_2-x_1^2e^{-x_1^2}\big)^2-1$ with a saddle point at $(0,0)$. The advantage of \algo{ANCGD} is illustrated in \fig{AGD-exp}.

\begin{figure}[htbp]
\centering\includegraphics[width = 12.2cm]{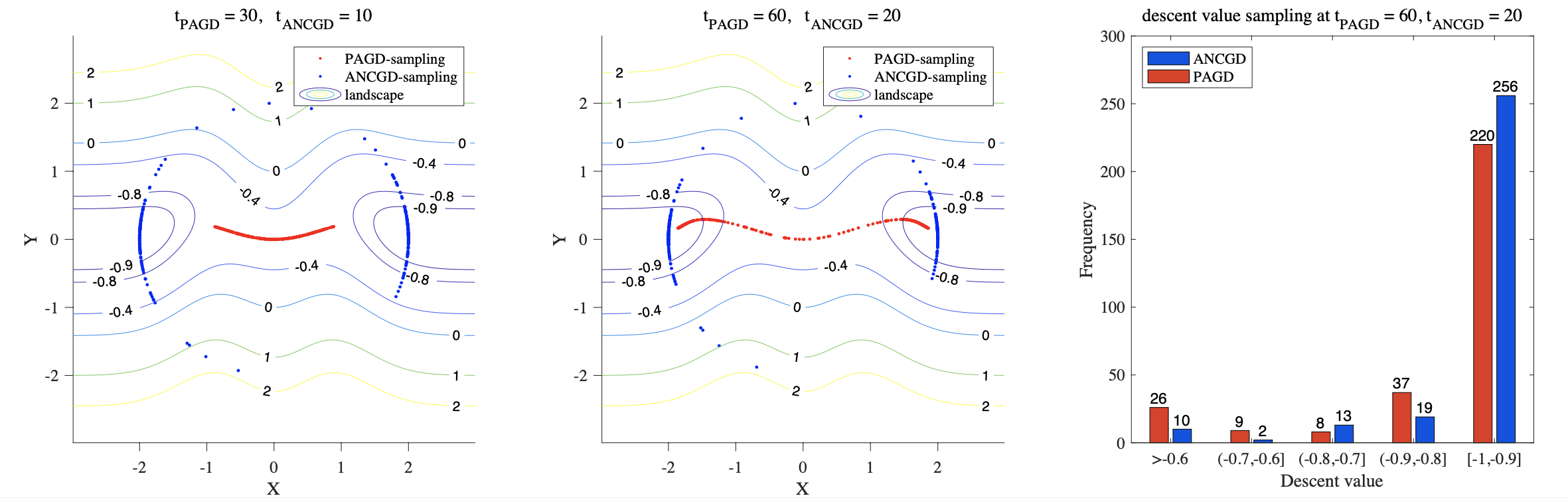}
\caption{Run \algo{ANCGD} and PAGD on landscape $f(x_1,x_2)=f(x_1,x_2)=\frac{1}{1+e^{x_1^2}}+\frac{1}{2}\big(x_2-x_1^2e^{-x_1^2}\big)^2-1$. Parameters: $\eta =0.03$ (step length), $r = 0.1$ (ball radius in PAGD and parameter $r$ in \algo{ANCGD}), $M = 300$ (number of samplings).\\
\textbf{Left}: The contour of the landscape is placed on the background with labels being function values. Blue points represent samplings of \algo{ANCGD} at time step $t_{\text{ANCGD}}=10$ and $t_{\text{ANCGD}}=20$, and red points represent samplings of PAGD at time step $t_{\text{PAGD}}=30$ and $t_{\text{PAGD}}=60$. \algo{ANCGD}converges faster than PAGD even when $t_{\text{ANCGD}}\ll t_{\text{PAGD}}$.\\
\textbf{Right}: A histogram of descent values obtained by \algo{ANCGD} and PAGD, respectively. Set $t_{\text{ANCGD}} = 20$ and $t_{\text{PAGD}} = 60$. Although we run three times of iterations in PAGD, its performance is still dominated by our \algo{ANCGD}.}
\label{fig:AGD-exp}
\end{figure}
Compared to the previous experiment on \algo{NC-finding} and PGD shown as \fig{quartic_descent} in \sec{numerical}, these experiments also demonstrate the faster convergence rates enjoyed by the general family of "momentum methods". Specifically, using fewer iterations, \algo{ANCGD} and PAGD achieve larger function value decreases separately compared to \algo{NC-finding} and PGD.


\subsection{Comparison between \algo{ANCGD} and NEON$^{+}$ on various nonconvex landscapes}\label{append:ANCGD&NEON+}
\paragraph{Triangle-type test function.}
Consider the test function $f(x_1,x_2)=\frac{1}{2}\cos(\pi x_1)+\frac{1}{2}\Big(x_2+\frac{\cos(2\pi x_1)-1}{2}\Big)^2-\frac{1}{2}$ with a saddle point at $(0,0)$. The advantage of \algo{ANCGD} is illustrated in \fig{neon-cosine}.

\begin{figure}[htbp]
\centering\includegraphics[width = 12.2cm]{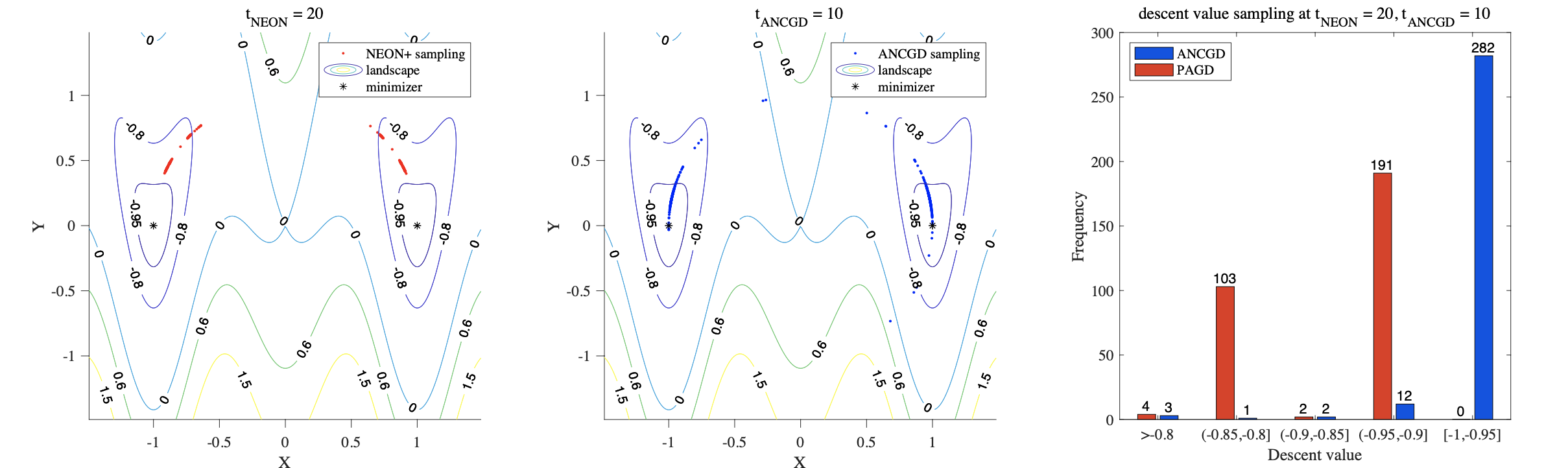}
\caption{Run \algo{ANCGD} and NEON$^+$ on landscape $f(x_1,x_2)=\frac{1}{2}\cos(\pi x_1)+\frac{1}{2}\big(x_2+\frac{\cos(2\pi x_1)-1}{2}\big)^2-\frac{1}{2}$. Parameters: $\eta =0.04$ (step length), $r = 0.1$ (ball radius in NEON$^+$ and parameter $r$ in \algo{ANCGD}), $M = 300$ (number of samplings).\\
\textbf{Left}: The contour of the landscape is placed on the background with labels being function values. Red points represent samplings of NEON$^+$ at time step $t_{\text{NEON}}=20$, and blue points represent samplings of \algo{ANCGD} at time step $t_{\text{ANCGD}}=10$. \algo{ANCGD} and the negative curvature extracting part of NEON$^+$ both transform an initial uniform-circle distribution into a distribution concentrating on two points indicating negative curvature. Note that \algo{ANCGD} converges faster than NEON$^+$ even when $t_{\text{ANCGD}}\ll t_{\text{NEON}}$.\\
\textbf{Right}: A histogram of descent values obtained by \algo{ANCGD} and NEON$^+$, respectively. Set $t_{\text{ANCGD}} = 10$ and $t_{\text{NEON}} = 20$. Although we run two times of iterations in NEON$^+$, none of NEON$^+$ paths has function value decrease greater than $0.95$, while this ratio for \algo{ANCGD} is larger than $90\%$.}
\label{fig:neon-cosine}
\end{figure}

\paragraph{Exponential-type test function.}
Consider the test function $f(x_1,x_2)=\frac{1}{1+e^{x_1^2}}+\frac{1}{2}\big(x_2-x_1^2e^{-x_1^2}\big)^2-1$ with a saddle point at $(0,0)$. The advantage of \algo{ANCGD} is illustrated in \fig{neon-exp}

\begin{figure}[htbp]
\centering\includegraphics[width = 12.2cm]{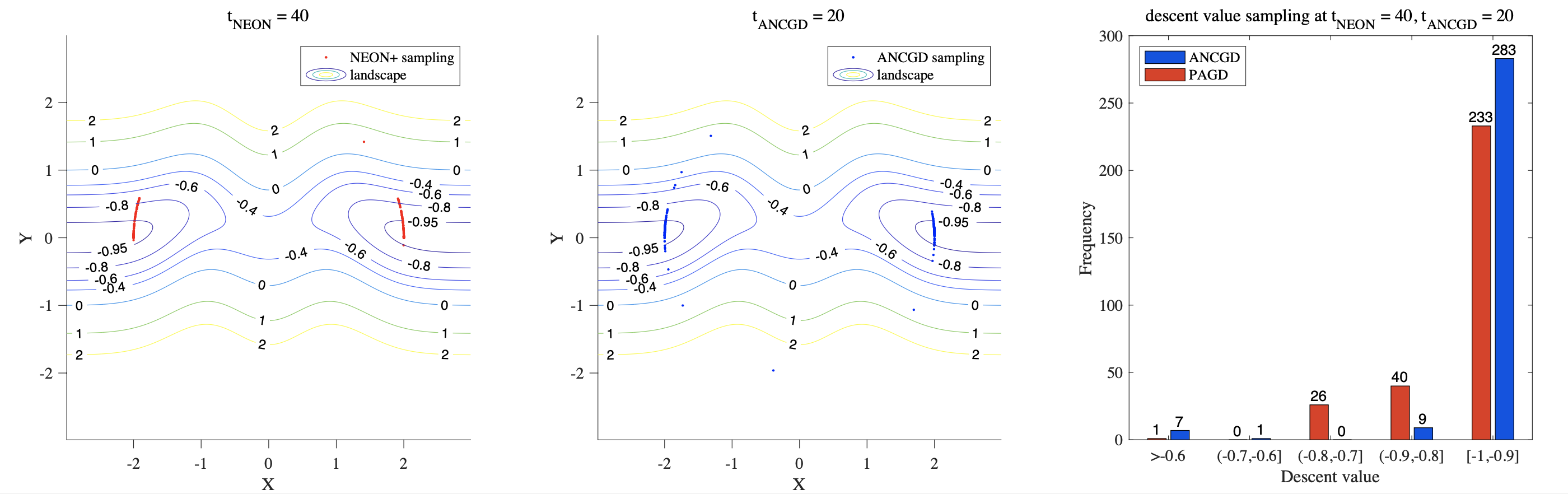}
\caption{Run \algo{ANCGD} and NEON$^+$ on landscape $f(x_1,x_2)=\frac{1}{1+e^{x_1^2}}+\frac{1}{2}\big(x_2-x_1^2e^{-x_1^2}\big)^2-1$. Parameters: $\eta =0.03$ (step length), $r = 0.1$ (ball radius in NEON$^+$ and parameter $r$ in \algo{ANCGD}), $M = 300$ (number of samplings).\\
\textbf{Left}: The contour of the landscape is placed on the background with labels being function values. Red points represent samplings of NEON$^+$ at time step $t_{\text{NEON}}=40$, and blue points represent samplings of \algo{ANCGD} at time step $t_{\text{ANCGD}}=20$. \algo{ANCGD} converges faster than NEON$^+$ even when $t_{\text{ANCGD}}\ll t_{\text{NEON}}$.\\
\textbf{Right}: A histogram of descent values obtained by \algo{ANCGD} and NEON$^+$, respectively. Set $t_{\text{ANCGD}} = 20$ and $t_{\text{NEON}} = 40$. Although we run two times of iterations in NEON$^+$, there are still over $20\%$ of NEON$^+$ paths with function value decrease no greater than $0.9$, while this ratio for \algo{ANCGD} is less than $10\%$.}
\label{fig:neon-exp}
\end{figure}

Compared to the previous experiments on \algo{ANCGD} and PAGD in \append{ANCGD&PAGD}, these two experiments also reveal the faster convergence rate of both NEON$^+$ and \algo{ANCGD} against PAGD \cite{jin2018accelerated} at small gradient regions.


\end{document}